%last change by CL on 4 Sept  2011
%last change by DK on 18 July 2011

\documentclass[12pt,a4paper]{amsart}
\usepackage[DIV10, headinclude, BCOR 10mm]{typearea}
\usepackage{latexsym}
\usepackage{amssymb}
\usepackage{amscd}
\usepackage{graphicx}
\usepackage{paralist} 
\usepackage{array}
\usepackage{tabularx}
\usepackage{url}
%\usepackage{backref}

% comments
\usepackage{color}
\usepackage{pdfcolmk}

\newtheorem{thm}{Theorem}[section]
\newtheorem{prop}[thm]{Proposition}
\newtheorem{lem}[thm]{Lemma}
\newtheorem{cor}[thm]{Corollary}

\theoremstyle{remark}
\newtheorem{rem}[thm]{Remark}
\newtheorem{ex}[thm]{Example}

\theoremstyle{definition}
\newtheorem{defn}[thm]{Definition}

\DeclareMathOperator{\Aut}{Aut}
\DeclareMathOperator{\Out}{Out}
\DeclareMathOperator{\Inn}{Inn}
\DeclareMathOperator{\PSL}{PSL}
\DeclareMathOperator{\SL}{SL}
\DeclareMathOperator{\GL}{GL}
\DeclareMathOperator{\Sp}{Sp}

\DeclareMathOperator{\Sol}{Sol}
\DeclareMathOperator{\IA}{IA}
\DeclareMathOperator{\RG}{RG}

\newcommand{\CC}{\mathcal{ C}}
\newcommand{\Cr}{C^*_{\text{\normalfont red}}}
\newcommand{\C}{\mathbb{ C}}
\newcommand{\N}{\mathbb{ N}}

\newcommand{\R}{\mathbb{ R}}
\newcommand{\Z}{\mathbb{ Z}}
\newcommand{\bN}{\N}

\newcommand{\Hyp}{\mathbb{ H}}
\newcommand{\Creg}{\mathcal C_{\text{\normalfont reg}}}
\newcommand{\Dreg}{\mathcal D_{\text{\normalfont reg}}}

% new commands for CAT(0)-part
\DeclareMathOperator{\CAT}{CAT}
\newcommand{\Catz}{\ensuremath{\CAT(0)}}

\makeatletter
\newcommand\norm{\bBigg@{0.8}}

\makeatother
\newcommand{\inparens}[2][flex]{\csname #1l\endcsname(#2%
  \csname #1r\endcsname)\mathclose{}}
\newcommand{\inangles}[2][flex]{\csname #1l\endcsname\langle#2%
  \csname #1r\endcsname\rangle\mathclose{}} 

\DeclareMathOperator{\im}{im}
  
\newcommand{\sv}[2][flex]{\csname #1l\endcsname\|#2%
  \csname #1r\endcsname\|} % simplicial volume 
\newcommand{\lone}[2][flex]{\csname #1l\endcsname\|#2%
  \csname #1r\endcsname\|_1} % l^1-norm
\newcommand{\supn}[2][flex]{\csname #1l\endcsname\|#2%
  \csname #1r\endcsname\|_{\infty}} % l^\infty-norm

\newcommand{\ltb}[3][flex]{b^{(2)}_{#2}\inparens[#1]{#3}}

\title[Groups not presentable by products]
      {Groups not presentable by products}
\author{D.~Kotschick}
\address{Mathematisches Institut, {\smaller LMU} M\"unchen,
Theresienstr.~39, 80333~M\"unchen, Germany}
\email{dieter@member.ams.org}
\author{C.~L\"oh}
\address{Fakult\"at f\"ur Mathematik, Universit\"at Regensburg, 93040 Regensburg, Germany}
%\address{Mathematisches Institut, {\smaller WWU} M\"unster,
%  Einsteinstr.~62, 48149~M\"unster, Germany}
%\curraddr{Mathematisches Institut, {\smaller GAU} G\"ottingen, 
%  Bunsenstr.~3--5, 37073~G\"ot\-tin\-gen, Germany}
\email{clara.loeh@mathematik.uni-regensburg.de}
\date{\today; \copyright{\ D.~Kotschick and C.~L\"oh 2009--2011}}
\subjclass[2000]{primary 20F65; secondary 57M07, 20E06, 20E34, 20E36}
\thanks{The first author is grateful to the Institute for Advanced Study in Princeton,
respectively the Mathematical Sciences Research Institute in Berkeley, for hospitality at
the beginning, respectively at the end, of the work on this paper. 
The second author is grateful for the support of the Hausdorff Research Institute for Mathematics in Bonn
during the {\smaller HIM} Trimester Program ``Rigidity.''}

% adapt to latex/pdflatex
  \usepackage{ifpdf}
  \ifpdf% => pdf output
    \usepackage[pdftex,colorlinks,linkcolor=blue,citecolor=blue,urlcolor=blue,a4paper,hypertexnames=true,plainpages=false]{hyperref}
    \hypersetup{pdfauthor={D.~Kotschick, C.~L\"oh},
                pdftitle ={Groups not presentable by products},
                }
    \usepackage[all]{xy}
    \SelectTips{cm}{}
  \else% => dvi/ps output
    \usepackage[colorlinks,linkcolor=blue,citecolor=blue,urlcolor=blue,a4paper,hypertexnames=true,plainpages=false]{hyperref}
    \usepackage[all, dvips]{xy}
    \SelectTips{cm}{}
  \fi

\begin{document}

\begin{abstract}
  In this paper we study obstructions to presentability by products
  for finitely generated groups. Along the way we develop both the
  concept of acentral subgroups, and the relations between
  presentability by products on the one hand, and certain geometric
  and measure or orbit equivalence invariants of groups on the
  other. This leads to many new examples of groups not presentable by
  products, including all groups with infinitely many ends, the
  (outer) automorphism groups of free groups, Thompson's groups, and
  even some elementary amenable groups.
\end{abstract}

\maketitle

%\tableofcontents

%%%%%%%%%%%%%%%%%%%%%%%%%%%%%%%%%%%%%%%%%%%%%%%%%%%%%%%%%%%%%%%%%%%
\section{Introduction}\label{s:intro}

In our previous paper~\cite{KL} we introduced a class of infinite
groups, called groups not presentable by products.  Our motivation 
was that certain closed manifolds whose fundamental groups belong
to this class turned out to have special properties; in particular
some such manifolds are not dominated by non-trivial product manifolds.  The
purpose of the present paper is to discuss groups not presentable by products
more systematically, and, in particular, to provide further examples of such groups, 
going far beyond the examples given in~\cite{KL}. 
First, we recall the definition.
 \begin{defn}\label{repgroupsdef}(\cite{KL})
    An infinite group $\Gamma$ is \emph{not presentable by a product} if, for
    every homomorphism $\varphi \colon \Gamma_1 \times\Gamma_2
    \longrightarrow\Gamma$ onto a subgroup of finite index, one of the
    factors $\Gamma_i$ has finite image $\varphi
    (\Gamma_i)\subset\Gamma$.
  \end{defn}
For the fundamental groups of closed Riemannian manifolds of strictly negative sectional curvature this property holds,
essentially by the proof of Preissmann's theorem. Generalizing this observation, we previously proved:
\begin{thm}\label{t:alg}{\rm (\cite[Theorem~1.5]{KL})}
  The following groups are not presentable by products:
\begin{enumerate}[\normalfont{(MCG)}]
\item[\normalfont{(H)}] hyperbolic groups that are not virtually
  cyclic,
\item[\normalfont{(N-P)}] fundamental groups of closed Riemannian
  manifolds of non-positive sectional curvature of rank one and of
  dimension~$\geq 2$,
\item[\normalfont{(MCG)}] mapping class groups of closed oriented
  surfaces of genus $\geq 1$.
\end{enumerate}
\end{thm}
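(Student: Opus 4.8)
The plan is to handle all three families by one mechanism and to feed in the family-specific geometry only at two points. Suppose $\Gamma$ is from one of the three families and that $\varphi\colon\Gamma_1\times\Gamma_2\to\Gamma$ has image a finite-index subgroup $N\le\Gamma$ with both $A:=\varphi(\Gamma_1)$ and $B:=\varphi(\Gamma_2)$ infinite; I aim to derive a contradiction. Since $\Gamma_1$ and $\Gamma_2$ commute elementwise, $A$ and $B$ are commuting infinite subgroups, each normal in $N$, with $N=A\cdot B$. The key is to locate inside $A$ an element $a$ of \emph{axial} type whose centralizer $C_N(a)$ is virtually cyclic. Granting this, since $B$ centralizes $A$ we get $B\subseteq C_N(a)$, so $B$ is virtually cyclic; choosing an infinite-order $b\in B$, the relation $[a,b]=1$ forces $b$ to share the invariant axis and boundary data of $a$, so that $b$ is again of axial type with $C_N(b)$ virtually cyclic. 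As $A\subseteq C_N(b)$, the group $A$ is virtually cyclic as well, whence $N=AB$, and therefore $\Gamma$ (which contains $N$ as a finite-index subgroup), is virtually cyclic. This is the desired contradiction: the groups in (H) are not virtually cyclic by hypothesis, while in (N-P) and (MCG) the group $\Gamma$ contains a nonabelian free subgroup and so is not virtually cyclic.

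It remains, in each family, to produce the axial element $a\in A$ and to know that its centralizer is virtually cyclic. Family (H) is the simplest and needs no normality: every infinite subgroup of a word-hyperbolic group contains an element of infinite order, so I take such an $a\in A$, and the centralizer of an infinite-order element of a hyperbolic group is virtually cyclic, being contained in the stabilizer $E(a)$ of the pair $a^{\pm\infty}\in\partial\Gamma$; two commuting infinite-order elements share this pair of fixed points, which is exactly the boundary data needed for the bootstrap. For (MCG) with genus $\ge2$ I would run the same argument with $\partial\Gamma$ replaced by the Gromov-hyperbolic curve complex (Masur--Minsky), on which $\Gamma$, and hence the finite-index subgroup $N$, acts non-elementarily with the pseudo-Anosov classes as the loxodromic isometries. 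Because $A$ is infinite and normal in $N$, its limit set equals that of $N$, so $A$ contains a pseudo-Anosov $a$; the centralizer of a pseudo-Anosov is virtually cyclic by McCarthy's theorem, and commuting with $a$ forces the same pair of invariant foliations, so the bootstrap again applies. The genus-$1$ case is $\SL(2,\Z)$, which is virtually free and not virtually cyclic, and is thus already covered by (H).

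I expect family (N-P) to be the real work. Here $\Gamma$ acts properly discontinuously, cocompactly and by isometries on the rank-one Hadamard manifold $X=\widetilde M$, and the axial element is a rank-one axial isometry. Two inputs from Ballmann's theory of rank-one manifolds are required. The first is a rank-one Preissmann property: a rank-one axis bounds no flat half-plane, so any isometry commuting with a rank-one axial isometry must preserve its axis and its two endpoints, forcing its centralizer to be virtually cyclic; this is what makes the general mechanism run. The second, and the genuine obstacle, is that the infinite normal subgroup $A\trianglelefteq N$ actually contains a rank-one axial isometry. I would obtain this from the facts that the limit set of the normal subgroup $A$ equals the full limit set $\partial X$ of the cocompact group $N$, and that attracting endpoints of rank-one axial isometries are dense in this limit set, so that a ping-pong argument produces a rank-one element lying in $A$. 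Combining these two inputs with the mechanism of the first paragraph completes (N-P), and hence the theorem.
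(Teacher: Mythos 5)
Your bootstrap mechanism -- find an element whose centraliser is virtually cyclic, then play it off against the commuting factors -- is exactly the Preissmann-style centraliser analysis behind the cited proof in~\cite{KL}, and the same pattern this paper codifies in Proposition~\ref{prop:acentral} and reuses for $\Out(F_n)$ in Proposition~\ref{p:free}. But you diverge at the crucial step, and in a costly way: you insist on producing the axial element \emph{inside} the factor $A=\varphi(\Gamma_1)$, which forces you to invoke normality of $A$ in $N$ plus heavy structure theorems -- Ivanov's theorem that infinite normal subgroups of finite-index subgroups of the mapping class group contain pseudo-Anosovs, and, for (N-P), the claim that an infinite normal subgroup of a cocompact rank-one group contains a rank-one isometry. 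You correctly identify the latter as ``the real work'' and only sketch it; note it is a genuine theorem, not a formality, since without the rank-one hypothesis it fails outright ($\Z\times 0$ is normal in $\Z^2$ acting on $\R^2$ but its limit set is two points). The paper's route avoids all of this with one observation, visible in the proofs of Proposition~\ref{prop:acentral} and Proposition~\ref{p:free}: take the special element $g$ in the finite-index subgroup $N=AB$ itself -- it exists there because $N$ is still non-elementary hyperbolic, still finite index in the mapping class group, still cocompact on $\widetilde M$ (Ballmann) -- and write $g=g_1g_2$ with $g_i$ in the factors. Since the factors commute, $g_1$ and $g_2$ both lie in $C_N(g)$, and the bootstrap starts from there. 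So the paper needs only ``special elements exist in finite-index subgroups,'' whereas you need the strictly stronger ``special elements exist in infinite normal subgroups.'' Your stronger input is true and is essentially what powers the refinement to subnormal subgroups (Theorem~\ref{t:subnormal}), but as a proof of Theorem~\ref{t:alg} it is harder than necessary, and the (N-P) case is the one place your write-up is genuinely incomplete.

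One step also needs tightening: from ``$A$ and $B$ are virtually cyclic'' you cannot conclude that $N=AB$ is virtually cyclic -- commuting virtually cyclic subgroups can generate $\Z\times\Z$. You must use what you half-state: either that $a$ and $b$, lying in the virtually cyclic group $C_N(a)$, have a common nontrivial power $z\in A\cap B$, so that $N=AB\subseteq C_N(z)$ with $z$ again of axial type (Lemma~\ref{l:center} then also gives the infinite-centre contradiction directly); or that $A$ and $B$ both stabilise the endpoint pair of $a$, whose stabiliser in a properly acting group is virtually cyclic. Either patch is immediate from ingredients you already have, but as written the deduction has a hole.
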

As a consequence of the discussion in this paper, we extend Theorem~\ref{t:alg} in several directions. 

After the preliminary  Section~\ref{s:prelim}, this paper consists of two main parts. In the first part, 
comprising Sections~\ref{s:acentral}--\ref{s:bc}, we develop criteria to 
show that groups are not presentable by products. Although numerous examples are interspersed 
in this first part, we then devote the second part of the paper, comprising Sections~\ref{s:thompson}, \ref{s:aut}
and \ref{s:free}, to systematically testing the criteria from the first part on certain interesting classes of 
groups, leading to further examples.

In Section~\ref{s:acentral} we discuss groups with acentral subgroups, a notion tailored to the 
analysis of centralisers \`a la Preissmann. This discussion subsumes most of the ad hoc arguments
that went into the proof of Theorem~\ref{t:alg}, but it also applies to other interesting examples, 
such as free products and certain elementary amenable groups obtained as semidirect products.

In Section~\ref{s:cost} we develop obstructions to presentability by products coming from $L^2$-Betti
numbers, from cost in the sense of Levitt and Gaboriau, and from the rank gradient in the sense of Lackenby.

In Section~\ref{s:Powers} we consider the Powers property introduced by de~la~Harpe, and in Section~\ref{s:bc}
the second bounded cohomology with coefficients in the regular representation, as pioneered by Burger, Monod and Shalom. These considerations 
show that the hyperbolicity in statement~(H) of Theorem~\ref{t:alg} can be replaced by a ``cohomological negative
curvature'' assumption. Needless to say, these ``negative curvature'' obstructions do not apply to amenable groups,
although some of them are not presentable by products and are amenable (sic!) to a direct analysis of centralisers
\`a la Preissmann.

In Section~\ref{s:thompson} we test our criteria on Richard Thompson's groups, which are not elementary 
amenable, but could still be amenable.  In Section~\ref{s:aut} we discuss the automorphism groups of free groups,
proving the natural generalization to this class of the statement about mapping class groups in Theorem~\ref{t:alg}.
Finally in Section~\ref{s:free} we discuss groups with infinitely many ends, in particular free products and their applications to
connected sum decompositions of manifolds dominated by products.

The final Section~\ref{s:final} contains some further extensions of our results, and a discussion of the context
in geometric and measurable group theory. The Appendix summarizes the applicability of different criteria 
to various classes of groups.

\subsection*{Acknowledgement}
This paper was begun in 2008 as we were trying to understand some comments that N.~Monod kindly made on~\cite{KL}.
More recently, he has again offered some crucial insights. We are very grateful to N.~Monod for his generous
contributions to this work. In particular, most of the results and proofs in Sections~\ref{s:L2Betti}, \ref{ss:cost}, 
\ref{s:bc} and~\ref{ss:normal} were suggested by him.

%%%%%%%%%%%%%%%%%%%%%%%%%%%%%%%%%%%%%%%%%%%%%%%%%%%%%%%%%%%%%%%%%%%
\section{Preliminaries}\label{s:prelim}

Throughout this paper we only consider finitely generated groups. This restriction is necessary for some of the results we use,
but represents no significant loss of generality. In particular, it is always satisfied when considering fundamental
groups of compact manifolds, as in the context of the domination relation for manifolds~\cite{KL}.

We recall a few of the elementary properties of groups not presentable
by products developed in~\cite{KL}. We do not give any proofs in this section as 
all the results that need proof were proved in Subsection~3.1 of loc.~cit.

Consider a homomorphism
$\varphi\colon\Gamma_1\times\Gamma_2\longrightarrow\Gamma$ of groups.
Without loss of generality we can replace each $\Gamma_i$ by its image
in $\Gamma$ under the restriction of~$\varphi$, so that we may assume
the factors $\Gamma_i$ to be subgroups of $\Gamma$ and $\varphi$ to be
multiplication in~$\Gamma$.

\begin{lem}\label{l:C}
  If a group is not presentable by a product, then every finite index
  subgroup has finite centre.
\end{lem}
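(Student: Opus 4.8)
The plan is to argue by contradiction using the definition of presentability by products directly. Suppose $\Gamma$ is not presentable by a product and let $\Lambda\leq\Gamma$ be a subgroup of finite index; I want to show that its centre $Z:=Z(\Lambda)$ is finite. Assume instead that $Z$ is infinite, and look for a contradiction.

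The key observation is that centrality turns multiplication into a homomorphism. Since every element of $Z$ commutes with every element of $\Lambda$, the multiplication map
\[
\varphi\colon Z\times\Lambda\longrightarrow\Lambda,\qquad (z,\lambda)\longmapsto z\lambda,
\]
is a group homomorphism. First I would record that its image is all of $\Lambda$ (take $z=e$), and that $\Lambda$ has finite index in $\Gamma$; thus $\varphi$ exhibits a homomorphism from a product onto a finite-index subgroup of $\Gamma$, exactly the situation governed by Definition~\ref{repgroupsdef}.

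Next I would compute the images of the two factors under $\varphi$. Restricting to the first factor $Z\times\{e\}$ gives image $Z$, which is infinite by assumption; restricting to the second factor $\{e\}\times\Lambda$ gives image $\Lambda$, which is infinite because $\Gamma$ is infinite and $\Lambda$ has finite index. Hence both factors have infinite image, contradicting the hypothesis that $\Gamma$ is not presentable by a product. Therefore $Z(\Lambda)$ must be finite.

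There is no serious obstacle here: the entire content is the elementary remark that a central subgroup $Z$ yields the multiplication homomorphism $Z\times\Lambda\to\Lambda$, after which the definition applies verbatim. The only point requiring a word of care is that $\Lambda$ is genuinely infinite, which follows immediately from $\Gamma$ being infinite together with $[\Gamma:\Lambda]<\infty$.
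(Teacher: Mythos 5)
Your proof is correct and is essentially the argument the paper relies on (via its reference to~\cite{KL}): the centre $Z(\Lambda)$ of a finite-index subgroup $\Lambda$ gives a multiplication homomorphism $Z(\Lambda)\times\Lambda\to\Lambda$ onto a finite-index subgroup of $\Gamma$, and if $Z(\Lambda)$ were infinite both factors would have infinite image, contradicting Definition~\ref{repgroupsdef}. No gaps; the only hypotheses used ($\Gamma$ infinite, hence $\Lambda$ infinite) are exactly the ones you checked.
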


The following is a kind of converse to this observation:

\begin{prop}\label{p:C}
  If every subgroup of finite index in a group~$\Gamma$ has trivial
  centre, then $\Gamma$ is irreducible if and only if it is not
  presentable by a product.
\end{prop}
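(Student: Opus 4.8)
The plan is to prove the equivalence by establishing each implication through its contrapositive, interpreting \emph{reducible} in the standard way, namely that $\Gamma$ admits a finite index subgroup isomorphic to a direct product of two infinite groups (so that \emph{irreducible} is the negation of this). Only one of the two directions will actually use the trivial-centre hypothesis; the other is purely formal.

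First I would treat the direction ``not presentable by a product $\Rightarrow$ irreducible'', in the contrapositive form ``reducible $\Rightarrow$ presentable by a product'', which needs no hypothesis at all. If $\Lambda\leq\Gamma$ is a finite index subgroup and $\psi\colon A\times B\xrightarrow{\ \cong\ }\Lambda$ is an isomorphism with $A$ and $B$ infinite, then the composite $\varphi\colon A\times B\xrightarrow{\psi}\Lambda\hookrightarrow\Gamma$ is a homomorphism from a direct product onto the finite index subgroup $\Lambda$, and $\varphi(A)\cong A$, $\varphi(B)\cong B$ are both infinite. Hence neither factor has finite image, so $\Gamma$ is presentable by a product, as required.

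The substantive direction is ``irreducible $\Rightarrow$ not presentable by a product'', again in contrapositive form: assuming $\Gamma$ is presentable by a product, I must produce a finite index subgroup that splits as a product of two infinite groups. So let $\varphi\colon\Gamma_1\times\Gamma_2\to\Gamma$ be a homomorphism onto a finite index subgroup with both $\varphi(\Gamma_i)$ infinite. Using the reduction explained at the start of Section~\ref{s:prelim}, I replace each $\Gamma_i$ by $\varphi(\Gamma_i)$, so that $\Gamma_1,\Gamma_2\leq\Gamma$ are subgroups commuting elementwise (being images of the commuting factors of a direct product), both infinite, and whose product $\Lambda:=\Gamma_1\Gamma_2$ is a subgroup of finite index in $\Gamma$. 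The point is then that $\Lambda$ is the \emph{internal} direct product of $\Gamma_1$ and $\Gamma_2$ precisely when $\Gamma_1\cap\Gamma_2$ is trivial.

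The key step, and the only place where the hypothesis enters, is to show $\Gamma_1\cap\Gamma_2=1$. Any $z\in\Gamma_1\cap\Gamma_2$ commutes with $\Gamma_2$ (being an element of $\Gamma_1$) and with $\Gamma_1$ (being an element of $\Gamma_2$), hence with all of $\Lambda=\Gamma_1\Gamma_2$; thus $\Gamma_1\cap\Gamma_2\subseteq Z(\Lambda)$. Since $\Lambda$ has finite index in $\Gamma$, the hypothesis forces $Z(\Lambda)=1$, so $\Gamma_1\cap\Gamma_2=1$ and $\Lambda\cong\Gamma_1\times\Gamma_2$ is a finite index subgroup splitting as a product of two infinite groups; hence $\Gamma$ is reducible. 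I expect the only genuine subtlety to be this centraliser computation — recognising $\Gamma_1\cap\Gamma_2$ as a central subgroup of the finite index subgroup $\Lambda$ — together with checking that the reduction to commuting subgroups of $\Gamma$ preserves both the infiniteness of the two factors and the finiteness of the index.
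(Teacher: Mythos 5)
Your proof is correct and follows essentially the same route as the paper's: the substantive direction is exactly the content of Lemma~\ref{l:center}, on which the paper says its proof is based --- the intersection $\Gamma_1\cap\Gamma_2$ is central in the finite index image, so the trivial-centre hypothesis kills the kernel of the multiplication map and yields a finite index subgroup splitting as $\Gamma_1\times\Gamma_2$. The converse direction is the same formal observation in both treatments.
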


The proof of this proposition is based on the following lemma.

\begin{lem}\label{l:center}
  Let $\Gamma_1$, $\Gamma_2\subset\Gamma$ be commuting subgroups of a
  group~$\Gamma$ with the property that $\Gamma_1 \cup \Gamma_2$
  generates~$\Gamma$. Then the multiplication
  homomorphism~$\varphi\colon\Gamma_1\times\Gamma_2\longrightarrow\Gamma$
  is well-defined and surjective and the following statements hold:
\begin{enumerate}
\item the intersection~$\Gamma_1\cap \Gamma_2\subset\Gamma$ is a
  subgroup of the centre of~$\Gamma$, and
\item the kernel of~$\varphi$ is isomorphic to the Abelian
  group~$\Gamma_1\cap \Gamma_2$.
\end{enumerate}
\end{lem}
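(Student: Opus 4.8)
The plan is to take $\varphi$ to be the multiplication map $(g_1,g_2)\mapsto g_1 g_2$ and to check each assertion in turn, the only genuinely load-bearing hypothesis being that $\Gamma_1$ and $\Gamma_2$ commute elementwise. First I would verify that $\varphi$ is a homomorphism: comparing $\varphi\bigl((g_1,g_2)(h_1,h_2)\bigr)=g_1 h_1 g_2 h_2$ with $\varphi(g_1,g_2)\cdot\varphi(h_1,h_2)=g_1 g_2 h_1 h_2$, equality holds precisely because $h_1$ commutes with $g_2$, which is exactly the commuting hypothesis and the one place where it is indispensable. Surjectivity is then immediate: the image is a subgroup of $\Gamma$ containing both $\Gamma_1=\varphi(\Gamma_1\times\{e\})$ and $\Gamma_2=\varphi(\{e\}\times\Gamma_2)$, hence containing $\Gamma_1\cup\Gamma_2$, which by hypothesis generates $\Gamma$.

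For~(1), let $z\in\Gamma_1\cap\Gamma_2$. Viewed as an element of $\Gamma_1$ it commutes with all of $\Gamma_2$, and viewed as an element of $\Gamma_2$ it commutes with all of $\Gamma_1$; hence $z$ commutes with every element of $\Gamma_1\cup\Gamma_2$, and therefore with every element of the group they generate, namely all of $\Gamma$. Thus $\Gamma_1\cap\Gamma_2$, being an intersection of subgroups, is a subgroup contained in the centre of~$\Gamma$; in particular it is Abelian.

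For~(2), I would describe the kernel explicitly. Since $g_1 g_2=e$ forces $g_2=g_1^{-1}$, an element $(g_1,g_2)$ lies in $\ker\varphi$ if and only if $g_1\in\Gamma_1$ and $g_1^{-1}\in\Gamma_2$, i.e. $g_1\in\Gamma_1\cap\Gamma_2$; thus $\ker\varphi=\{(z,z^{-1}):z\in\Gamma_1\cap\Gamma_2\}$. The restriction to $\ker\varphi$ of the projection $\Gamma_1\times\Gamma_2\to\Gamma_1$ onto the first factor is a homomorphism whose image is exactly $\Gamma_1\cap\Gamma_2$ and whose kernel is trivial (if $z=e$ then $(z,z^{-1})=(e,e)$), which yields the desired isomorphism $\ker\varphi\cong\Gamma_1\cap\Gamma_2$.

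I do not expect any serious obstacle, as the content is entirely formal once one notices that the elementwise commuting hypothesis is what simultaneously makes $\varphi$ multiplicative and forces $\Gamma_1\cap\Gamma_2$ into the centre. The only point requiring care is to read the hypothesis in the strong sense that every element of $\Gamma_1$ commutes with every element of $\Gamma_2$ (equivalently $[\Gamma_1,\Gamma_2]=1$), rather than merely as $\Gamma_1\Gamma_2=\Gamma_2\Gamma_1$; it is this stronger reading that both the homomorphism property and the centrality claim require.
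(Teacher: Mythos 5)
Your proof is correct and complete; note that the paper itself gives no proof of this lemma, deferring instead to Subsection~3.1 of \cite{KL}, and your argument is exactly the standard one that such a proof consists of: the elementwise-commuting hypothesis makes $\varphi$ multiplicative, centrality of $\Gamma_1\cap\Gamma_2$ follows from commuting with a generating set, and the kernel is $\{(z,z^{-1}):z\in\Gamma_1\cap\Gamma_2\}$. Your use of the first-factor projection to identify $\ker\varphi\cong\Gamma_1\cap\Gamma_2$ is a clean touch, since it sidesteps having to verify directly that $z\mapsto(z,z^{-1})$ is a homomorphism (which would itself require the already-established commutativity of $\Gamma_1\cap\Gamma_2$).
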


This gives the following exact sequences
\begin{equation} 
    1 \longrightarrow \Gamma_1 \cap \Gamma_2
      \longrightarrow \Gamma
      \longrightarrow \Gamma/(\Gamma_1 \cap \Gamma_2)
      \longrightarrow 1 \label{eq:2}\ ,
\end{equation}
\begin{equation}
    1 \longrightarrow \Gamma_1 \cap \Gamma_2 
       \longrightarrow \Gamma_1 \times \Gamma_2
       \longrightarrow \Gamma
       \longrightarrow 1\label{eq:1} \ .
  \end{equation}  

Sometimes it is convenient to replace a given group by a subgroup of
finite index.  This transition does not affect presentability by
products by the following straightforward observation:

\begin{lem}\label{l:finindexpres}
  Let $\Gamma$ be a group. A finite index subgroup of~$\Gamma$ is
  presentable by a product if and only if $\Gamma$ is.
\end{lem}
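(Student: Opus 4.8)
The plan is to prove the two implications separately, working throughout with the reduced form of presentability explained before Lemma~\ref{l:C}: a group is presentable by a product precisely when it contains two commuting subgroups, both infinite, whose product has finite index (here the product is a subgroup because the factors commute, and it is the image of the multiplication homomorphism). Fix a finite index subgroup $\Lambda\le\Gamma$.

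For the easy implication, I would suppose $\Lambda$ is presentable by a product, witnessed by commuting infinite subgroups $\Lambda_1,\Lambda_2\le\Lambda$ with $\Lambda_1\Lambda_2$ of finite index in $\Lambda$. Regarding the $\Lambda_i$ as subgroups of $\Gamma$ via the inclusion $\Lambda\le\Gamma$, they still commute and are still infinite, and since $[\Gamma:\Lambda]<\infty$ the subgroup $\Lambda_1\Lambda_2$ retains finite index in $\Gamma$. Hence $\Gamma$ is presentable by a product.

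For the converse I would intersect with $\Lambda$. Suppose $\Gamma$ is presentable by a product, witnessed by commuting infinite subgroups $\Gamma_1,\Gamma_2\le\Gamma$ whose multiplication homomorphism $\varphi\colon\Gamma_1\times\Gamma_2\to\Gamma$ has image $\Gamma_1\Gamma_2$ of finite index in $\Gamma$. Set $\Lambda_i:=\Gamma_i\cap\Lambda$. Since $\Lambda$ has finite index in $\Gamma$, each $\Lambda_i$ has finite index in $\Gamma_i$, and as $\Gamma_i$ is infinite so is $\Lambda_i$. Moreover $\Lambda_1$ and $\Lambda_2$ commute (they lie inside the commuting $\Gamma_1$ and $\Gamma_2$) and are both contained in $\Lambda$, so $\Lambda_1\Lambda_2\subseteq\Lambda$.

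The one point that needs a small argument is that $\Lambda_1\Lambda_2$ has finite index in $\Lambda$, and this is really the only place where anything has to be checked. For it I would pass through the product group: the subgroup $\Lambda_1\times\Lambda_2$ has finite index in $\Gamma_1\times\Gamma_2$ because the index in a direct product is multiplicative, $[\Gamma_1\times\Gamma_2:\Lambda_1\times\Lambda_2]=[\Gamma_1:\Lambda_1]\,[\Gamma_2:\Lambda_2]<\infty$. Applying $\varphi$ and using that the image of a finite index subgroup under a homomorphism again has finite index, the image $\varphi(\Lambda_1\times\Lambda_2)=\Lambda_1\Lambda_2$ has finite index in $\varphi(\Gamma_1\times\Gamma_2)=\Gamma_1\Gamma_2$; since $\Gamma_1\Gamma_2$ has finite index in $\Gamma$, so does $\Lambda_1\Lambda_2$, and a fortiori $\Lambda_1\Lambda_2$ has finite index in $\Lambda$. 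Thus $\Lambda_1,\Lambda_2$ witness that $\Lambda$ is presentable by a product, which completes the equivalence. Beyond this index bookkeeping there is no genuine obstacle, which is precisely why the statement is recorded as a straightforward observation.
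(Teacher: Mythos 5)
Your proof is correct and takes the expected route: the ``only if'' direction is just composing with the inclusion plus multiplicativity of indices, and the ``if'' direction is the intersection construction $\Lambda_i:=\Gamma_i\cap\Lambda$ with the index bookkeeping you spell out (finite index survives intersection, direct products, and images under homomorphisms). This matches the straightforward argument the paper has in mind --- note it gives no proof here but defers to Subsection~3.1 of~\cite{KL}, where the same kind of reduction to commuting subgroups and passage to intersections is used --- so there is nothing to correct.
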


\section{Acentral subgroups and acentral extensions}\label{s:acentral}

In this section we define groups with acentral subgroups and acentral extensions and prove that they are not presentable by products. 
We shall give various examples, including in particular elementary amenable
groups that are not presentable by products because they are extensions of this type.

\begin{defn}
  Let $\Gamma$ be a group. A subgroup~$A$ of~$\Gamma$ is called
  \emph{acentral} if for every~$g \in A \setminus \{1\} \subset
  \Gamma$ the centraliser~$C_\Gamma(g)$ is contained in~$A$.  

  An extension~$1 \longrightarrow N \longrightarrow \Gamma
  \longrightarrow Q \longrightarrow 1$ of groups is \emph{acentral} 
  if the normal subgroup~$N$ is acentral.
\end{defn}

Our interest in these notions stems from the following result.
\begin{prop}\label{prop:acentral}
  Groups containing infinite acentral subgroups of infinite index
  are not presentable by products.
\end{prop}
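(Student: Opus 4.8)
The plan is to fix an arbitrary homomorphism $\varphi\colon\Gamma_1\times\Gamma_2\to\Gamma$ onto a finite-index subgroup and to show that one factor has finite image. Following the reduction described in the preliminaries, I would replace each $\Gamma_i$ by its image, so that $\Gamma_1,\Gamma_2$ become commuting subgroups of $\Gamma$ whose product $\Gamma_1\Gamma_2$ has finite index and $\varphi$ becomes multiplication; the claim then reduces to showing that $\Gamma_1$ or $\Gamma_2$ is finite. Throughout, let $A\subseteq\Gamma$ be the given infinite acentral subgroup of infinite index.

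The engine of the argument is the interaction between commutativity of the factors and acentrality of $A$. Since $\Gamma_1$ and $\Gamma_2$ commute elementwise, every $h\in\Gamma_1$ satisfies $\Gamma_2\subseteq C_\Gamma(h)$, and symmetrically. Hence, if $A$ contains a non-trivial element $h$ of $\Gamma_1$, then acentrality gives $\Gamma_2\subseteq C_\Gamma(h)\subseteq A$; and if in addition $\Gamma_2$ were infinite we could pick a non-trivial $h'\in\Gamma_2\subseteq A$ and repeat to obtain $\Gamma_1\subseteq C_\Gamma(h')\subseteq A$, so that $\Gamma_1\Gamma_2\subseteq A$. As $\Gamma_1\Gamma_2$ has finite index, this would force $A$ to have finite index, contradicting the hypothesis. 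Thus, once we know that $A$ meets one of the factors non-trivially, the other factor must be finite and we are done.

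It therefore remains to produce a non-trivial element of $A$ inside $\Gamma_1$ or $\Gamma_2$, and this is the step I expect to be the crux. Since $A$ is infinite and $\Gamma_1\Gamma_2$ has finite index, the intersection $A\cap\Gamma_1\Gamma_2$ is infinite, so I may choose $g\in(A\cap\Gamma_1\Gamma_2)\setminus\{1\}$ and write $g=g_1g_2$ with $g_i\in\Gamma_i$. A short centraliser computation, using that $g_2$ commutes with all of $\Gamma_1$, shows that $\Gamma_1\cap C_\Gamma(g)=C_{\Gamma_1}(g_1)$, and acentrality gives $C_\Gamma(g)\subseteq A$, whence $C_{\Gamma_1}(g_1)\subseteq\Gamma_1\cap A$. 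Now if $A$ met both factors only trivially, then $\Gamma_1\cap A=\{1\}$ would force $g_1\in C_{\Gamma_1}(g_1)=\{1\}$, and symmetrically $g_2=1$, contradicting $g\neq1$. Hence $A$ does meet $\Gamma_1$ or $\Gamma_2$ non-trivially, and the previous paragraph finishes the proof. The main obstacle is precisely this last point: acentrality controls only the centralisers of elements \emph{of} $A$, whereas the naturally available commuting elements $g_1,g_2$ need not themselves lie in $A$; decomposing a non-trivial element of $A\cap\Gamma_1\Gamma_2$ and invoking the centraliser identity $\Gamma_1\cap C_\Gamma(g)=C_{\Gamma_1}(g_1)$ is what bridges this gap.
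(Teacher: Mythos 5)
Your proof is correct and follows essentially the same route as the paper's: the paper's three steps (showing $\Gamma_i \cap A = 1$ via acentrality and commutation, decomposing a non-trivial $g = g_1g_2 \in A \cap \im\varphi$ and using $g_1, g_2 \in C_\Gamma(g) \subseteq A$, and the pigeonhole fact that a finite-index subgroup must meet the infinite group $A$ non-trivially) appear in your argument in contrapositive order, with your centraliser identity $\Gamma_1 \cap C_\Gamma(g) = C_{\Gamma_1}(g_1)$ being just a restatement of the paper's observation that $g_1$ commutes with $g$. The only difference is organizational — you argue forward to show one factor is finite, while the paper assumes both infinite and derives a contradiction — but the ideas are identical.
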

\begin{proof}
 The proof, in the same spirit as the ad hoc arguments of~\cite{KL}, 
  consists of a careful analysis of the commutation relations in a group~$\Gamma$
  containing an infinite acentral subgroup~$A$ of infinite index.

  Assume for a contradiction that $\Gamma$ is presentable by a
  product. Then there are commuting infinite subgroups~$\Gamma_1$ and $\Gamma_2$
  such that the multiplication homomorphism~$\varphi \colon \Gamma_1
  \times \Gamma_2 \longrightarrow \Gamma$ is well-defined and has
  finite index image in~$\Gamma$.

  As a first step we show that $\Gamma_i \cap A = 1$. If $\Gamma_1 \cap A$ 
  contained a non-trivial element~$g$, then -- because $A$ is acentral and the
  $\Gamma_i$ commute -- we would obtain~$\Gamma_2
  \subset C_\Gamma(g) \subset A$. Applying acentrality again, we then
  deduce that also~$\Gamma_1 \subset A$. But then $\im \varphi \subset
  A$, contradicting the fact that $\im\varphi$ has finite index
  in~$\Gamma$ and $A$ has infinite index in $\Gamma$. Therefore, indeed
  $\Gamma_1 \cap A = 1$ and $\Gamma_2 \cap A = 1$.

  As a second step we show that even $(\im \varphi )\cap A = 1$. Assume for
  a contradiction that $(\im \varphi )\cap A \neq 1$ and let~$g \in (\im
  \varphi )\cap A \setminus \{1\}$. Because $\Gamma_1$ and $\Gamma_2$
  commute and because $\Gamma_1 \cup \Gamma_2$ generates~$\im
  \varphi$, we find elements~$g_1 \in \Gamma_1$ and $g_2 \in \Gamma_2$
  such that~$g = g_1 \cdot g_2$; notice that $g_1 \in C_\Gamma(g)$.
  Therefore, acentrality implies that~$g_1 \in A$, and hence -- by the
  first step -- we have $g_1 = 1$. Applying the first step
  to~$\Gamma_2$, we obtain also $g_2 =1$, which contradicts~$g \neq
  1$.  So $(\im \varphi )\cap A = 1$.

  As the third and last step we show that any subgroup~$\Gamma'$ of~$\Gamma$ with
  $\Gamma' \cap A = 1$ cannot have finite index in~$\Gamma$.
 Since $A$ is infinite, if $\Gamma'$ had only finitely many cosets in $\Gamma$, then by the 
pigeonhole principle there
would be a coset, say $g\Gamma'$, containing infinitely many elements of $A$.
In particular $g\Gamma'\cap A$ would contain two elements $a_1\neq a_2$. But then
it would follow that $1\neq a_2^{-1}\cdot a_1\in \Gamma'\cap A$, which would be a contradiction.

Combining the second and third steps we reach the conclusion that $\im\varphi$ 
  cannot have finite index in~$\Gamma$. Thus, $\Gamma$ is not
  presentable by a product after all. 
\end{proof}

\begin{cor}%[Acentral extensions are not presentable by products]
\label{cor:acentralext}
  Let $1 \longrightarrow N \longrightarrow \Gamma \longrightarrow Q
  \longrightarrow 1$ be an acentral extension of groups with $N$ and
  $Q$ infinite. Then $\Gamma$ is not presentable by a product.
\end{cor}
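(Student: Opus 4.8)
The plan is to deduce Corollary~\ref{cor:acentralext} directly from Proposition~\ref{prop:acentral} by verifying that its hypotheses are met. Since the extension $1 \longrightarrow N \longrightarrow \Gamma \longrightarrow Q \longrightarrow 1$ is acentral, the normal subgroup~$N$ is by definition an acentral subgroup of~$\Gamma$. Proposition~\ref{prop:acentral} asserts that a group containing an infinite acentral subgroup of infinite index is not presentable by products, so the entire task reduces to checking two conditions on~$N$: that it is infinite, and that it has infinite index in~$\Gamma$.

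First I would observe that $N$ is infinite simply by hypothesis, so that condition is immediate. The only point requiring an argument is that $N$ has infinite index in~$\Gamma$. Here I would use the exactness of the sequence: the quotient $\Gamma/N$ is isomorphic to~$Q$, and the index of~$N$ in~$\Gamma$ equals the order of~$Q$. Since $Q$ is assumed infinite, the index $[\Gamma : N] = |Q|$ is infinite as well. Thus $N$ is an infinite acentral subgroup of infinite index.

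With both hypotheses of Proposition~\ref{prop:acentral} verified, the conclusion that $\Gamma$ is not presentable by a product follows at once. The argument is essentially a bookkeeping exercise, translating the language of acentral extensions into that of acentral subgroups, so there is no genuine obstacle here; the substantive content lives entirely in the proof of Proposition~\ref{prop:acentral}. The one place to be slightly careful is the identification $[\Gamma : N] = |Q|$, which is a standard consequence of the first isomorphism theorem applied to the surjection $\Gamma \longrightarrow Q$ with kernel~$N$.
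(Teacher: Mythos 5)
Your proof is correct and matches the paper's intended argument exactly: the corollary is stated immediately after Proposition~\ref{prop:acentral} precisely because $N$ is by definition an acentral subgroup that is infinite by hypothesis and of infinite index since $\Gamma/N \cong Q$ is infinite. There is nothing more to it, and your careful verification of the two hypotheses is all the paper leaves implicit.
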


Before proceeding, we would like to point out that many instances of the ad hoc arguments
of~\cite{KL} can be subsumed under the result of Proposition~\ref{prop:acentral}. For 
example, in the mapping class groups of surfaces of genus $\geq 3$ there are pseudo-Anosov
elements $g$ with the property that the cyclic subgroup generated by $g$ is acentral. Thus
the case (MCG) in Theorem~\ref{t:alg} follows from Proposition~\ref{prop:acentral}. Note that
it is crucial here that we consider acentral subgroups that are not necessarily normal. This is 
also true in the next example.

\begin{ex}\label{ex:free}
Let $\Gamma=\Delta_1\star\Delta_2$ be a non-trivial free product, and $g\in\Gamma$ an 
element that is not contained in a conjugate of one of the free factors, e.g., $g=g_1g_2g_1^{-1}g_2^{-1}$
with $g_i\in\Delta_i\setminus\{ e\}$. Then the centraliser $C_{\Gamma}(g)$ is an infinite acentral 
subgroup of $\Gamma$; see~\cite[Problem~28 on p.~196]{MKS}. 
As soon as one of the $\Delta_i$ has order $>2$, the index of $C_{\Gamma}(g)$
in $\Gamma$ is also infinite, so that $\Gamma$ is not presentable by a product.
\end{ex}

%Acentral extensions with infinite kernel and infinite
%quotient are cheap~\cite[Theorem~3.4]{gaboriauOE}\cite[Theorem~35.5]{KM},
%at least if the kernel has fixed price. Moreover, if $\Gamma$ is residually finite,
%$N$ is finitely generated, and $Q$ admits finite quotients of arbitrarily large order,
%then $\RG(\Gamma)=0$ by~\cite[Prop.~13]{AN}. Thus the results of Section~\ref{s:cost} tend not to be 
%applicable to acentral extensions.

\begin{cor}%[Semi-direct products not presentable by products]
  \label{cor:semidirect}
  Let $\Gamma$ denote the semi-direct product group~$N \rtimes_\alpha Q$,
  where $N$ is a non-trivial Abelian group, $Q$ is an infinite group and the
  action on~$N$ given by~$\alpha \colon Q \longrightarrow \Aut(N)$ is free
  outside~$0 \in N$. 
 % \begin{enumerate}
    Then the extension~$0 \longrightarrow N \longrightarrow
      \Gamma \longrightarrow Q \longrightarrow 1$ is acentral and $N$
      is infinite.
     In particular, $\Gamma$ is not presentable by a product.
%  \end{enumerate}
\end{cor}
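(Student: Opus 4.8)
The plan is to check directly that the normal subgroup~$N$ is acentral in~$\Gamma$ and that it is infinite, and then to quote Corollary~\ref{cor:acentralext}. Recall that $N$ is automatically normal in the semi-direct product~$\Gamma = N \rtimes_\alpha Q$, so it suffices to verify the acentrality condition for~$N$ as a subgroup.

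First I would compute the centraliser in~$\Gamma$ of an arbitrary non-trivial element of~$N$. Writing elements of~$\Gamma$ in the form $x = nq$ with $n \in N$ and $q \in Q$, and using the defining relation $qnq^{-1} = \alpha(q)(n)$, I would fix $m \in N \setminus \{0\}$ and impose $xm = mx$. A short manipulation, using that $N$ is Abelian, reduces this identity to the single condition $\alpha(q)(m) = m$. This is exactly where the freeness hypothesis enters: since $m \neq 0$, freeness of the $Q$-action outside~$0$ forces $q = 1$, whence $x = n \in N$. Thus $C_\Gamma(m) \subset N$ for every $m \in N \setminus \{0\}$, which is precisely the statement that $N$ is acentral, and so the extension $0 \longrightarrow N \longrightarrow \Gamma \longrightarrow Q \longrightarrow 1$ is acentral.

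Next I would argue that $N$ is infinite. Fixing any $m \in N \setminus \{0\}$, its $Q$-orbit $\{\alpha(q)(m) : q \in Q\}$ is contained in~$N$; by freeness the stabiliser of~$m$ in~$Q$ is trivial, so by the orbit--stabiliser correspondence this orbit is in bijection with~$Q$. As $Q$ is infinite, the orbit---and hence $N$---is infinite. With the extension shown to be acentral and both $N$ and~$Q$ infinite, Corollary~\ref{cor:acentralext} immediately yields that $\Gamma$ is not presentable by a product. I expect no real obstacle here: the only substantive point is the centraliser computation, whose answer is completely forced by the freeness assumption, and everything else is a direct orbit-counting argument followed by an appeal to the preceding corollary.
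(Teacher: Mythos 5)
Your proposal is correct and follows essentially the same route as the paper's proof: both verify acentrality of~$N$ by computing the centraliser of a non-trivial element of~$N$ and using the freeness hypothesis to force the $Q$-component to be trivial, and both deduce infiniteness of~$N$ from the free action of the infinite group~$Q$ on~$N \setminus \{0\}$ before invoking Corollary~\ref{cor:acentralext}. The only cosmetic difference is notational (you write elements as $x = nq$, the paper uses pairs $(n', q')$), so there is nothing to add.
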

\begin{proof}
  In view of Corollary~\ref{cor:acentralext}, it suffices to prove that $N$ is infinite and
  acentral.  Infiniteness is clear since the infinite group~$Q$ acts freely on the (non-empty) set~$N
  \setminus \{0\}$.

  Let $g \in N \setminus\{0\}$ and let $g' \in C_\Gamma(g)$; we write $g
  = (n,1)$ and $g' = (n', q')$ with~$n$,~$n' \in N$ and $q' \in Q$. By
  definition of the semi-direct product, we obtain
  \begin{align*}
    g  \cdot g' & = \bigl( n + n', q' \bigr), \\
    g' \cdot g  & = \bigl( n'+ \alpha(q')(n), q' \bigr), 
  \end{align*}
  and hence $n + n' = n' + \alpha(q')(n)$; because $N$ is Abelian, we
  use ``+'' for the group structure in~$N$. In particular, $n =
  \alpha(q')(n)$. As $\alpha$ acts freely on~$N$ outside~$0$ and
  $n\neq 0$, this implies~$q' = 1$, i.e., $g' \in N \rtimes_\alpha 1 =
  N$. Thus $N$ is indeed acentral.
\end{proof}

This corollary provides us with explicit examples of elementary amenable
groups that are not presentable by products, by taking semi-direct
products of infinite amenable groups~$Q$ with suitable actions on
Abelian groups~$N$. Note that the obstructions to presentability by products
developed below
coming from rank gradient, cost and $L^2$-Betti numbers (Section~\ref{s:cost}),
from the Powers property (Section~\ref{s:Powers}), or from bounded 
cohomology (Section~\ref{s:bc}), vanish for amenable groups.

\begin{ex}%[Amenable groups that are not presentable by products I]
    \label{ex:amenable}
    Let $\alpha \colon \Z \longrightarrow \SL(2,\Z)$ be given by the
    matrix    
    \[ A = \begin{pmatrix}
              2 & 1 \\
                        1 & 1
           \end{pmatrix} \ .
    \]
    Then the action of~$\Z$ on~$\Z^2$ given by~$A$ is free
    outside~$0$ and Corollary~\ref{cor:semidirect} shows that the
    corresponding semi-direct product~$\Gamma = \Z^2 \rtimes_\alpha \Z$ is not
    presentable by a product. 
    On the other hand, $\Gamma$ is solvable and thus amenable.
\end{ex}

    In this example $\Gamma$ is the fundamental group of a
    $T^2$-bundle $M$ over $S^1$ whose monodromy is the Anosov
     diffeomorphism given by $A$ acting linearly on $\R^2/\Z^2$.
    This torus bundle carries the solvable Thurston geometry~$\Sol^3$,
    and $\Gamma$ is a lattice in~$\Sol^3$. 
    %Note that $\Gamma$ is residually finite, and that its rank 
    %gradient vanishes by Lemma~\ref{l:lack}.
    For all such lattices we have:
\begin{cor}\label{c:Sol}
Let $\Gamma$ be any cocompact lattice in~$\Sol^3$. Then $\Gamma$ is
not presentable by a product.
\end{cor}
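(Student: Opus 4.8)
The plan is to identify the algebraic structure of cocompact lattices in $\Sol^3$ explicitly and then reduce the statement to Corollary~\ref{cor:semidirect}, using Lemma~\ref{l:finindexpres} to pass freely to a convenient finite-index subgroup.

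First I would recall the structure theory of such lattices. Write $\Sol^3 = \R^2 \rtimes_\phi \R$, where $\phi(t)$ acts on $\R^2$ as the diagonal matrix with entries $e^t$ and $e^{-t}$; the normal subgroup $\R^2$ is the nilradical of $\Sol^3$. Since $\Sol^3$ is the identity component, hence a finite-index normal subgroup, of its full isometry group, Lemma~\ref{l:finindexpres} lets me assume that $\Gamma$ is a lattice in the Lie group $\Sol^3$ itself. By the structure theory of lattices in simply connected solvable Lie groups, $\Gamma$ then meets $\R^2$ in a lattice $\Lambda \cong \Z^2$, and its image in $\Sol^3/\R^2 \cong \R$ is a cocompact lattice, hence infinite cyclic. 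Because extensions with free quotient split, this gives an isomorphism $\Gamma \cong \Z^2 \rtimes_A \Z$, where $A \in \SL(2,\Z)$ describes the conjugation action on $\Lambda$ of a generator mapping to some $t_0 \neq 0$ in $\R$. Since that generator acts on $\R^2$ as $\phi(t_0)$, the matrix $A$ is conjugate over $\R$ to $\phi(t_0)$ and hence has eigenvalues $e^{t_0}$ and $e^{-t_0}$; in particular $\operatorname{tr} A = 2\cosh t_0 > 2$, so $A$ is hyperbolic.

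Next I would verify the hypotheses of Corollary~\ref{cor:semidirect} for $\Gamma \cong \Z^2 \rtimes_A \Z$, with $N = \Z^2$ non-trivial Abelian and $Q = \Z$ infinite. The only point to check is that the action $\alpha(n) = A^n$ is free outside $0 \in \Z^2$, i.e.\ that $A^n$ has no non-zero fixed vector for any $n \neq 0$. This is immediate, since the eigenvalues of $A^n$ are $e^{\pm n t_0}$, which are real numbers different from $1$ whenever $n \neq 0$; thus $1$ is never an eigenvalue of $A^n$ for $n \neq 0$, so $A^n v = v$ with $v \in \Z^2$ forces $v = 0$, which is exactly freeness of the action outside $0$.

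Corollary~\ref{cor:semidirect} then shows that $\Z^2 \rtimes_A \Z$ is not presentable by a product, and a final application of Lemma~\ref{l:finindexpres} transfers this conclusion back to the original lattice. I expect the only delicate point to be the first paragraph: one must invoke the lattice structure theorem correctly to guarantee that $\Gamma \cap \R^2$ is a lattice and that the quotient is infinite cyclic, and then identify the monodromy $A$ with the integral matrix realising $\phi(t_0)$. Once this is in place, the remainder is a direct application of the already-established Corollary~\ref{cor:semidirect}.
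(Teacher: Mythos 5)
Your proof is correct, and its endgame coincides with the paper's: both arguments reduce to the fact that a cocompact lattice in $\Sol^3$ has a finite-index subgroup isomorphic to $\Z^2 \rtimes_A \Z$ with $A$ a hyperbolic integral matrix, at which point Corollary~\ref{cor:semidirect} (this is exactly the discussion of Example~\ref{ex:amenable} and the example following it) together with Lemma~\ref{l:finindexpres} finishes the proof. The difference is in how that structural fact is obtained. The paper cites Scott~\cite[Theorem~5.3(i)]{Scott}, i.e.\ the three-manifold statement that such a lattice virtually is the fundamental group of the mapping torus of a hyperbolic torus diffeomorphism. You instead derive the same statement from the structure theory of lattices in simply connected solvable Lie groups: Mostow's theorem that $\Gamma$ meets the nilradical $\R^2$ of $\Sol^3$ in a lattice $\Lambda\cong\Z^2$ and that the image of $\Gamma$ in $\Sol^3/\R^2\cong\R$ is again a lattice, hence infinite cyclic; then splitting of the extension over $\Z$, and the observation that the monodromy is conjugate over $\R$ to $\mathrm{diag}(e^{t_0},e^{-t_0})$, so no nontrivial power of $A$ has eigenvalue~$1$. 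Your route has two advantages: it is self-contained on the group-theoretic side, and it makes explicit the passage from the full isometry group to its identity component $\Sol^3$ (needed if ``lattice in $\Sol^3$'' is read as a lattice in $\Isom(\Sol^3)$), which the paper's citation of Scott handles implicitly. The cost is that you replace one nontrivial external input by another: the step ``$\Gamma\cap\R^2$ is a lattice in $\R^2$ and the image of $\Gamma$ in $\R$ is a lattice'' is precisely Mostow's theorem (see, e.g., Raghunathan's book on discrete subgroups of Lie groups), and your write-up should cite it there, since it is the only point where something beyond elementary algebra is used.
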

\begin{proof}
Any such lattice has a finite index subgroup that is the fundamental group
of the mapping torus of a hyperbolic torus diffeomorphism; see~\cite[Theorem~5.3(i)]{Scott}.
The discussion in Example~\ref{ex:amenable} applies to this finite index subgroup.
\end{proof}

The previous example can be generalized to higher dimensions.
\begin{ex}%[Amenable groups that are not presentable by products II]
    %\label{ex:amenable}
    Let $n \in \bN_{\geq 2}$ and let $\alpha \colon \Z \longrightarrow
    \GL(n,\Z)$ be given by a matrix~$A \in \GL(n,\Z)$ such that no
    non-trivial power of~$A$ has eigenvalue~$1$. Then the action of~$\Z$ on~$\Z^n$ given by~$A$ is free
    outside~$0$ and Corollary~\ref{cor:semidirect} shows that the
    corresponding semi-direct product~$\Z^n \rtimes_\alpha \Z$ is not
    presentable by a product. Again the group is solvable
    and thus amenable.
\end{ex}

%%%%%%%%%%%%%%%%%%%%%%%%%%%%%%%%%%%%%%%%%%%%%%%%%%%%%%%%%%%%%%%%%%%

\section{$L^2$-Betti numbers, cost, and rank gradient}\label{s:cost}

In this section we discuss obstructions to presentability by products
coming from $L^2$-Betti numbers, from cost, and from the rank
gradient. 

\subsection{$L^2$-Betti numbers}\label{s:L2Betti}

Like the ordinary Betti numbers, \mbox{$L^2$-Bet}\-ti numbers of
groups can be defined as dimensions of certain homology modules,
namely as the von~Neumann dimensions of homology with coefficients in the group von~Neumann
algebra. For a thorough treatment of $L^2$-invariants we refer
the reader to L\"uck's book~\cite{lueckl2}.

The first $L^2$-Betti number is an obstruction for groups 
to be presentable by products: 

\begin{prop}%[Presentability by products and the first $L^2$-Betti number]
\label{firstl2prop}
  If the group~$\Gamma$ is presentable by a product then $\ltb 1 \Gamma = 0$.
\end{prop}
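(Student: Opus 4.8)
The plan is to reduce to the known vanishing of $b_1^{(2)}$ for direct products of infinite groups, and then to transport this across the two group-theoretic operations that separate such a product from $\Gamma$, namely a central quotient and a finite-index inclusion, both of which are controlled by standard results of $L^2$-invariant theory \cite{lueckl2}. Assume $\Gamma$ is presentable by a product; by definition there is a homomorphism whose image has finite index in $\Gamma$ and both of whose factors have infinite image. As in the preliminaries I may therefore assume that $\Gamma_1,\Gamma_2\subset\Gamma$ are commuting \emph{infinite} subgroups whose product $\Lambda:=\langle\Gamma_1,\Gamma_2\rangle=\im\varphi$ has finite index in $\Gamma$, and that $\varphi$ is multiplication. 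By Lemma~\ref{l:center} and the exact sequence~\eqref{eq:1} there is a central extension
\[ 1\longrightarrow Z\longrightarrow \Gamma_1\times\Gamma_2\longrightarrow \Lambda\longrightarrow 1, \]
where $Z:=\Gamma_1\cap\Gamma_2$ is a subgroup of the centre of $\Lambda$.

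First I would record the two inputs that do not involve $Z$. Since $[\Gamma:\Lambda]<\infty$, multiplicativity of $L^2$-Betti numbers under finite-index subgroups gives $\ltb 1 \Gamma=0$ if and only if $\ltb 1 \Lambda=0$, so it suffices to treat $\Lambda$. Next, the Künneth formula yields
\[ \ltb 1 {\Gamma_1\times\Gamma_2}=\ltb 0 {\Gamma_1}\cdot\ltb 1 {\Gamma_2}+\ltb 1 {\Gamma_1}\cdot\ltb 0 {\Gamma_2}, \]
and both summands vanish because $\ltb 0 {\Gamma_i}=0$ for the infinite groups $\Gamma_i$; here the degree-zero homology module is itself trivial, so the product is genuinely $0$ even when some $\ltb 1 {\Gamma_i}$ is infinite. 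Hence $\ltb 1 {\Gamma_1\times\Gamma_2}=0$.

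It then remains to transfer this vanishing across the central extension, and I would split on the size of $Z$. If $Z$ is infinite, then $Z$ is an infinite \emph{amenable} (indeed abelian) normal subgroup of $\Lambda$, and the Cheeger--Gromov vanishing theorem gives $\ltb p \Lambda=0$ for all $p$, in particular $\ltb 1 \Lambda=0$. If $Z$ is finite, then $\Lambda$ is the quotient of $\Gamma_1\times\Gamma_2$ by a finite central subgroup, and the behaviour of $L^2$-Betti numbers under quotients by finite normal subgroups gives $\ltb 1 {\Gamma_1\times\Gamma_2}=\tfrac1{|Z|}\ltb 1 \Lambda$; combined with the previous paragraph this forces $\ltb 1 \Lambda=0$. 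In either case $\ltb 1 \Lambda=0$, and hence $\ltb 1 \Gamma=0$.

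I expect the main obstacle to be precisely the central-quotient step. Passing to a central quotient is \emph{not} monotone for $b_1^{(2)}$ in general (for instance $\Gamma_1\times\Gamma_2$ can have vanishing first $L^2$-Betti number while a central quotient does not), so one cannot argue abstractly; the argument must exploit that the kernel $Z=\Gamma_1\cap\Gamma_2$ lies in the centre of $\Lambda$ by Lemma~\ref{l:center}, rather than being an arbitrary central subgroup of the product. A secondary point requiring care is the $0\cdot\infty$ ambiguity in the Künneth formula when the factors are not assumed finitely generated, which is resolved by observing that the degree-zero term is a zero module.
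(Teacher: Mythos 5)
Your proposal is correct and takes essentially the same route as the paper's proof: reduce to the finite-index image $\Lambda=\im\varphi$ by multiplicativity of $L^2$-Betti numbers, split on whether $\Gamma_1\cap\Gamma_2$ is infinite (infinite amenable normal subgroup, hence vanishing) or finite (finite-normal-kernel formula $\ltb 1 \Lambda = |\Gamma_1\cap\Gamma_2|\cdot\ltb 1 {\Gamma_1\times\Gamma_2}$ combined with the K\"unneth computation $\ltb 1 {\Gamma_1\times\Gamma_2}=0$ for infinite factors). The only differences are cosmetic --- you compute the K\"unneth term up front and add a cautionary remark about central quotients (which, for \emph{finite} normal kernels, is actually unnecessary, since the quotient formula holds without centrality) --- so nothing further is needed.
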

\begin{proof}
  Assuming that $\Gamma$ is presentable by a product we find two
  infinite commuting subgroups~$\Gamma_1$ and~$\Gamma_2$ of~$\Gamma$
  with the property that the multiplication homomorphism~$\varphi
  \colon \Gamma_1\times\Gamma_2 \longrightarrow \Gamma$ is surjective
  onto a finite index subgroup~$\Gamma' := \im \varphi$ in~$\Gamma$.

  As $L^2$-Betti numbers are multiplicative with respect to
  finite index subgroups~\cite[Theorem~6.54(6)]{lueckl2}, we have
  \[ \ltb 1 \Gamma = \frac 1 {[\Gamma : \Gamma']} \cdot \ltb 1 {\Gamma'}.
  \]
  In particular, it suffices to prove that~$\ltb 1 {\Gamma'} = 0$.

 We now divide the discussion into two cases:
  \begin{enumerate}
  \item \emph{The group~$\Gamma_1 \cap \Gamma_2$ is infinite.}
    Since~$\Gamma_1 \cap \Gamma_2$ is Abelian, it is amenable.  Thus,
    by the exact sequence~\eqref{eq:2}, the group~$\Gamma'$ has an
    infinite amenable normal subgroup, which implies that~$\ltb 1
    {\Gamma'} = 0$; see~\cite[Theorem~7.2 (1) and (2)]{lueckl2}.
  \item \emph{The group~$\Gamma_1 \cap \Gamma_2$ is finite.} In this
    case the exact sequence~\eqref{eq:1} implies that
      \[ \ltb 1 {\Gamma'} =     |\Gamma_1\cap \Gamma_2| 
                          \cdot \ltb 1 {\Gamma_1 \times \Gamma_2} \ ;
      \]
      compare~\cite[Exercise~7.7 and p.~534f]{lueckl2}.
      Moreover, $\ltb 1 {\Gamma_1 \times \Gamma_2}$ can be computed by
      the K\"unneth formula~\cite[Theorem~6.54(5),(8)]{lueckl2}
      \begin{align*}
            \ltb 1 {\Gamma_1 \times \Gamma_2}
        & = \ltb 0 {\Gamma_1} \cdot \ltb 1 {\Gamma_2}
          + \ltb 1 {\Gamma_1} \cdot \ltb 0 {\Gamma_2} \\
        & = \frac 1 {|\Gamma_1|} \cdot \ltb 1 {\Gamma_2}
          + \ltb 1 {\Gamma_1} \cdot \frac 1 {|\Gamma_2|} \\
        & = 0 \ ;
      \end{align*}
      where the last equality holds because the groups~$\Gamma_1$
      and~$\Gamma_2$ are infinite. Thus, it follows that~$\ltb 1
      {\Gamma'} = 0$.
  \end{enumerate}

  Hence, we obtain~$\ltb 1 \Gamma = 1/[\Gamma : \Gamma'] \cdot \ltb 1
  {\Gamma'} = 0$, as desired.
\end{proof}

\begin{rem}%[Higher $L^2$-Betti numbers]
  The vanishing result in Proposition~\ref{firstl2prop} does not
  extend to the higher $L^2$-Betti numbers.  However, the proof of
  case~(1) does extend. Therefore, we do get restrictions on the
  higher $L^2$-Betti numbers of groups that are presentable by
  products. For example, if $\Gamma$ is presentable by a product and
  $\ltb 2 {\Gamma}\neq 0$, then $\Gamma$ is a quotient of a direct
  product $\Gamma_1\times\Gamma_2$ by a finite central subgroup, and
  both $\ltb 1 {\Gamma_1}\neq 0$ and $\ltb 1 {\Gamma_2} \neq 0$.  
  This follows from the K\"unneth formula as in case~(2) of the proof
  above. 
\end{rem}

%%%%%%%%%%%%%%%%%%%%%%%%%%%%%%%%%%%%%%%%%%%%%%%%%%%%%%%%%%%%%%%%%%%
\subsection{Expensive groups}\label{ss:cost}

The concept of cost was introduced by Levitt and developed extensively
by Gaboriau~\cite{gaboriauInvent}.  It is a dynamical/ergodic
invariant of discrete groups.  We shall use the lecture notes of
Kechris and Miller~\cite{KM} as our reference for this concept and for
the properties we need.

The cost $\CC(\Gamma)$ of a countable group~$\Gamma$ is either
infinite or a non-negative real number. For finite groups one has
$\CC(\Gamma)= 1 - 1/\vert\Gamma\vert$, and for infinite groups one
has~$\CC(\Gamma)\geq 1$.

\begin{defn}[\cite{KM}] An infinite countable group~$\Gamma$ is {\it cheap} if
  $\CC(\Gamma)=1$; it is {\it expensive} if $\CC(\Gamma)>1$.
\end{defn}

If $\Gamma'\subset\Gamma$ is a subgroup of finite index, then
$\CC(\Gamma')-1 = [\Gamma\colon\Gamma']\cdot
(\CC(\Gamma)-1)$~\cite[Th\'eor\`eme~VI.1]{gaboriauInvent}\cite[Theorem~34.1]{KM}. Therefore,
the property of being cheap, or expensive, is invariant under passage
to finite index sub- or supergroups. Similarly, if $\Gamma'$ is a
finite normal subgroup of~$\Gamma$, then $\Gamma/\Gamma'$ is cheap if
and only if $\Gamma$ is
cheap~\cite[Th\'eor\`eme~VI.19]{gaboriauInvent}.

\begin{prop}
Expensive groups are not presentable by products.
\end{prop}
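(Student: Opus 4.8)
The plan is to mirror the structure of the proof of Proposition~\ref{firstl2prop}, using the known behaviour of cost under the two operations that appear in the exact sequences~\eqref{eq:2} and~\eqref{eq:1}, together with a product formula for cost. Suppose $\Gamma$ is presentable by a product. Then there are infinite commuting subgroups $\Gamma_1$, $\Gamma_2 \subset \Gamma$ whose multiplication homomorphism $\varphi\colon\Gamma_1\times\Gamma_2\longrightarrow\Gamma$ is surjective onto a finite index subgroup $\Gamma' := \im\varphi$. Since expensiveness is invariant under passage to finite index subgroups (as recorded in the excerpt), it suffices to show that $\Gamma'$ is cheap, i.e.\ that $\CC(\Gamma') = 1$.

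First I would split into the same two cases as before. If $\Gamma_1\cap\Gamma_2$ is infinite, then by the exact sequence~\eqref{eq:2} the group $\Gamma'$ contains an infinite amenable (indeed Abelian) normal subgroup. The relevant input here is Gaboriau's result that a group containing an infinite amenable normal subgroup has cost~$1$; this is the cost analogue of the $L^2$-Betti number vanishing used in case~(1) of Proposition~\ref{firstl2prop}, and I would cite it from~\cite{gaboriauInvent} or~\cite{KM}. If $\Gamma_1\cap\Gamma_2$ is finite, I would use exact sequence~\eqref{eq:1}: up to a finite central kernel, $\Gamma'$ is the direct product $\Gamma_1\times\Gamma_2$. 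The key fact is that a direct product of two infinite groups is cheap, because $\CC(\Gamma_1\times\Gamma_2)=1$ whenever both factors are infinite (again a theorem of Gaboriau). Passing through the finite kernel does not affect cheapness, by the invariance under finite normal subgroups quoted in the excerpt.

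The main obstacle is locating and citing the precise cost statements for the two geometric inputs, since these are where all the content sits: \textbf{(i)} that an infinite amenable normal subgroup forces cost~$1$, and \textbf{(ii)} that a product of two infinite groups has cost~$1$. Both are standard consequences of Gaboriau's theory (the product statement follows, for instance, from the fact that a group with an infinite normal subgroup of cost~$1$ is cheap, applied with $\Gamma_1$ as the normal factor once one knows infinite groups have finite cost contributions vanishing in products), so I would state them as the two cited lemmas and check that the finite-kernel and finite-index reductions genuinely reduce to them. One subtlety to verify is that the cost invariance statements require the groups to be infinite and countable, which holds here since all our groups are finitely generated and the factors $\Gamma_i$ are infinite by the definition of presentability by a product.

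Assembling these, in either case $\CC(\Gamma')=1$, so $\Gamma'$ is cheap, whence $\Gamma$ is cheap by finite-index invariance. This contradicts the assumption that $\Gamma$ is expensive, and therefore an expensive group cannot be presentable by a product.
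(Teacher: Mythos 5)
Your proof is correct and follows essentially the same route as the paper's: reduce to the image $\Gamma'$ of the multiplication map by finite-index invariance of expensiveness, then split according to whether $\Gamma_1\cap\Gamma_2$ is infinite (where the paper invokes Gaboriau's infinite-centre criterion while you invoke the marginally more general infinite-amenable-normal-subgroup criterion, both available in~\cite{gaboriauInvent} and~\cite{KM}) or finite (where both proofs quote that a product of two infinite groups is cheap and that cheapness is preserved under quotients by finite normal subgroups). One small caveat: your parenthetical sketch deriving the product statement from the normal-subgroup criterion ``applied with $\Gamma_1$ as the normal factor'' does not work as stated, since $\Gamma_1$ is an arbitrary infinite group and need not have fixed price~$1$; this is immaterial, however, because the product statement is precisely Gaboriau's Proposition~VI.23 (Theorem~33.3 in~\cite{KM}), which you correctly cite as a known theorem rather than relying on that heuristic.
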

\begin{proof}
  Suppose that $\Gamma$ is expensive and presentable by a
  product. Then, as the property of being expensive is invariant under
  passage to finite index subgroups, Lemma~\ref{l:finindexpres} allows
  us to assume that we have commuting subgroups
  $\Gamma_1,\Gamma_2\subset\Gamma$ such that the multiplication
  $\varphi\colon\Gamma_1\times\Gamma_2\longrightarrow\Gamma$ is
  surjective.  If $\Gamma_1\cap\Gamma_2$ is infinite, then $\Gamma$
  has infinite centre, and so $\Gamma$ is
  cheap~\cite[VI.26~(a)]{gaboriauInvent}\cite[Corollary~35.3]{KM}. If
  $\Gamma_1\cap\Gamma_2$ is finite, then in view of the exact
  sequence~(\ref{eq:1}) and the fact that the property of being cheap
  is invariant under passage to quotients by finite normal subgroups,
  it suffices to check that $\Gamma_1\times\Gamma_2$ is cheap. This
  last assertion is known to be true as soon as both $\Gamma_i$ are
  infinite~\cite[Proposition~VI.23]{gaboriauInvent}\cite[Theorem~33.3]{KM}.
\end{proof}

\subsection{Rank gradient}\label{s:RG}

The rank gradient was introduced by Lackenby~\cite{lack}, and has
recently been further developed by Ab\'ert and Nikolov~\cite{AN}.

For any finitely generated group $\Gamma$, let $d(\Gamma )$ be the
minimal number of generators.  Then the \emph{rank gradient} is defined to be
\[
\RG(\Gamma) = \inf_{\Gamma'\subset \Gamma}\frac{d(\Gamma')-1}{[\Gamma\colon\Gamma']} \ ,
\]
with the infimum taken over all finite index subgroups
$\Gamma'\subset\Gamma$. (This is sometimes called the absolute rank
gradient. Often only normal subgroups are considered, but this makes no
difference.)  Of course, if $\Gamma$ has few subgroups of finite index,
this definition may not be very meaningful.  In the extreme case when
$\Gamma$ has no subgroups of finite index at all, one clearly has
$\RG(\Gamma) = d(\Gamma)-1$. This explains why results about the rank
gradient often involve assumptions that ensure the existence of
sufficiently many finite index subgroups.

The basic properties of the rank gradient immediately give the following:
\begin{prop}\label{p:RG}
  If a residually finite group $\Gamma$ is presentable by a product,
  then $\RG(\Gamma)=0$.
\end{prop}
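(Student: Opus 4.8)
The plan is to reduce everything to the image $\Gamma'=\im\varphi$ and then invoke the standard vanishing properties of the rank gradient for products and for groups with large normal subgroups. First I would record the one elementary property of $\RG$ needed for the reduction: if $\Gamma'\subset\Gamma$ has finite index, then every finite index subgroup of $\Gamma'$ is a finite index subgroup of $\Gamma$ and the indices multiply, so directly from the definition $\RG(\Gamma)\le \RG(\Gamma')/[\Gamma:\Gamma']$. Since $d(H)\ge 1$ forces $\RG\ge 0$ always, it suffices to prove $\RG(\Gamma')=0$, where $\Gamma'=\im\varphi$ is the finite index image of the multiplication map $\varphi\colon\Gamma_1\times\Gamma_2\to\Gamma$ coming from presentability by a product. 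Note that $\Gamma'$ is finitely generated (finite index in $\Gamma$) and residually finite (a subgroup of $\Gamma$), that $\Gamma_1,\Gamma_2$ are commuting subgroups generating $\Gamma'$ and hence normal in $\Gamma'$, and that by Lemma~\ref{l:center} the intersection $\Gamma_1\cap\Gamma_2$ lies in the centre of $\Gamma'$.

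Next I would split into the same two cases as in Proposition~\ref{firstl2prop}. If $\Gamma_1\cap\Gamma_2$ is infinite, then $\Gamma'$ has infinite centre, hence an infinite \emph{amenable} (abelian) normal subgroup; the vanishing of the rank gradient in the presence of an infinite amenable normal subgroup is one of the basic properties established by Lackenby and by Ab\'ert--Nikolov, and gives $\RG(\Gamma')=0$. In the degenerate subcase where this central subgroup has finite index, $\Gamma'$ is virtually abelian and one concludes directly from $\RG(\Z^n)=0$ together with the finite index inequality above. If instead $\Gamma_1\cap\Gamma_2$ is finite, I would first replace $\Gamma_1,\Gamma_2$ by finitely generated subgroups: writing a finite generating set of $\Gamma'$ as $x_i=a_ib_i$ with $a_i\in\Gamma_1$, $b_i\in\Gamma_2$ and putting $\Gamma_1^0=\langle a_i\rangle$, $\Gamma_2^0=\langle b_i\rangle$ yields finitely generated commuting subgroups with $\Gamma_1^0\cdot\Gamma_2^0=\Gamma'$. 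A short computation shows $\Gamma_1\subseteq\Gamma_1^0\cdot(\Gamma_1\cap\Gamma_2)$, so finiteness of $\Gamma_1\cap\Gamma_2$ forces $\Gamma_1^0$ (and likewise $\Gamma_2^0$) to be infinite. Thus $\Gamma'$ is a quotient of the direct product $\Gamma_1^0\times\Gamma_2^0$ of two infinite finitely generated residually finite groups by a finite central subgroup.

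I would finish using the product vanishing: for infinite finitely generated residually finite $A,B$ one has $\RG(A\times B)=0$, proved by testing on the subgroups $A'\times B'$ with $A',B'$ of finite index, bounding $d(A'\times B')\le d(A')+d(B')$, applying the Nielsen--Schreier bound $d(A')-1\le[A:A'](d(A)-1)$, and letting $[A:A']$ and $[B:B']$ tend to infinity---residual finiteness and infiniteness being exactly what guarantees finite index subgroups of arbitrarily large index. Transferring this to the finite central quotient $\Gamma'$ (the test subgroups can be chosen to contain the finite kernel while still having index tending to infinity) gives $\RG(\Gamma')=0$, and hence $\RG(\Gamma)=0$.

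The main obstacle is the passage between $\Gamma'$ and the genuine direct product $\Gamma_1^0\times\Gamma_2^0$: the rank gradient has no clean multiplicativity under either central quotients or quotients by infinite normal subgroups, so the real work lies in (i) the reduction to finitely generated factors together with the verification that they remain infinite when $\Gamma_1\cap\Gamma_2$ is finite, and (ii) pinning down the precise hypotheses under which the cited vanishing theorems (product vanishing, and vanishing for infinite amenable normal subgroups) apply---in particular checking that residual finiteness supplies the finite index subgroups of unbounded index that all of these statements secretly require. This is also why the hypothesis of residual finiteness cannot be dropped.
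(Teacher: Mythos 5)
Your argument is correct, and its skeleton coincides with the paper's: reduce to the finite-index image $\Gamma'$, split according to whether $\Gamma_1\cap\Gamma_2$ is infinite or finite, and in the infinite case invoke the Ab\'ert--Nikolov vanishing result for finitely generated residually finite groups with infinite amenable (here: central) normal subgroups --- exactly the result the paper cites as \cite[Theorem~3]{AN}. The genuine difference is the finite-intersection case. The paper shows that $\Gamma_1$ itself is finitely generated (using that the commuting subgroups are normal in $\Gamma'$ and the two resulting exact sequences) and then quotes \cite[Prop.~13]{AN} applied to the extension $1\to\Gamma_1\to\Gamma'\to\Gamma_2/(\Gamma_1\cap\Gamma_2)\to 1$. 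You instead pass to finitely generated sub-factors $\Gamma_1^0,\Gamma_2^0$ that still generate $\Gamma'$, check that they remain infinite via $\Gamma_1\subseteq\Gamma_1^0\cdot(\Gamma_1\cap\Gamma_2)$, prove $\RG(A\times B)=0$ for infinite finitely generated residually finite $A,B$ by hand (your ``Nielsen--Schreier'' bound is really the Schreier index inequality, which does hold for arbitrary groups), and transfer through the finite central quotient provided by Lemma~\ref{l:center}. Your route buys self-containedness --- besides the infinite-centre case, its only inputs are Schreier's inequality, Lemma~\ref{l:center}, and the supply of large-index subgroups coming from residual finiteness --- and it runs parallel to the K\"unneth argument of Proposition~\ref{firstl2prop}; the paper's route is shorter and uses a more flexible tool, which it reuses for mapping class groups and for $\Aut(F_n)$, $\Out(F_n)$ (Propositions~\ref{p:MCG} and~\ref{p:Outcheap}), where no virtual product structure is available. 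The one step you state too quickly is the transfer: your test subgroups $A'\times B'$ need not contain the antidiagonal kernel $Z\cong\Gamma_1^0\cap\Gamma_2^0$. This is easily repaired, either as you indicate (take $A'=N_1\cdot(\Gamma_1^0\cap\Gamma_2^0)$ with $N_1\trianglelefteq\Gamma_1^0$ normal of large finite index, and similarly for $B'$), or more simply by pushing forward: the image in $\Gamma'$ of any finite index subgroup $H\le\Gamma_1^0\times\Gamma_2^0$ under the multiplication map has rank at most $d(H)$ and index at least $[\Gamma_1^0\times\Gamma_2^0:H]/|Z|$, which suffices to conclude $\RG(\Gamma')=0$.
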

\begin{proof}
  Suppose $\Gamma$ is presentable by a product. Then there are
  infinite commuting subgroups $\Gamma_1, \Gamma_2\subset\Gamma$ such
  that the multiplication map
  $\varphi\colon\Gamma_1\times\Gamma_2\longrightarrow\Gamma$ is
  surjective onto a finite index subgroup~$\Gamma'\subset\Gamma$. It
  suffices to prove~$\RG(\Gamma')=0$.

  If $\Gamma_1\cap\Gamma_2$ is infinite, then $\Gamma'$ has infinite
  centre, and so its rank gradient vanishes~\cite[Theorem~3]{AN}. 

  If $\Gamma_1\cap\Gamma_2$ is finite, we argue as follows.  By
  assumption, both $\Gamma_i$ are infinite. As they are subgroups of a
  residually finite group, they are themselves residually finite. As
  the two groups commute, they are both normal in~$\Gamma'$, and we
  have the two exact sequences
  \begin{align*}
    1\longrightarrow \Gamma_1
     \longrightarrow \Gamma' 
     \longrightarrow \Gamma_2/(\Gamma_1\cap\Gamma_2) 
     \longrightarrow 1 \\
    1\longrightarrow \Gamma_2
     \longrightarrow \Gamma' 
     \longrightarrow \Gamma_1/(\Gamma_1\cap\Gamma_2) 
     \longrightarrow 1 \text{\makebox[0pt][l]{\ ;}}
  \end{align*}
  the epimorphisms are given by composing the isomorphism~$\Gamma'
  \cong \Gamma_1\times\Gamma_2/(\Gamma_1 \cap \Gamma_2)$ with the
  homomorphisms induced by the projection
  from~$\Gamma_1\times\Gamma_2$ onto its factors.  The lower sequence
  shows that $\Gamma_1/(\Gamma_1\cap\Gamma_2)$ is finitely generated,
  and since $\Gamma_1\cap\Gamma_2$ is finite, we conclude that
  $\Gamma_1$ is itself finitely generated (recall that $\Gamma$ (and
  hence~$\Gamma'$) is finitely generated by our standing
  convention). Now we can apply a result of Ab\'ert and
  Nikolov~\cite[Prop.~13]{AN} to the first extension above to conclude the
  vanishing of $\RG(\Gamma')$. The subgroup $\Gamma_1$ is finitely
  generated, and the quotient $\Gamma_2/(\Gamma_1\cap\Gamma_2)$ has
  subgroups of arbitrarily large index since $\Gamma_2$ is infinite
  and residually finite, and $\Gamma_1\cap\Gamma_2$ is finite.
  This completes the proof.
\end{proof}

\begin{ex}%\label{ex:H}
  Let $\Gamma$ be a finitely generated infinite simple group. By classical 
  work of Higman and Thompson, such groups exist, and may even be chosen to be finitely
  presentable. Then $\Gamma\times\Gamma$ is
  presentable by a product and has positive rank gradient since
  it has no proper subgroups of finite index.

  Note that $\Gamma$ itself is not presentable by a product since it has no
  non-trivial normal subgroups.
\end{ex}

\subsection{The relation between the first $L^2$-Betti number, cost, and the rank gradient}\label{ss:comparison}

There is a remarkable connection between cost and the first $L^2$-Betti number, which
shows that the obstruction to presentability by a product coming from the first $L^2$-Betti number is a special case 
of the obstruction provided by the cost. 
\begin{thm}[Gaboriau~\protect{\cite[Corollaire~3.23]{gaboriauIHES}}]\label{t:gab}
Every infinite group $\Gamma$ satisfies~$\CC(\Gamma)-1\geq \ltb 1 {\Gamma}$. In particular, groups with 
positive first $L^2$-Betti numbers are expensive.
\end{thm}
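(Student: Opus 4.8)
The plan is to route the inequality through Gaboriau's theory of $L^2$-invariants for measured equivalence relations, since cost and the first $L^2$-Betti number both admit descriptions in terms of the orbit equivalence relation of a free probability-measure-preserving (pmp) action of $\Gamma$. Concretely, I would fix such an action on a standard probability space $(X,\mu)$ with $\mu(X)=1$ and let $\mathcal R$ denote the associated orbit equivalence relation. The first step is to invoke Gaboriau's fundamental identification of the $L^2$-Betti numbers of $\mathcal R$ with those of the group, so that $\ltb i {\Gamma}=\beta_i^{(2)}(\mathcal R)$ for all~$i$; this reduces the statement to the purely relation-theoretic inequality $\CC(\mathcal R)-1\geq\beta_1^{(2)}(\mathcal R)$, which upon taking the infimum over all free pmp actions of~$\Gamma$ yields $\CC(\Gamma)-1\geq\ltb 1 {\Gamma}$, because the left-hand side is the infimum of the costs $\CC(\mathcal R)$ while the right-hand side is independent of the action.

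The heart of the argument is a von Neumann dimension count for a graphing. Given any graphing $\Phi=\{\phi_i\}$ that generates $\mathcal R$, I would form the associated field of graphs, whose vertex set over a.e.\ point is the orbit and whose edges are prescribed by the partial isomorphisms $\phi_i$. This produces an $L^2$-chain complex $C_1\xrightarrow{\partial}C_0$ over the groupoid von Neumann algebra of $\mathcal R$, with $\dim_{\mathrm{vN}}C_0=\mu(X)=1$ and $\dim_{\mathrm{vN}}C_1=\sum_i\mu(\operatorname{dom}\phi_i)=\CC(\Phi)$. Additivity of the von Neumann dimension along the exact sequences $0\to\ker\partial\to C_1\to\operatorname{im}\partial\to0$ and $0\to\operatorname{im}\partial\to C_0\to\operatorname{coker}\partial\to0$ gives at once
\[
  \dim_{\mathrm{vN}}\ker\partial-\dim_{\mathrm{vN}}\operatorname{coker}\partial
  =\dim_{\mathrm{vN}}C_1-\dim_{\mathrm{vN}}C_0=\CC(\Phi)-1.
\]
Since the graphing generates $\mathcal R$, the cokernel computes $\beta_0^{(2)}(\mathcal R)$, which vanishes because $\Gamma$ is infinite; and because the field of graphs is only the $1$-skeleton of a resolution, its first $L^2$-homology $\ker\partial$ dominates the genuine first $L^2$-Betti number, that is, $\beta_1^{(2)}(\mathcal R)\leq\dim_{\mathrm{vN}}\ker\partial$ (adjoining $2$-cells can only kill $1$-cycles). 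Combining these two facts turns the displayed identity into $\beta_1^{(2)}(\mathcal R)\leq\CC(\Phi)-1$, and passing to the infimum over graphings gives $\beta_1^{(2)}(\mathcal R)\leq\CC(\mathcal R)-1$, as required.

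The main obstacle is not the dimension count, which is essentially a rank--nullity argument once the framework is in place, but rather the construction of that framework: defining $L^2$-Betti numbers for measured equivalence relations, establishing the additivity and continuity properties of the von Neumann dimension in this setting, and above all proving the orbit-equivalence invariance $\ltb i {\Gamma}=\beta_i^{(2)}(\mathcal R)$ used in the first step. That invariance is itself one of Gaboriau's deepest results and requires comparing simplicial resolutions of the group and of the relation. A secondary subtlety is the strict inequality in general versus equality in the treeable case: when $\Phi$ can be chosen to be a treeing, the field of graphs is acyclic in degree $\geq 1$, so $\ker\partial$ actually realises $\beta_1^{(2)}(\mathcal R)$ and the estimate becomes the identity $\CC(\Gamma)-1=\ltb 1 {\Gamma}$; for non-treeable relations one must settle for the upper bound, which is precisely why only an inequality is asserted. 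The final sentence of the theorem is then immediate: if $\ltb 1 {\Gamma}>0$ then $\CC(\Gamma)>1$, i.e.\ $\Gamma$ is expensive.
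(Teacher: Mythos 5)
The first thing to say is that the paper contains no proof of this statement at all: Theorem~\ref{t:gab} is quoted as a black box, attributed to Gaboriau~\cite[Corollaire~3.23]{gaboriauIHES}, and is used in Section~\ref{ss:comparison} only to explain that the cost obstruction subsumes the $L^2$-Betti number obstruction. So there is no ``paper's own proof'' to compare you against; the only meaningful benchmark is Gaboriau's argument itself. Measured against that, your sketch reproduces the correct architecture: fix a free pmp action of~$\Gamma$, identify $b_1^{(2)}(\Gamma)$ with $\beta_1^{(2)}(\mathcal R)$ of the orbit relation, and for each generating graphing~$\Phi$ run the rank--nullity count $\dim_{\mathrm{vN}}\ker\partial-\dim_{\mathrm{vN}}\mathrm{coker}\,\partial=\mathcal C(\Phi)-1$, combined with $\beta_0^{(2)}(\mathcal R)=0$ (infinite orbits) and the monotonicity $\beta_1^{(2)}(\mathcal R)\leq\dim_{\mathrm{vN}}\ker\partial$, before taking infima over graphings and over actions.

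However, you should be explicit that what you have is a roadmap, not a proof, and the gap is exactly where you locate it: the two inputs you defer --- the invariance $b_i^{(2)}(\Gamma)=\beta_i^{(2)}(\mathcal R)$ for free pmp actions, and the inequality $\beta_1^{(2)}(\mathcal R)\leq\beta_1^{(2)}(\Sigma)$ for a generating field of connected graphs~$\Sigma$ --- are precisely the main theorems of Gaboriau's paper, and your parenthetical justification ``adjoining $2$-cells can only kill $1$-cycles'' is a heuristic that requires genuine work (L\"uck's dimension theory over the von Neumann algebra of the relation, comparison of simplicial resolutions) to make rigorous in the measured setting. One concrete slip in your final remark: when $\Phi$ is a treeing, the fibers are trees, hence \emph{contractible}, so the field of graphs is a simplicial resolution and therefore computes $\beta_1^{(2)}(\mathcal R)$ exactly; but it is emphatically not ``acyclic in degree~$\geq 1$'' in the $\ell^2$ sense --- for a free action of~$F_n$ with its standard treeing, $\ker\partial$ has von Neumann dimension $n-1\neq 0$, because $\ell^2$-cycles on an infinite tree need not be finitely supported. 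The equality in the treeable case thus comes from contractibility of the fibers together with the fact that treeings realize the cost, not from vanishing of~$\ker\partial$.
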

It is unknown whether this inequality is ever strict.
For residually finite groups one also has:
\begin{thm}[Ab\'ert and Nikolov~\protect{\cite[Theorem~1]{AN}}]\label{t:AN}
If $\Gamma$ is residually finite, then $\RG(\Gamma)\geq\CC(\Gamma)-1$, with equality if $\Gamma$ has 
fixed price.
\end{thm}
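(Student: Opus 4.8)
The plan is to realise the cost as the cost of a single, explicitly chosen probability-measure-preserving action of~$\Gamma$ --- the left-translation action on the boundary of a coset tree --- and then to play off the cost of this action for~$\Gamma$ against its cost for the finite index subgroups of~$\Gamma$, using the finite index formula for cost together with the trivial bound $\CC \leq d$. Concretely, I would first invoke residual finiteness to fix a chain $\Gamma = \Gamma_0 \rhd \Gamma_1 \rhd \cdots$ of finite index normal subgroups with $\bigcap_n \Gamma_n = 1$, and form the associated profinite action $\Gamma \curvearrowright X := \varprojlim_n \Gamma/\Gamma_n$, equipped with the normalised Haar probability measure. Because the chain has trivial intersection, the kernel of this action is trivial, so $\Gamma \curvearrowright X$ is essentially free and therefore $\CC(\Gamma) \leq \operatorname{cost}(\Gamma \curvearrowright X)$.

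For the main inequality $\RG(\Gamma) \geq \CC(\Gamma) - 1$, I would observe that each $\Gamma_n$ acts on the same space~$X$, that its orbit relation is a subrelation of the orbit relation of~$\Gamma$ of index $[\Gamma : \Gamma_n]$, and that Gaboriau's finite index formula then gives $\operatorname{cost}(\Gamma_n \curvearrowright X) - 1 = [\Gamma : \Gamma_n]\cdot(\operatorname{cost}(\Gamma \curvearrowright X) - 1)$. On the other hand, any generating set of~$\Gamma_n$ produces a graphing of $\Gamma_n \curvearrowright X$ whose cost equals the number of generators, so $\operatorname{cost}(\Gamma_n \curvearrowright X) \leq d(\Gamma_n)$. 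Combining these two facts yields $\operatorname{cost}(\Gamma \curvearrowright X) - 1 \leq (d(\Gamma_n) - 1)/[\Gamma : \Gamma_n]$. Since an arbitrary finite index normal subgroup~$N$ can be slipped in as a term of such a chain (intersect a trivial-intersection chain with~$N$), this bound holds with~$N$ in place of~$\Gamma_n$; taking the infimum over all finite index normal subgroups --- and recalling from the discussion preceding this statement that restricting to normal subgroups does not change~$\RG$ --- gives $\CC(\Gamma) - 1 \leq \RG(\Gamma)$.

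For the equality under the fixed price hypothesis, I would use that fixed price forces \emph{every} essentially free action, and in particular the profinite action $\Gamma \curvearrowright X$, to have cost exactly $\CC(\Gamma)$. It then remains to establish the matching lower bound $\operatorname{cost}(\Gamma \curvearrowright X) \geq 1 + \RG(\Gamma, (\Gamma_n))$, that is, that $d(\Gamma_n) - 1$ is, along the chain, asymptotically no larger than $\operatorname{cost}(\Gamma_n \curvearrowright X) - 1$. Granting this, the finite index formula and fixed price would turn the chain of inequalities around: $\CC(\Gamma) - 1 = \operatorname{cost}(\Gamma \curvearrowright X) - 1 = \RG(\Gamma, (\Gamma_n)) \geq \RG(\Gamma)$, which together with the first part gives equality.

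The hard part is precisely this lower bound on the cost of the profinite action in terms of the \emph{ranks} of the~$\Gamma_n$. The trivial direction of the cost/rank comparison is $\CC \leq d$; the reverse estimate requires showing that an economical graphing of the orbit relation, when read off at the finite levels $\Gamma/\Gamma_n$ of the coset tree, can be converted into a generating set of~$\Gamma_n$ of comparable size, so that no graphing can be much cheaper than the Schreier generators permit. Cost lower bounds are the notoriously difficult direction of the theory, and this combinatorial and topological analysis of graphings on the coset tree --- comparing them against minimal generating sets of the finite index subgroups --- is where I expect the main obstacle to lie.
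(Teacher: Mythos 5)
The paper gives no proof of this statement at all---it is quoted verbatim from Ab\'ert--Nikolov \cite[Theorem~1]{AN}---so the only meaningful benchmark is that source, and your outline does follow its strategy (profinite action on the boundary of the coset tree, Gaboriau's induction formula, the bound $\operatorname{cost}\leq d$). Your argument for the inequality $\RG(\Gamma)\geq\CC(\Gamma)-1$ is essentially complete and correct: the profinite action attached to a normal chain with trivial intersection is free---though your justification is slightly off, since a trivial kernel does not in general imply essential freeness; the right statement is that by normality the stabiliser of \emph{every} point of $X$ equals $\bigcap_n\Gamma_n=\{1\}$---hence $\CC(\Gamma)\leq\operatorname{cost}(\Gamma\curvearrowright X)$; the index formula $\operatorname{cost}(\Gamma_n\curvearrowright X)-1=[\Gamma:\Gamma_n]\cdot\bigl(\operatorname{cost}(\Gamma\curvearrowright X)-1\bigr)$ does hold (it follows from Gaboriau's induction formula applied to the fibre of $X\to\Gamma/\Gamma_n$ over the trivial coset, using normality again to see that the $\Gamma$-relation restricted to that fibre is exactly the $\Gamma_n$-relation); the trick of splicing an arbitrary finite index normal subgroup $N$ into a trivial-intersection chain is valid; and the reduction of $\RG$ to normal subgroups is the standard Reidemeister--Schreier estimate, which the paper itself records. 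So the inequality half of the theorem stands.

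The equality under fixed price, however, is not proved in your proposal: everything hinges on the lower bound $\operatorname{cost}(\Gamma\curvearrowright X)-1\geq\RG\bigl(\Gamma,(\Gamma_n)\bigr)$, which you state, correctly identify as the hard part, and then assume (``granting this''). That bound is precisely the substantive content of Ab\'ert--Nikolov's Theorem~1, not a routine lemma: one must show that any graphing of the profinite orbit relation of cost $1+c$ can be replaced, at an arbitrarily small increase of cost, by a graphing whose partial isomorphisms are restrictions of group translations to cylinder sets at some finite level $\Gamma/\Gamma_n$ of the coset tree, and then run a Schreier-type counting argument converting such a graphing into a generating set of $\Gamma_n$ of cardinality roughly $c\cdot[\Gamma:\Gamma_n]+1$. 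Nothing in your sketch substitutes for this approximation-and-counting step, and without it the chain $\CC(\Gamma)-1=\operatorname{cost}(\Gamma\curvearrowright X)-1=\RG\bigl(\Gamma,(\Gamma_n)\bigr)\geq\RG(\Gamma)$ collapses. As written, you have established $\RG(\Gamma)\geq\CC(\Gamma)-1$, but the fixed-price equality remains unproven.
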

We refer the reader to the papers by Ab\'ert and Nikolov~\cite{AN} and by Osin~\cite{osin} for further discussions
of these results and their relations to open problems and conjectures in group theory and in three-dimensional 
topology.

For residually finite groups, the positivity of the rank gradient is
the strongest one of the three obstructions to presentability by
products discussed in this section.  A large class of non-examples for
this obstruction comes from the following observation, generalising a
vanishing result for $L^2$-Betti numbers~\cite[Theorem~1.39]{lueckl2}.

\begin{lem}[Lackenby~\protect{\cite[p.~365/366]{lack}}]\label{l:lack}
  The rank gradient vanishes for fundamental groups of mapping tori.
%  (over spaces with finitely generated fundamental group).
\end{lem}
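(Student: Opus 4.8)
The plan is to exploit the defining feature of a mapping torus: its fundamental group surjects onto~$\Z$ with finitely generated kernel. Concretely, if $M_f$ is the mapping torus of a homeomorphism $f\colon X\longrightarrow X$ of a compact manifold (or finite complex)~$X$, then $\Gamma := \pi_1(M_f)$ is the semidirect product $\pi_1(X)\rtimes_{f_*}\Z$, and the projection $M_f\longrightarrow S^1$ induces an epimorphism $p\colon\Gamma\longrightarrow\Z$ whose kernel is the image~$K$ of~$\pi_1(X)$. First I would record that $K$ is finitely generated, say $d(K)=k$, because $X$ is a finite complex, and fix an element $t\in\Gamma$ with $p(t)=1$.

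The heart of the argument is a tower of finite index subgroups of uniformly bounded rank. For each $n\geq 1$ I would set $\Gamma_n:=p^{-1}(n\Z)$, which has index~$n$ in~$\Gamma$. The key observation is that $\Gamma_n=\inangles{K,\,t^n}$: any $\gamma\in\Gamma_n$ satisfies $p(\gamma)=nm$ for some $m\in\Z$, so $\gamma\cdot t^{-nm}\in K$ and hence $\gamma\in\inangles{K,t^n}$. Consequently $d(\Gamma_n)\leq d(K)+1=k+1$, a bound independent of~$n$. This is the step on which everything hinges, and it is also where the hypothesis genuinely enters: the rank stays bounded only because the kernel~$K$ is finitely generated. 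The \emph{main obstacle} is precisely to guarantee this finite generation together with the generation claim $\Gamma_n=\inangles{K,t^n}$; the same recipe applied to an ascending HNN extension with infinitely generated kernel would break down, so it is essential that the monodromy~$f$ be a genuine homeomorphism and the fibre a finite complex.

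With the tower in hand the conclusion is immediate. Since $\Gamma$ is infinite, every finite index subgroup is infinite and hence needs at least one generator, so $\RG(\Gamma)\geq 0$. On the other hand, evaluating the defining infimum on the subgroups~$\Gamma_n$ gives
\[
  \RG(\Gamma)\ \leq\ \frac{d(\Gamma_n)-1}{[\Gamma\colon\Gamma_n]}\ \leq\ \frac{k}{n}
\]
for every~$n$, and letting $n\to\infty$ forces $\RG(\Gamma)=0$. I expect no serious difficulty beyond the verifications flagged above; once the generation claim and the finite generation of~$K$ are in place, the estimate on the ratios is routine and the vanishing follows at once.
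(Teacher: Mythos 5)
Your proof is correct, and it is exactly the argument of Lackenby that the paper cites (the lemma is stated with a reference to \cite[p.~365/366]{lack} rather than proved in the text): pass to the cyclic subgroups $p^{-1}(n\Z)$, which are generated by the finitely generated fibre group together with $t^n$, so their ranks are uniformly bounded while their indices grow. All the verifications you flag (finite generation of the kernel, the generation claim for $\Gamma_n$, and the index computation) are sound, so nothing is missing.
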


\begin{ex}\label{ex:hyper}
By Thurston's theorem, the mapping torus of a pseudo-Anosov diffeomorphism 
of a surface of genus $\geq 2$ is a hyperbolic three-manifold. Its
  fundamental group is residually finite, therefore the vanishing of
  its rank gradient given by Lemma~\ref{l:lack} implies that this
  hyperbolic group is cheap.
  % and has vanishing first $L^2$-Betti number. 
\end{ex}

Another non-example is the following:
\begin{prop}\label{p:MCG}
Mapping class groups of surfaces of genus $\geq 3$ have vanishing rank gradient.
\end{prop}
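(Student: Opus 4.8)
The plan is to exhibit $\mathrm{Mod}(\Sigma_g)$, the mapping class group of a closed oriented surface $\Sigma_g$ of genus $g$, as the total group of a group extension to which the Ab\'ert--Nikolov vanishing criterion already used in the proof of Proposition~\ref{p:RG} applies directly, so that no passage to a finite-index subgroup is even necessary. Recall from that proof that \cite[Prop.~13]{AN} yields $\RG(\Gamma)=0$ whenever $\Gamma$ fits into a short exact sequence $1\to N\to\Gamma\to Q\to 1$ with $N$ finitely generated and normal and with $Q$ admitting subgroups of arbitrarily large finite index. The natural such sequence here is the Torelli sequence: the action of the mapping class group on $H_1(\Sigma_g;\Z)\cong\Z^{2g}$, preserving the intersection form, gives the symplectic representation
\[ 1\longrightarrow \mathcal I_g \longrightarrow \mathrm{Mod}(\Sigma_g)\longrightarrow \Sp(2g,\Z)\longrightarrow 1, \]
whose kernel $\mathcal I_g$ is by definition the Torelli group and whose image is all of $\Sp(2g,\Z)$.

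Granting this, the verification of the hypotheses is the second step. The quotient $Q=\Sp(2g,\Z)$ is an infinite linear group, hence residually finite by Malcev's theorem, so it has finite-index subgroups of arbitrarily large index (for instance its congruence subgroups); this is exactly the condition on $Q$ required by \cite[Prop.~13]{AN}. For the kernel $N=\mathcal I_g$ I would invoke Johnson's theorem that the Torelli group of a closed surface of genus $g\geq 3$ is \emph{finitely generated}. Since the Torelli group is manifestly infinite, all hypotheses are met, and \cite[Prop.~13]{AN} applied to the sequence above gives $\RG(\mathrm{Mod}(\Sigma_g))=0$ at once.

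The main obstacle, and the sole reason for the genus restriction $g\geq 3$, is precisely the finite generation of $\mathcal I_g$. This is a deep result of Johnson, and it genuinely fails in lower genus: for $g=2$ the Torelli group is an infinitely generated free group by a theorem of Mess, so the extension-theoretic argument breaks down and the present method says nothing about $\mathrm{Mod}(\Sigma_2)$. Everything else in the argument---surjectivity of the symplectic representation onto $\Sp(2g,\Z)$, residual finiteness of $\Sp(2g,\Z)$, and the existence of congruence subgroups of arbitrarily large index---is classical, so once Johnson's theorem is cited the proof is immediate. It is worth noting that this conclusion is consistent with the discussion of Example~\ref{ex:hyper}: both show that for residually finite groups the rank gradient, despite being the strongest of the three obstructions considered in this section, fails to detect mapping class groups, even though these \emph{are} not presentable by products by Theorem~\ref{t:alg}(MCG).
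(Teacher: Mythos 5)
Your proof is correct and is essentially identical to the paper's: both apply the Ab\'ert--Nikolov extension criterion \cite[Prop.~13]{AN} to the Torelli sequence $1\to\mathcal I_g\to\Gamma_g\to\Sp(2g,\Z)\to 1$, citing Johnson's finite generation of $\mathcal I_g$ for $g\geq 3$ and the abundance of finite-index subgroups of $\Sp(2g,\Z)$. The only difference is your added (correct) context about why the argument fails in genus $2$ via Mess's theorem.
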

Since mapping class groups are residually finite by a result of Grossman~\cite{gross}, this Proposition
implies, via Theorem~\ref{t:AN}, that mapping class groups are cheap. This last assertion was previously known
by a recent result due to Kida~\cite{Kida}.
\begin{proof}
Let $\Gamma_g$ be the mapping class group of a closed oriented surface of genus~$g\geq 3$.
We can apply a result of Ab\'ert and Nikolov~\cite[Proposition~13]{AN} to the extension 
\[
1\longrightarrow \mathcal{I}_g \longrightarrow \Gamma_g\longrightarrow \Sp(2g;\Z) \longrightarrow 1 
\]
to conclude $\RG(\Gamma_g)=0$. Here $\mathcal{I}_g$ is the Torelli
group, which is finitely generated for genus $\geq 3$ by a result of
Johnson~\cite{Johnson}. Clearly the symplectic group has the required
property to admit finite quotients of arbitrarily large order.
\end{proof}

%%%%%%%%%%%%%%%%%%%%%%%%%%%%%%%%%%%%%%%%%%%%%%%%%%%%%%%%%%%%%%%%%%%
\section{The Powers property}\label{s:Powers}

For a countable group $\Gamma$, let $\Cr(\Gamma)$ denote its reduced $C^*$-algebra.
One says that the group is \emph{$C^*$-simple} if $\Cr(\Gamma)$ has no proper two-sided ideals.
It is a now classical result of Powers that the free group on two generators is $C^*$-simple.
De~la~Harpe~\cite{Harpe1} extracted from Powers's proof a combinatorial property of groups
that ensures $C^*$-simplicity. He calls this property the \emph{Powers property}, and calls groups that
have the property Powers groups. We refer to his recent survey~\cite{Harpe2} for the definitions
and an extensive bibliography of results on the class $\mathcal{P}$ of Powers groups.

Basic results about Powers groups mentioned in~\cite{Harpe2} imply the following.
\begin{prop}\label{p:Powers}
A group with the Powers property is not presentable by products.
\end{prop}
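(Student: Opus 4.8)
The plan is to argue by contradiction, running the same product analysis used for the other obstructions in this section and then feeding the resulting product structure into the combinatorial definition of the Powers property. So suppose $\Gamma$ is a Powers group that is presentable by a product. As in the preliminaries I may assume there are infinite commuting subgroups $\Gamma_1,\Gamma_2\subset\Gamma$ whose multiplication homomorphism $\varphi\colon\Gamma_1\times\Gamma_2\longrightarrow\Gamma$ is surjective onto a finite index subgroup $\Gamma'=\im\varphi$, together with the two exact sequences~\eqref{eq:2} and~\eqref{eq:1} relating $\Gamma'$, the product $\Gamma_1\times\Gamma_2$, and the central subgroup $\Gamma_1\cap\Gamma_2$ supplied by Lemma~\ref{l:center}.

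I would then split into the same two cases as in the proofs of Propositions~\ref{firstl2prop} and~\ref{p:RG}. If $\Gamma_1\cap\Gamma_2$ is infinite, then by Lemma~\ref{l:center} the finite index subgroup $\Gamma'$ has infinite centre. Since a Powers group is in particular $C^*$-simple, hence ICC and of trivial centre, this case I expect to dispatch using only the ICC side of the theory, provided one knows that having infinite centre in a finite index subgroup is incompatible with membership in the class $\mathcal P$. If instead $\Gamma_1\cap\Gamma_2$ is finite, then~\eqref{eq:1} exhibits $\Gamma'$ as the quotient of the genuine direct product $\Gamma_1\times\Gamma_2$ of two infinite groups by a finite central subgroup.

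The heart of the matter, and the step I expect to be the main obstacle, is the finite intersection case: I must show that no group commensurable with a nontrivial direct product of two infinite groups can be a Powers group. This is precisely where the combinatorial content of the Powers property enters, rather than the weaker property of $C^*$-simplicity: note that $C^*$-simplicity alone cannot suffice, since a direct product of two $C^*$-simple groups is again $C^*$-simple, so that some groups presentable by products (for example products of free groups) are $C^*$-simple. Concretely, in a direct product every nontrivial element of one factor has the whole infinite other factor in its centraliser, and one checks that the existence of such an infinite commuting pair is incompatible with the ping-pong/paradoxical partitions $\Gamma=C\sqcup D$ demanded by the Powers condition; this incompatibility, together with its stability under passage to finite central quotients, is exactly the kind of basic result about Powers groups recorded in de~la~Harpe's survey~\cite{Harpe2} that I would cite.

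Finally, to turn these two case analyses into a contradiction with the hypothesis on $\Gamma$ itself, I would invoke the permanence of the Powers property under passage to finite index subgroups (again extracted from~\cite{Harpe2}, after replacing $\Gamma'$ by its normal core in $\Gamma$ if necessary): then $\Gamma'$ would itself belong to $\mathcal P$, contradicting the conclusions of both cases above and showing that $\Gamma$ cannot be presentable by a product. The delicate points, the ones genuinely carried by~\cite{Harpe2} rather than by the formal product analysis, are therefore the two permanence/incompatibility statements, namely that $\mathcal P$ is closed under finite index subgroups and that $\mathcal P$ excludes finite central quotients of products of infinite groups.
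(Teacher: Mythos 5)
Your proposal follows essentially the same route as the paper's proof: both arguments rest on exactly three external inputs --- permanence of the Powers property under passage to finite index subgroups (\cite[Proposition~1(c)]{Harpe1}), the fact that Powers groups are $C^*$-simple and therefore have trivial centre, and Promislow's theorem that a non-trivial direct product is never a Powers group (\cite{Pr}, \cite[Proposition~14(i)]{Harpe2}). The one place where you deviate is the case distinction on $\Gamma_1\cap\Gamma_2$, and the deviation costs you something: in your finite-intersection case you ask for a strengthened exclusion covering \emph{finite central quotients} of products of infinite groups, and you defer that statement to de~la~Harpe's survey, where it is not recorded in that form. The detour is unnecessary, and the paper's proof shows how to avoid it. Once you have passed to $\Gamma'=\im\varphi$ and know that $\Gamma'$ is itself a Powers group, its centre is trivial; since Lemma~\ref{l:center}(1) places $\Gamma_1\cap\Gamma_2$ inside the centre of $\Gamma'$, that intersection is trivial --- not merely finite --- so the exact sequence~\eqref{eq:1} makes $\varphi$ an isomorphism and $\Gamma_1\times\Gamma_2$ is literally a Powers group, contradicting Promislow's result as stated. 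In other words, the trivial-centre fact you invoke in your infinite-intersection case already collapses both of your cases into one, and no permanence under finite central quotients is needed; with that adjustment your argument is complete and coincides with the paper's.
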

\begin{proof}
First of all, if $\Gamma$ is a Powers group, so is every finite index subgroup~\cite[Proposition~1(c)]{Harpe1}.
Therefore, if we have subgroups $\Gamma_1$, $\Gamma_2\subset\Gamma$ for which the multiplication
is surjective onto a finite index subgroup, we just pass to this subgroup.
Now a $C^*$-simple group does not contain any amenable normal subgroup, in particular it has 
trivial centre. Therefore, by the discussion in Section~\ref{s:prelim}, we conclude that $\Gamma_1\times\Gamma_2$
is a Powers group. But this contradicts a result of Promislow~\cite{Pr}; compare also~\cite[Proposition~14(i)]{Harpe2}.
\end{proof}
This result is true for Powers groups only. The less restrictive property of $C^*$-simplicity is preserved under taking direct products, 
and so cannot be an obstruction against presentability by products. The same remark applies to the ``weak Powers
property'' discussed in~\cite{Harpe2}.

The fact that $C^*$-simple groups, and therefore Powers groups, have trivial centre, implies that many
standard examples cannot be Powers groups.
\begin{ex}\label{ex:SL2}
The centre of $\SL(2,\Z)$ has order $2$. Therefore, this is not a Powers group. It follows that hyperbolic
groups, or groups with infinitely many ends, are not always Powers groups.
\end{ex}

\begin{ex}\label{ex:MCG2}
The mapping class group of a closed genus $2$ surface also has centre of order $2$, generated by the hyperelliptic involution. 
It follows that this mapping class group is not a Powers groups. Similarly, in higher genus the hyperelliptic mapping class group
is not a Powers group.
\end{ex}

Nevertheless, the class $\mathcal{P}$ of Powers groups contains, among others, the following groups:
\begin{enumerate}
\item torsion-free hyperbolic groups that are not virtually cyclic (de~la~Harpe \cite{Harpe1,Harpe2}),
\item free products $\Delta_1\star\Delta_2$ with $\vert\Delta_i\vert>i$ (de~la~Harpe~\cite[Proposition~8]{Harpe1}),
\item mapping class groups of surfaces of genus at least~$3$ (Bridson--de~la~Harpe \cite[Theorem~2.2]{BridsonHarpe}).
\end{enumerate}

%%%%%%%%%%%%%%%%%%%%%%%%%%%%%%%%%%%%%%%%%%%%%%%%%%%%%%%%%%%%%%%%%%%
\section{Bounded cohomology}\label{s:bc}

Monod and Shalom~\cite{MSjdg,MSannals} introduced and studied the
following class of groups; compare also the paper by Mineyev, Monod
and Shalom~\cite{MMS}. (A detailed treatment of bounded
cohomology~$H^*_b$ of groups is given in Monod's book~\cite{monod}).

\begin{defn}[\cite{MSannals}]
  A countable group $\Gamma$ is in $\Creg$ if $H^2_b(\Gamma;\ell^2(\Gamma))\neq 0$.
\end{defn}
The class $\Creg$ contains, among others, the following groups:
\begin{enumerate}
\item hyperbolic groups that are not virtually cyclic (Mineyev--Monod--Sha\-lom~\cite[Theorem~3]{MMS}; see also~\cite{MSjdg,Ham}),
\item groups with infinitely many ends (Monod--Shalom~\cite[Corollary~7.9]{MSjdg}),
\item mapping class groups of surfaces of genus at least~$2$
  (Hamenst\"adt~\cite[Theorem~4.5]{Ham}).
\end{enumerate}
The results of Hamenst\"adt~\cite{Ham} hold more generally for all groups acting by isometries on a Gromov-hyperbolic
space, as long as the action satisfies a so-called weak acylindricity property.

\begin{prop}\label{p:Creg}
Groups in the class~$\Creg$ are not presentable by products.
\end{prop}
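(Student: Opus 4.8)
The plan is to follow the same template as the proofs for cost, the first $L^2$-Betti number, and the Powers property: assume $\Gamma\in\Creg$ is presentable by a product and derive a contradiction from the structural properties of the class $\Creg$ established by Monod and Shalom~\cite{MSjdg,MSannals}. Concretely, if $\Gamma$ were presentable by a product we would obtain commuting infinite subgroups $\Gamma_1,\Gamma_2\subset\Gamma$ with multiplication $\varphi\colon\Gamma_1\times\Gamma_2\longrightarrow\Gamma$ surjective onto a finite index subgroup $\Gamma'=\im\varphi$. Since membership in $\Creg$ is invariant under passage to finite index sub- and supergroups~\cite{MSannals}, it suffices to show that such a $\Gamma'$ cannot lie in $\Creg$, contradicting $\Gamma\in\Creg$. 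Using Lemma~\ref{l:finindexpres} I may in fact assume $\Gamma'=\Gamma$, i.e. that $\varphi$ is onto.

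As in the earlier arguments, I would split according to whether $\Gamma_1\cap\Gamma_2$ is infinite or finite. If $\Gamma_1\cap\Gamma_2$ is infinite, then by Lemma~\ref{l:center}(1) and the exact sequence~\eqref{eq:2} the group $\Gamma$ has an infinite abelian, hence amenable, normal subgroup. Monod and Shalom show that a group possessing an infinite amenable normal subgroup has vanishing $H^2_b(\,\cdot\,;\ell^2)$, so $\Gamma\notin\Creg$, a contradiction.

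If $\Gamma_1\cap\Gamma_2$ is finite, then the exact sequence~\eqref{eq:1} exhibits $\Gamma$ as the quotient of the direct product $\Gamma_1\times\Gamma_2$ by the finite central subgroup $\Gamma_1\cap\Gamma_2$. Here I would invoke the Monod--Shalom vanishing theorem for direct products: a direct product of two infinite groups is never in $\Creg$. Since both $\Gamma_1$ and $\Gamma_2$ are infinite, $\Gamma_1\times\Gamma_2\notin\Creg$, and transporting this across the finite central quotient gives $\Gamma\notin\Creg$, again a contradiction.

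The main obstacle is this last transfer step: the product-vanishing result is naturally stated for the honest direct product $\Gamma_1\times\Gamma_2$, whereas $\Gamma$ is only a quotient of it by a finite central subgroup. To close the gap I would appeal to the insensitivity of the class $\Creg$ to finite kernels, most cleanly via its invariance under measure equivalence established by Monod and Shalom~\cite{MSannals}, under which $\Gamma_1\times\Gamma_2$ and $(\Gamma_1\times\Gamma_2)/(\Gamma_1\cap\Gamma_2)=\Gamma$ are equivalent. The same invariance simultaneously justifies the reduction to finite index subgroups at the outset, so both points where commensurability enters are covered by a single cited property of $\Creg$. The only step requiring genuine care is checking that the precise forms of the amenable-normal-subgroup and direct-product vanishing statements in~\cite{MSjdg,MSannals} apply verbatim in the present internal setting; the remaining bookkeeping is identical to the cost and $L^2$-Betti number arguments given above.
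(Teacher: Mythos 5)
Your proposal is correct, and its skeleton is identical to the paper's proof: reduce to the case of surjective multiplication using the fact that $\Creg$ passes to finite index subgroups, then split according to whether $\Gamma_1\cap\Gamma_2$ is infinite or finite, killing the first case because groups in $\Creg$ admit no infinite amenable normal subgroup \cite[Prop.~7.10~(ii)]{MSannals}, and the second because a direct product of two infinite groups is never in $\Creg$ \cite[Prop.~7.10~(iii)]{MSannals}. The one point where you genuinely diverge is the step you yourself flag as the main obstacle, namely transferring non-membership in $\Creg$ across the finite central kernel $\Gamma_1\cap\Gamma_2$ in the exact sequence~\eqref{eq:1}. The paper closes this gap with the Monod--Shalom extension lemma \cite[Lemma~7.3]{MSannals}, applied directly to the sequence~\eqref{eq:1} to conclude that $\Gamma\in\Creg$ would force $\Gamma_1\times\Gamma_2\in\Creg$; you instead invoke measure equivalence invariance of $\Creg$ (\cite[Corollary~7.6]{MSannals}, which the paper cites elsewhere for Theorem~\ref{t:non-invariance}) together with the standard fact that a group is measure equivalent to its quotient by a finite normal subgroup, and likewise to its finite index subgroups. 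Both routes are legitimate. The paper's choice stays within the elementary algebraic lemmas about extensions and finite-index subgroups from Section~7 of \cite{MSannals}, which is the lighter machinery; your choice reaches for the stronger ME-invariance theorem, but, as you note, it has the virtue of covering the finite-index reduction and the finite-kernel transfer by a single cited property, at the cost of needing the (true, but worth stating) fact that quotienting by a finite normal subgroup is a measure equivalence.
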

\begin{proof}
  This is implicit in the work of Monod and Shalom~\cite[Section~7]{MSannals}.

  Assume for a contradiction that $\Gamma$ is a group in~$\Creg$ that
  is presentable by a product.  If a group is in~$\Creg$,
  then so are all its finite index
  subgroups~$\Gamma$~\cite[Lemma~7.5]{MSannals}. Therefore, by
  Lemma~\ref{l:finindexpres}, we may
  assume that $\Gamma$ contains commuting subgroups~$\Gamma_1$ and
  $\Gamma_2$ such that the multiplication homomorphism
  $\Gamma_1\times\Gamma_2\longrightarrow\Gamma$ is
  surjective. Now if $\Gamma_1\cap\Gamma_2$ is infinite, then $\Gamma$
  contains an infinite amenable normal subgroup by the exact
  sequence~\eqref{eq:2}, which contradicts the assumption that
  $\Gamma$ is in $\Creg$~\cite[Prop.~7.10~(ii)]{MSannals}. If
  $\Gamma_1\cap\Gamma_2$ is finite, then the exact
  sequence~\eqref{eq:1} and~\cite[Lemma~7.3]{MSannals} imply that
  $\Gamma_1\times\Gamma_2$ is in $\Creg$. If both $\Gamma_i$ are
  infinite, this is impossible~\cite[Prop.~7.10~(iii)]{MSannals}.
\end{proof}

Proposition~\ref{p:Creg} can be generalized in two different directions.
On the one hand, one can consider the class $\mathcal C$ of groups 
for which $H^2_b(\Gamma;\pi)\neq 0$ for some mixing unitary representation
$\pi$ of $\Gamma$, which is not necessarily the regular representation 
$\ell^2(\Gamma)$. The class $\mathcal C$ was also introduced by
Monod and Shalom~\cite{MSannals}, and their results used above for $\Creg$
apply more generally to $\mathcal C$. It is at present unknown whether
the inclusion $\Creg\subset\mathcal C$ is strict.
On the other hand, Thom~\cite{Thom} has introduced the following variant of $\Creg$.
\begin{defn}[\cite{Thom}]
  A countable group $\Gamma$ is in $\Dreg$ if
  $\dim_{L\Gamma}QH^1(\Gamma;\ell^2(\Gamma))\neq 0$, where $QH^1$
  denotes the first quasi-cohomology and $L\Gamma$ is the group von
  Neumann algebra of~$\Gamma$.
\end{defn}
It is as yet unknown whether $\Creg$ and $\Dreg$ agree. As far as presentability by products goes, both are equally
good:
\begin{prop}
Groups in $\Dreg$ are not presentable by products.
\end{prop}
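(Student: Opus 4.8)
The plan is to reduce membership in $\Dreg$ to the two obstructions already established in this paper, namely the nonvanishing of the first $L^2$-Betti number (Proposition~\ref{firstl2prop}) and membership in $\Creg$ (Proposition~\ref{p:Creg}). The mechanism is the fundamental exact sequence relating reduced first cohomology, first quasi-cohomology, and second bounded cohomology, all with coefficients in $\ell^2(\Gamma)$. I will not reprove the non-presentability statement from scratch; instead I will show that $\Dreg$ is contained in the union of the two classes that we already know to be not presentable by products.

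First I would invoke the exact sequence
\[
0 \longrightarrow \overline{H}^1(\Gamma;\ell^2(\Gamma)) \longrightarrow QH^1(\Gamma;\ell^2(\Gamma)) \longrightarrow \ker\bigl(c\colon H^2_b(\Gamma;\ell^2(\Gamma)) \to H^2(\Gamma;\ell^2(\Gamma))\bigr) \longrightarrow 0,
\]
where $\overline{H}^1$ denotes reduced first cohomology and $c$ is the comparison map. The left-hand inclusion is the passage from genuine (reduced) cocycle classes to quasi-cocycle classes, and the right-hand map sends the class of a quasi-$1$-cocycle $f$ to the class of its bounded coboundary $\delta f$; the identification of the cokernel of $\overline{H}^1 \hookrightarrow QH^1$ with $\ker(c)$ is the standard quasi-cocycle description of the kernel of the comparison map. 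All three terms are $L\Gamma$-modules, and since von Neumann dimension is additive on short exact sequences of such modules, we obtain
\[
\dim_{L\Gamma} QH^1(\Gamma;\ell^2(\Gamma)) = \dim_{L\Gamma}\overline{H}^1(\Gamma;\ell^2(\Gamma)) + \dim_{L\Gamma}\ker(c).
\]

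Now suppose $\Gamma \in \Dreg$, so that the left-hand side is positive. Then at least one summand on the right is positive, and I would split into two cases accordingly. In the first case $\dim_{L\Gamma}\overline{H}^1(\Gamma;\ell^2(\Gamma)) = \ltb 1 \Gamma > 0$, so $\Gamma$ is not presentable by a product by the contrapositive of Proposition~\ref{firstl2prop}. In the second case $\ker(c) \neq 0$, whence $H^2_b(\Gamma;\ell^2(\Gamma)) \neq 0$, i.e.\ $\Gamma \in \Creg$, and $\Gamma$ is not presentable by a product by Proposition~\ref{p:Creg}. Either way the conclusion follows.

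The main obstacle I anticipate is bookkeeping rather than conceptual: one must confirm that Thom's $QH^1$ is formed so that the displayed sequence holds with the \emph{reduced} first cohomology $\overline{H}^1$ in the kernel position, so that its von Neumann dimension is exactly $\ltb 1 \Gamma$, and that the dimension function entering the definition of $\Dreg$ is additive on this particular short exact sequence of $L\Gamma$-modules. Should one wish to sidestep these identifications, the fallback is to imitate the proof of Proposition~\ref{p:Creg} verbatim: pass to a finite-index subgroup using stability of $\Dreg$ under finite index, and then derive a contradiction in the two cases according to whether $\Gamma_1 \cap \Gamma_2$ is infinite or finite, using Thom's \cite{Thom} analogues of the Monod--Shalom vanishing statements for groups with an infinite amenable normal subgroup and for direct products of two infinite groups.
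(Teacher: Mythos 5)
Your proof is correct and follows essentially the same route as the paper: the paper reduces membership in $\Dreg$ to the dichotomy ``$\ltb 1 \Gamma \neq 0$ or $\Gamma \in \Creg$'' by citing Thom's exact-sequence lemma~\cite[Lemma~2.8]{Thom}, and then applies Propositions~\ref{firstl2prop} and~\ref{p:Creg} exactly as you do. The only difference is that you spell out the exact sequence and the von~Neumann dimension count that the paper delegates to that citation.
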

\begin{proof}
  A standard exact sequence argument shows that if $\Gamma$ is in
  $\Dreg$ then $\ltb 1 \Gamma \neq 0$ or $\Gamma$ is in
  $\Creg$~\cite[Lemma~2.8]{Thom}. In the first case the
  conclusion follows from Proposition~\ref{firstl2prop}, in the second
  case it follows from Proposition~\ref{p:Creg}.
\end{proof}

%%%%%%%%%%%%%%%%%%%%%%%%%%%%%%%%%%%%%%%%%%%%%%%%%%%%%%%%%%%%%%%%%%%
\section{Thompson's groups}\label{s:thompson}

Richard Thompson's groups $F$, $T$ and $V$ are interesting test cases for many issues 
in group theory. We refer to the survey by Cannon et.~al.~\cite{CFP} for their basic properties.

The groups $T$ and $V$ are simple, and are therefore trivially not presentable by products.
For $F$ we have:
\begin{prop}\label{p:thompson}
The Thompson group $F$ is not presentable by products.
\end{prop}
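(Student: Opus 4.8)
The plan is to combine the classical description of the normal subgroups of $F$ with the converse criterion of Proposition~\ref{p:C}. The groups $T$ and $V$ need no argument, being simple; for $F$ I would invoke the structure theory of Cannon--Floyd--Parry~\cite{CFP}, according to which the commutator subgroup $[F,F]$ is infinite and simple, the abelianisation $F/[F,F]$ is free abelian of rank two, and, crucially, \emph{every nontrivial normal subgroup of $F$ contains $[F,F]$} (equivalently, every proper quotient of $F$ is abelian). Granting this input, Proposition~\ref{p:C} reduces the claim to two verifications: that every finite index subgroup of $F$ has trivial centre, and that $F$ itself is irreducible.

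For the first verification I would first record that the centraliser $C:=C_F([F,F])$ is trivial. Being the centraliser of a normal subgroup, $C$ is itself normal in $F$; were it nontrivial it would contain $[F,F]$ by the structural fact above, forcing the infinite simple group $[F,F]$ to centralise itself and hence to be abelian, a contradiction. So $C=1$. Now let $H\leq F$ have finite index. Its normal core $K=\bigcap_{g\in F}gHg^{-1}$ is a finite index, hence nontrivial, normal subgroup, so $[F,F]\subseteq K\subseteq H$. Any element of $Z(H)$ therefore centralises $[F,F]$, giving $Z(H)\subseteq C=1$. Thus all finite index subgroups of $F$ have trivial centre, as Proposition~\ref{p:C} requires.

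For the second verification, suppose $F$ split as an internal direct product $A\times B$ of two nontrivial subgroups (equivalently, since $F$ is torsion-free, two infinite ones). Then $A$ and $B$ are normal in $F$ with $A\cap B=1$, so each contains $[F,F]$, yielding the absurdity $[F,F]\subseteq A\cap B=1$. Hence $F$ is irreducible, and Proposition~\ref{p:C} delivers the conclusion that $F$ is not presentable by products. The only non-formal ingredient in this scheme is the normal-subgroup structure imported from~\cite{CFP}; once that is in hand everything is elementary, so I do not anticipate a genuine obstacle. The one point to state with care is the reduction via Proposition~\ref{p:C}, whose hypothesis on finite index subgroups is exactly what the second paragraph establishes.
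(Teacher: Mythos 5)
Your scheme has one genuine gap, and it sits exactly at the point you flag as needing care: the application of Proposition~\ref{p:C}. In that proposition (with the terminology of~\cite{KL}), a group is \emph{reducible} if some \emph{finite index subgroup} is isomorphic to a direct product of two infinite groups; irreducibility is not merely the assertion that the group itself admits no internal direct product decomposition. Your second verification only rules out a splitting of $F$ itself, which is strictly weaker, and the weaker reading would make Proposition~\ref{p:C} false: take $A$ an infinite simple group (so $A$ has no proper finite index subgroups at all) and $\Gamma=(A\times A)\rtimes\Z/2$ with the involution swapping the factors. Then $\Gamma$ is presentable by a product, its only finite index subgroups are $\Gamma$ and $A\times A$ and both have trivial centre, yet $\Gamma$ is not a direct product of two nontrivial subgroups: a direct factor meets the normal subgroup $A\times A$ in a subgroup normal in $\Gamma$, and since the proper nontrivial normal subgroups $A\times1$, $1\times A$ of $A\times A$ are interchanged by the involution, the factor either contains $A\times A$ or meets it trivially; in the latter case it centralises $A\times A$, whose centraliser in $\Gamma$ is trivial. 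So ``all finite index subgroups have trivial centre'' together with ``$F$ is not itself a product'' does not yield the conclusion; you must exclude product decompositions of every finite index subgroup.

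The repair is short and uses only tools you already have in hand. Let $H\leq F$ have finite index and suppose $H=A\times B$ with $A,B$ nontrivial, hence infinite since $F$ is torsion-free. As in your core argument, $[F,F]\subseteq H$. Now $A$, $B$ and $[F,F]$ are all normal in $H$, so $A\cap[F,F]$ and $B\cap[F,F]$ are normal in the simple group $[F,F]$, hence each is trivial or all of $[F,F]$. They cannot both equal $[F,F]$ because $A\cap B=1$; and if, say, $A\cap[F,F]=1$, then $[A,[F,F]]\subseteq A\cap[F,F]=1$, so $A\subseteq C_F([F,F])=1$ by your first lemma, a contradiction. This establishes irreducibility in the sense actually required, and then Proposition~\ref{p:C} applies. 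With this patch your proof is correct, and it is essentially the paper's argument in different packaging: the paper works directly with commuting infinite subgroups $\Gamma_1,\Gamma_2$ multiplying onto a finite index subgroup, intersects them with $[F,F]$, and invokes the same three facts from~\cite{CFP} (nontrivial normal subgroups of $F$ contain $[F,F]$, simplicity of $[F,F]$, triviality of centres of finite index subgroups), thereby bypassing the notion of irreducibility altogether; your route through Proposition~\ref{p:C} buys a cleaner, citation-free treatment of the centre condition via $C_F([F,F])=1$, but only once the finite-index case of irreducibility is handled as above.
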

\begin{proof}
Suppose $\Gamma_1$, $\Gamma_2\subset F$ are commuting infinite subgroups such that the 
multiplication map $\Gamma_1\times\Gamma_2\longrightarrow F$ is surjective onto a finite 
index subgroup $\Gamma\subset F$. There is a normal finite index subgroup $\bar\Gamma\subset F$
contained in $\Gamma$. Since $\bar\Gamma$ is normal in $F$, it contains the commutator 
subgroup $[F,F]$ by~\cite[Theorem~4.3]{CFP}. Since $[F,F]$ is normal in $F$, it is also
normal in $\Gamma$. The quotient $\Gamma/[F,F]$ is Abelian.

Since $\Gamma_i$ and $[F,F]$ are both normal in $\Gamma$, their intersection 
$\bar\Gamma_i = \Gamma_i\cap [F,F]$ is normal in $[F,F]$.
However, $[F,F]$ is a simple group~\cite[Theorem~4.5]{CFP}. Thus $\bar\Gamma_i$ is 
trivial or all of $[F,F]$. If $\bar\Gamma_i$ is trivial, then the composition
$$
\Gamma_i\hookrightarrow \Gamma\longrightarrow\Gamma/[F,F]
$$
is injective, and so $\Gamma_i$ must be Abelian. But then $\Gamma_i$ is an infinite central subgroup of $\Gamma$. This contradicts the fact that every finite index subgroup 
of $F$ must have trivial centre. For $F$ itself this is proved in~\cite[p.~229]{CFP}, and that 
proof applies to all finite index subgroups.

The only possibility left is that both $\bar\Gamma_i$ equal $[F,F]$. But then $[F,F]$ is 
contained in $\Gamma_1\cap\Gamma_2$, and so must be Abelian by the discussion in 
Section~\ref{s:prelim}. This contradicts the fact that $[F,F]$ is an infinite simple group
by~\cite[Theorem~4.5]{CFP}. This contradiction proves that $F$ can not be presentable 
by a product.
\end{proof}
This proposition can not be proved using the rank gradient, cost or the first $L^2$-Betti number, since $F$
contains copies of itself with positive index $>1$, which immediately implies the vanishing of its 
rank gradient, and the vanishing of $\CC(F)-1$ and of $\ltb 1 {F}$. In spite of various recent claims,
at the time of writing it seems to be still unknown whether $F$ is amenable.
If this were true, it would imply that the bounded cohomology of $F$ is trivial, and that 
$F$ is not $C^*$-simple, in particular $F$ would not be a Powers group. Note however that $F$ is not 
elementary amenable~\cite[Theorem~4.10]{CFP}, and so this is certainly a very different example 
from the elementary amenable groups discussed in Example~\ref{ex:amenable} and Corollary~\ref{c:Sol}.

%%%%%%%%%%%%%%%%%%%%%%%%%%%%%%%%%%%%%%%%%%%%%%%%%%%%%%%%%%%%%%%%%%%
\section{Automorphism groups of free and free Abelian groups}\label{s:aut}

In this section we test the obstructions against presentability by products in
the examples of automorphism groups of free Abelian as well as non-Abelian free groups.
In both cases we prove that the groups in question are not presentable by products.

\subsection{Automorphism groups of free Abelian groups}

The questions of presentability by products for $\GL(n,\Z)$ and for
$\SL(n,\Z)$ are equivalent, since the latter is a finite index
subgroup in the former. These groups are residually finite with
vanishing rank gradient~\cite{lack}, as shown by the consideration of
congruence subgroups.  Thus the obstructions of Section~\ref{s:cost}
do not apply.  Moreover, for~$n \in \N_{\geq 3}$, the
groups~$\GL(n,\Z)$ and $\SL(n,\Z)$ are not in the
class~$\Creg$~\cite[Theorem~1.4]{MSjdg}. These groups are not
$C^*$-simple, since they have non-trivial centres. However, it is
known that $\PSL(n,\Z)$ is $C^*$-simple by a result of Bekka, Cowling
and de~la~Harpe~\cite{BCH}. Whether $\PSL(n,\Z)$ is a Powers group for
$n\geq 3$ is unknown; compare~\cite{BridsonHarpe,Harpe2}.  Thus none
of the high-tech obstructions can be used to prove that for any $n\geq 2$ the 
groups $\SL(n,\Z)$ are not presentable by products.
Nevertheless, this is true, as it is a special case of the following:
\begin{prop}\label{p:SL}
Suppose $G$ is a connected semisimple Lie group with finite centre and rank $\geq 2$.
If $\Gamma\subset G$ is an irreducible lattice, then $\Gamma$ is not presentable by products.
\end{prop}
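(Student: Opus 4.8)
The plan is to reduce, by Lemma~\ref{l:finindexpres} and the normalisation discussion of Section~\ref{s:prelim}, to the situation of two infinite commuting subgroups $\Gamma_1,\Gamma_2\subset\Gamma$ whose product is a finite index subgroup $\Gamma'\subset\Gamma$, and then to derive a contradiction from Margulis' Normal Subgroup Theorem together with the Borel density theorem. The point is that higher rank irreducible lattices have essentially no infinite normal subgroups of infinite index, which is exactly the feature that rules out product structures.

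First I would record two structural facts. Since a finite index subgroup of an irreducible lattice in a higher rank semisimple Lie group is again such a lattice, the subgroup $\Gamma'$ is itself an irreducible lattice of rank $\geq 2$, so Margulis' Normal Subgroup Theorem applies to it: every normal subgroup of $\Gamma'$ is either finite or of finite index. Second, by the Borel density theorem every finite index subgroup of $\Gamma$ is Zariski dense in $G$, so its centraliser in $G$ is $Z(G)$; in particular the centre of any finite index subgroup of $\Gamma$ is contained in the finite group $Z(G)$ and is therefore finite.

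Now I would assume for a contradiction that $\Gamma$ is presentable by a product. After passing to $\Gamma'$ we have infinite commuting subgroups $\Gamma_1,\Gamma_2$ with $\langle\Gamma_1,\Gamma_2\rangle=\Gamma'$. Since $\Gamma_1$ commutes with $\Gamma_2$ and with itself, it is normalised by $\langle\Gamma_1,\Gamma_2\rangle=\Gamma'$, so $\Gamma_1$ is an infinite normal subgroup of $\Gamma'$; by the Normal Subgroup Theorem it must have finite index, and likewise for $\Gamma_2$. Hence the intersection $\Gamma_1\cap\Gamma_2$ has finite index in $\Gamma'$. On the other hand, Lemma~\ref{l:center} puts $\Gamma_1\cap\Gamma_2$ inside the centre of $\Gamma'$, so $Z(\Gamma')$ has finite index in $\Gamma'$. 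Combined with the Borel density fact that $Z(\Gamma')$ is finite, this forces $\Gamma'$ to be finite, contradicting that a lattice in a noncompact semisimple Lie group is infinite. Therefore $\Gamma$ is not presentable by a product.

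The main obstacle is arranging the hypotheses of Margulis' theorem cleanly: one must check that $\Gamma'$ genuinely is an irreducible lattice of rank $\geq 2$, so that the dichotomy is available, and that each factor $\Gamma_i$ is \emph{normal} and \emph{infinite}, so that the dichotomy yields finite index rather than finiteness. Once that is in place the argument is pure bookkeeping: Lemma~\ref{l:center} supplies the centrality of $\Gamma_1\cap\Gamma_2$, and Borel density supplies the finiteness of $Z(\Gamma')$, so that the ``finite index centre'' collapses to a finite group, which is the desired contradiction. (One could alternatively first pass, using residual finiteness of $\Gamma$, to a finite index subgroup with trivial centre and then invoke Proposition~\ref{p:C}, but the route through the Normal Subgroup Theorem is more direct.)
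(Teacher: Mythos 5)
Your proposal is correct and follows essentially the same route as the paper's proof: pass to the finite index subgroup (still an irreducible lattice), observe that the commuting factors $\Gamma_i$ are infinite normal subgroups, apply Margulis' Normal Subgroup Theorem to get that both have finite index, and conclude that the central intersection $\Gamma_1\cap\Gamma_2$ has finite index, which is absurd. The only difference is cosmetic: the paper ends by noting that the lattice would be virtually Abelian, while you spell out the absurdity via Borel density and finiteness of the centre.
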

\begin{proof}
  Assume for a contradiction that $\Gamma_1$, $\Gamma_2\subset\Gamma$
  are infinite commuting subgroups such that the multiplication map
  $\Gamma_1\times\Gamma_2\longrightarrow\Gamma$ is surjective onto a
  finite index subgroup $\bar\Gamma\subset\Gamma$. Then $\bar\Gamma$
  is also an irreducible lattice. Since the $\Gamma_i$ are infinite
  normal subgroups in $\bar\Gamma$, the Margulis normal subgroup
  theorem~\cite[Chapter~IV]{margulis}, \cite[Chapter~8]{zimmer} implies
  that they have finite index in $\bar\Gamma$. Thus their
  intersection, which is a central subgroup, also has finite index,
  and so $\Gamma$ is virtually Abelian. This is absurd.
\end{proof}

Of course, for the case of $\GL(n,\Z)$ there is also an elementary
argument. One can find two elements in $\GL(n,\Z)$
that are diagonalizable over $\C$ and (whose non-trivial powers) have
no non-trivial common proper invariant subspace in~$\C^n$. Hence, the elements of a
finite index subgroup of~$\GL(n,\Z)$ can not have a common invariant
subspace in~$\C^n$. Assume $\GL(n,\Z)$ were presentable by a product
of subgroups~$\Gamma_1$ and~$\Gamma_2$. Using the fact that $\Gamma_1$
and $\Gamma_2$ commute, one could find a non-zero subspace~$E \subset
\C^n$ on which all elements of one of the factors, say~$\Gamma_2$, act
as multiples of the identity, and such that this subspace would
also be~$\Gamma_1$-invariant. Thus $E$ would be $(\Gamma_1 \cup
\Gamma_2)$-invariant. It would follow by what we said at the beginning that
$E=\C^n$, contradicting the assumption that $\Gamma_2$ is infinite.

\subsection{Automorphism groups of non-Abelian free groups}

Let $F_n$ be a free group on $n>1$ generators, $\Aut(F_n)$ its
automorphism group, and $\Out(F_n)=\Aut(F_n)/\Inn(F_n)$ its group of
outer automorphisms.
% In this section we work out which of the criteria for non-presentability 
%by products apply to $\Aut(F_n)$ and to $\Out(F_n)$.  
We use the following terminology:
\begin{defn}
  An element in~$\Out(F_n)$ is called \emph{reducible} if it leaves
  invariant the conjugacy class of a free factor in $F_n$, and it is
  called \emph{irreducible} otherwise.

  An element $g\in \Out(F_n)$ is called {\it fully irreducible} if
  $g^k$ is irreducible for all~$k\neq 0$.
\end{defn}

%\begin{rem}
  Fully irreducible elements are sometimes called irreducible with
  irreducible powers (iwip), cf.~\cite{L}.
%\end{rem}
In $\Out(F_n)$ these elements play a r\^ole analogous to that of 
pseudo-Anosov elements in mapping class groups.

We now prove:
\begin{prop}\label{p:free}
 If $n \in \bN_{>1}$, then 
 the groups $\Aut(F_n)$ and $\Out(F_n)$ are
  not presentable by products.
\end{prop}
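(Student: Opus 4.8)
The plan is to use the acentral subgroup machinery of Proposition~\ref{prop:acentral}, mirroring how the case (MCG) of Theorem~\ref{t:alg} was recovered. Recall that for mapping class groups the argument used a pseudo-Anosov element~$g$ whose cyclic subgroup is acentral; the natural analogue in~$\Out(F_n)$ is a fully irreducible element. So the first thing I would do is handle~$\Out(F_n)$ directly by exhibiting a fully irreducible~$g \in \Out(F_n)$ such that the infinite cyclic group~$\langle g\rangle$ (or more precisely its centraliser) is acentral of infinite index. The key input is a structural fact about centralisers of fully irreducible elements: by work of Bestvina--Handel and the train-track theory for~$\Out(F_n)$, the centraliser of a fully irreducible element is virtually cyclic. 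One then checks that an appropriate infinite cyclic subgroup is acentral — i.e.\ for every nontrivial~$h$ in it, $C_{\Out(F_n)}(h)$ is again contained in it — and that it has infinite index since~$\Out(F_n)$ is not virtually cyclic for~$n>1$. Proposition~\ref{prop:acentral} then applies.

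Second, I would deduce the statement for~$\Aut(F_n)$ from that for~$\Out(F_n)$. The two groups fit into the central-type extension
\begin{equation*}
  1 \longrightarrow \Inn(F_n) \longrightarrow \Aut(F_n) \longrightarrow \Out(F_n) \longrightarrow 1,
\end{equation*}
and since~$F_n$ has trivial centre for~$n>1$, we have~$\Inn(F_n)\cong F_n$. The cleanest route is to produce an acentral subgroup of~$\Aut(F_n)$ directly: lift a fully irreducible~$g$ to~$\Aut(F_n)$ and analyse the centraliser of the lift, again using that centralisers in~$\Out(F_n)$ are virtually cyclic together with the fact that the fibre~$\Inn(F_n)$ is free. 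Alternatively, one argues that presentability by products of~$\Aut(F_n)$ would push down to a product decomposition of a finite index subgroup of~$\Out(F_n)$, contradicting the first part; here one must be careful that the factors project to commuting subgroups of~$\Out(F_n)$ with infinite image, which requires ruling out the possibility that an entire factor lands in~$\Inn(F_n)$.

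The main obstacle I anticipate is precisely the acentrality verification, namely pinning down the centralisers. Unlike the mapping class group case — where Preissmann-type rigidity is classical — the statement that centralisers of fully irreducible elements in~$\Out(F_n)$ (and their lifts in~$\Aut(F_n)$) are virtually cyclic, and that one can extract from them a genuinely \emph{acentral} cyclic subgroup of infinite index, relies on the more delicate train-track technology. In particular, for~$\Aut(F_n)$ one must control how centralisers interact with the free normal subgroup~$\Inn(F_n)$, so that a nontrivial inner automorphism does not sneak into the centraliser of the chosen element; this is where the free-product-style centraliser analysis (as in Example~\ref{ex:free}) is the right model. Once the virtual cyclicity and infinite index are established, the rest is a formal invocation of Proposition~\ref{prop:acentral}, exactly as in the (MCG) case.
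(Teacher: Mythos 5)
Your treatment of $\Out(F_n)$ is essentially the paper's own (alternative) argument: the paper passes to a torsion-free finite-index subgroup $\Gamma$ (via Baumslag--Taylor~\cite{BT} and Lemma~\ref{l:finindexpres}), takes a fully irreducible $g\in\Gamma$, invokes Lustig's theorem~\cite{L} that $C_\Gamma(g)$ is virtually cyclic, and derives a contradiction with triviality of the centre; it then remarks explicitly that this can be recast exactly as you propose, namely that the cyclic subgroups generated by suitable fully irreducible elements are acentral, so that Proposition~\ref{prop:acentral} applies. Two details you elide: the passage to a torsion-free finite-index subgroup is not optional (in $\Out(F_n)$ itself a torsion element inside the virtually cyclic centraliser has an enormous centraliser of its own, destroying acentrality), and the centraliser input is due to Lustig, with related statements in Bestvina--Feighn--Handel; the paper's primary proof for $n\geq 3$ is in fact the Powers property, with your route given as the alternative.

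The genuine gap is in the $\Aut(F_n)$ half, and it sits exactly where you flag it. The paper does something different there: it applies the extension criterion~\cite[Prop.~3.9]{KL} to $1\to F_n\to\Aut(F_n)\to\Out(F_n)\to 1$, pulled back over a torsion-free finite-index subgroup of $\Out(F_n)$, using that this extension does not split when restricted to finite-index subgroups; no centraliser analysis inside $\Aut(F_n)$ is needed. Your route (a) founders on fixed subgroups: since $F_n$ has trivial centre, for any lift $\tilde g$ of $g$ one has $C_{\Aut(F_n)}(\tilde g)\cap\Inn(F_n)=\Inn\bigl(\mathrm{Fix}(\tilde g)\bigr)$, so acentrality of (the centraliser of) $\langle\tilde g\rangle$ requires that \emph{every} nontrivial power of $\tilde g$ have trivial fixed subgroup. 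This fails for the natural lifts of geometric fully irreducible elements --- and for $n=2$ every fully irreducible element of $\Out(F_2)$ is geometric: a lift fixing the boundary word $c=[x,y]$ has centraliser containing $\langle\tilde g\rangle\times\Inn(\langle c\rangle)\cong\Z^2$, which is not virtually cyclic. Avoiding this needs atoroidal fully irreducible elements, which exist only for $n\geq3$, so $n=2$ is left untouched. Your route (b) is worse than ``requires care'': the bad case cannot be ruled out. For $n=2$ put $\Gamma_1=\langle\Inn(c)\rangle$ and $\Gamma_2=\{\phi\in\Aut(F_2)\colon\phi(c)=c\}$; these are commuting infinite subgroups, $\Gamma_1$ lies entirely inside $\Inn(F_2)$, and $\Gamma_2$ maps onto $\SL(2,\Z)$, a finite-index subgroup of $\Out(F_2)$. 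So a factor really can land in $\Inn(F_n)$ while the other has finite-index image; what saves the statement is only that $\Gamma_1\Gamma_2$ has infinite index in $\Aut(F_2)$ (its intersection with $\Inn(F_2)$ is just $\Inn(\langle c\rangle)$), and extracting that contradiction in general is precisely the argument your sketch omits. As written, the proposal does not prove the $\Aut(F_n)$ statement, and in particular proves nothing for $n=2$.
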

\begin{proof}
  We begin with the case of~$\Out(F_n)$. For $n=2$ this reduces to 
  $\GL(2,\Z)$, so there is nothing to prove. For $n\geq 3$ we may 
  appeal to Proposition~\ref{p:Powers}, since~$\Out(F_n)$ is a
  Powers group by a result of Bridson and de~la~Harpe~\cite[Theorem~2.6]{BridsonHarpe}.
  
  Instead of using the Powers property, we can give a direct proof by
  contradiction. It follows from a result of Baumslag and
  Taylor~\cite[Prop.~1]{BT} that $\Out(F_n)$ is virtually
  torsion-free. Thus, by the discussion in Section~\ref{s:prelim}, we
  may assume that we have a torsion-free finite index subgroup
  $\Gamma\subset \Out(F_n)$ together with two non-trivial commuting
  subgroups $\Gamma_1, \Gamma_2\subset\Gamma$ such that the
  multiplication
  homomorphism~$\Gamma_1\times\Gamma_2\longrightarrow\Gamma$ is
  surjective.

  Since $\Gamma$ has finite index in~$\Out(F_n)$, there exists a fully
  irreducible element~$g \in \Gamma$. By a result of Lustig~\cite{L} the 
  centraliser~$C_\Gamma(g)$ of~$g$ is virtually cyclic. Related 
  statements appear in the work of Bestvina, Feighn and Handel on the 
  Tits alternative for $\Out(F_n)$; see e.g.~\cite[Theorem~2.14]{BFH}.
  
  We can write $g = g_1 \cdot g_2$ with certain~$g_1 \in \Gamma_1$ and
  $g_2 \in \Gamma_2$. 
As $g$ is non-trivial, we may assume that so is $g_1$; note
  that~$g_1 \in C_\Gamma(g)$. Moreover, there exists an element~$g_2'
  \in \Gamma_2 \setminus \{1\}$ with~$g_2' \in C_\Gamma(g)$. If
  $g_2 \neq 1$ then we can take~$g_2' = g_2$, and if $g_2 = 1$ we may
  choose any non-trivial element of~$\Gamma_2$ for~$g_2'$. As both
  $g_1$ and $g_2'$ have infinite order and are contained in the virtually
  cyclic group $C_\Gamma(g)$, they have common non-trivial powers.
  This shows that $\Gamma_1\cap\Gamma_2$ is infinite, and so the 
  centre of $\Gamma$ is infinite by Lemma~\ref{l:center}. %Therefore, $C(\Gamma)$ is non-trivial. 

This is a contradiction, since $\Gamma$ must in fact have trivial centre;
compare~\cite{BFH}. (One way to see this is to check that $\Gamma$
contains two fully irreducible elements with distinct stable and unstable 
laminations.) This completes the direct proof that $\Out(F_n)$ is not presentable 
by products.

Next consider the extension
\begin{equation}\label{ext}
  1\longrightarrow F_n\longrightarrow \Aut(F_n)\stackrel{\pi}{\longrightarrow} \Out(F_n)\longrightarrow 1 \ .
\end{equation}
We may pull back this extension to a torsion-free finite index
subgroup of $\Out(F_n)$, so that the assumption on the quotient
in~\cite[Prop.~3.9]{KL} is satisfied by what we just
proved. Now~\cite[Prop.~3.9]{KL} tells us that $\Aut(F_n)$ is not
presentable by a product since the extension~\eqref{ext} does not
split when restricted to any finite index subgroups.  This completes
the proof of Proposition~\ref{p:free}.
\end{proof}

\begin{rem}
The direct argument for $\Out(F_n)$ could be rephrased to argue that the infinite 
cyclic subgroups generated by certain fully irreducible elements are acentral.
\end{rem}

\begin{rem}\label{rem:Bestvina}
After we proved directly that $\Out(F_n)$ is not presentable by products, we tried 
to find out whether $\Out(F_n)$ is (known to be) in~$\Creg$. In reply to our question,
Bestvina and Fujiwara told us that a proof of this statement will be contained in a 
forthcoming paper~\cite{BBF}. Since then, Hamenst\"adt~\cite{Ham3} has given such
a proof.
\end{rem}

The rank gradient, the cost, or the first $L^2$-Betti number cannot be used to prove
Proposition~\ref{p:free} in view of our next result:
\begin{prop}\label{p:Outcheap}
  Let $n \in \bN_{\geq 3}$. The groups $\Aut(F_n)$ and $\Out(F_n)$ are
  cheap. Their first $L^2$-Betti numbers and their rank gradients vanish.
\end{prop}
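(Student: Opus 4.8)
The plan is to reduce all three assertions to the single statement that the rank gradient vanishes. For a residually finite group the inequalities of Ab\'ert--Nikolov (Theorem~\ref{t:AN}) and Gaboriau (Theorem~\ref{t:gab}) combine into the chain
\[
\RG(\Gamma)\;\geq\;\CC(\Gamma)-1\;\geq\;\ltb 1 \Gamma\;\geq\;0 .
\]
Thus, once I know that $\Aut(F_n)$ and $\Out(F_n)$ are residually finite and have vanishing rank gradient, it follows at once that $\CC(\Gamma)-1=0$, i.e.\ that the groups are cheap, and that $\ltb 1 \Gamma=0$. Both residual finiteness statements are classical: $\Aut(F_n)$ is residually finite as the automorphism group of the finitely generated residually finite group~$F_n$, and the residual finiteness of $\Out(F_n)$ is likewise standard. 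So the genuine content is the vanishing of $\RG$.

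For the rank gradient I would follow exactly the template of Proposition~\ref{p:MCG}. The abelianisation $F_n\to\Z^n$ induces a surjection $\Aut(F_n)\to\GL(n,\Z)$ whose kernel is the subgroup $\IA_n$ of automorphisms acting trivially on homology, giving the extension
\[
1\longrightarrow \IA_n \longrightarrow \Aut(F_n)\longrightarrow \GL(n,\Z)\longrightarrow 1 .
\]
By a classical theorem of Magnus, $\IA_n$ is finitely generated, while $\GL(n,\Z)$ is residually finite with finite quotients of arbitrarily large order (for instance via reduction modulo successive primes). These are precisely the hypotheses of the result of Ab\'ert and Nikolov~\cite[Prop.~13]{AN}, which then yields $\RG(\Aut(F_n))=0$. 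For $\Out(F_n)$ one uses the same homomorphism: since inner automorphisms act trivially on $\Z^n$, the induced surjection $\Out(F_n)\to\GL(n,\Z)$ has kernel the image of $\IA_n$, which is finitely generated as a quotient of $\IA_n$; the identical application of \cite[Prop.~13]{AN} gives $\RG(\Out(F_n))=0$.

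The main input is therefore the finite generation of $\IA_n$, which is the one genuinely nontrivial ingredient; the inequality chain and the Ab\'ert--Nikolov extension criterion take care of all the remaining bookkeeping. The hypothesis $n\geq 3$ enters through the target of the extension rather than through $\IA_n$: for $n=2$ one has $\Out(F_2)\cong\GL(2,\Z)$, which is virtually free and hence has \emph{positive} rank gradient and positive first $L^2$-Betti number, so the conclusion genuinely fails there, and the argument must rest on $\GL(n,\Z)$ with $n\geq 3$ as the base. Beyond isolating and citing Magnus's generation result I expect no further obstacle.
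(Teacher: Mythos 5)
Your overall strategy coincides with the paper's: residual finiteness (Baumslag for $\Aut(F_n)$, Grossman for $\Out(F_n)$) together with the chain $\RG(\Gamma)\geq\CC(\Gamma)-1\geq\ltb 1 \Gamma\geq 0$ from Theorems~\ref{t:AN} and~\ref{t:gab} reduces everything to the vanishing of the rank gradient, which is then extracted from \cite[Prop.~13]{AN} applied to an extension with finitely generated kernel. For $\Out(F_n)$ your extension is literally the paper's; for $\Aut(F_n)$ you use $1\to\IA_n\to\Aut(F_n)\to\GL(n,\Z)\to 1$ instead of the paper's $1\to F_n\to\Aut(F_n)\to\Out(F_n)\to 1$, which is an equally legitimate choice once the hypotheses are verified.

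But there is a genuine gap, and it is exactly at the phrase ``these are precisely the hypotheses'': you have dropped the requirement that the finitely generated normal subgroup be \emph{infinite}, which the paper's own application makes explicit (``the groups on the left and on the right are infinite''). With only the hypotheses you state (kernel finitely generated, base residually finite with finite quotients of arbitrarily large order), the criterion is false: applied to $1\to 1\to\Out(F_2)\to\GL(2,\Z)\to 1$ it would give $\RG(\Out(F_2))=0$, contradicting your own correct observation that $\Out(F_2)\cong\GL(2,\Z)$ is virtually free with positive rank gradient. This also shows your diagnosis of the case $n=2$ is backwards: the hypothesis $n\geq 3$ does \emph{not} enter ``through the target rather than through $\IA_n$'' --- $\GL(2,\Z)$ is a perfectly good base, having congruence quotients of unbounded order --- it enters through the kernel, which for $n=2$ is trivial in the Out case because $\IA_2=\Inn(F_2)$ (Nielsen), i.e.\ $\Out(F_2)=\GL(2,\Z)$; this is what the paper's remark after the proposition says. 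To complete your argument you must therefore check infiniteness of both kernels: for $\Aut(F_n)$ this is immediate, since $\IA_n\supseteq\Inn(F_n)\cong F_n$; for $\Out(F_n)$ with $n\geq 3$ you need that the image of $\IA_n$ in $\Out(F_n)$ is infinite, which is true but requires an argument --- for instance, the partial conjugation $x_1\mapsto x_2x_1x_2^{-1}$, $x_i\mapsto x_i$ for $i\geq 2$, has all nonzero powers non-inner when $n\geq 3$, because an inner representative would have to centralise both $x_2$ and $x_3$ and hence be trivial. Note that this verification is the one place where $n\geq 3$ is genuinely used, so omitting it removes precisely the content that distinguishes the true statement from the false one.
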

\begin{proof}
The groups in question are residually finite. For $\Aut(F_n)$ this is a classical result of 
Baumslag, whereas for $\Out(F_n)$ it was proved by Grossman~\cite{gross}. Thus, by the 
discussion in Subsection~\ref{ss:comparison}, we only have to prove the vanishing of the rank gradient.
For this we use again the result of Ab\'ert and Nikolov~\cite[Prop.~13]{AN} about extensions.
For $\Aut(F_n)$ we apply the result to the extension~\eqref{ext}. The group on the left is finitely generated and the 
group on the right admits finite quotients of arbitrarily large order.

Similarly for $\Out(F_n)$ we consider the extension
$$
  1\longrightarrow \IA_n\longrightarrow \Out(F_n)\longrightarrow \GL(n;\Z)\longrightarrow 1 \ .
$$
The groups on the left and on the right are infinite, and the kernel
$\IA_n$ is finitely generated by a classical result of
Magnus. Again the group on the right has finite quotients of arbitrarily large order.
\end{proof}
\begin{rem}
The argument for $\Aut(F_n)$ also works for $n=2$. The argument for $\Out(F_n)$
however breaks down for $n=2$ since $\IA_2$ is trivial. In this case $\Out(F_2)=\GL(2;\Z)$
has positive rank gradient as it is virtually free~\cite{lack}.
\end{rem}

%%%%%%%%%%%%%%%%%%%%%%%%%%%%%%%%%%%%%%%%%%%%%%%%%%%%%%%%%%%%%%%%%%%
\section{Ends, free products, and connected sums}\label{s:free}

In this section we consider free products of groups, and, more
generally, groups with infinitely many ends.

\begin{prop}\label{p:ends}
Groups with infinitely many ends are not presentable by products.
\end{prop}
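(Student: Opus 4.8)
The quickest route is to note that groups with infinitely many ends lie in the class $\Creg$, as recorded in Section~\ref{s:bc} (Monod--Shalom), so the statement is an immediate consequence of Proposition~\ref{p:Creg}. Nevertheless I would prefer to give a self-contained proof by a direct analysis of centralisers in the spirit of Section~\ref{s:acentral}, using Stallings' structure theorem. Assume for a contradiction that $\Gamma$ is presentable by a product. Since having infinitely many ends is invariant under passage to finite index subgroups, Lemma~\ref{l:finindexpres} lets me assume there are infinite commuting subgroups $\Gamma_1,\Gamma_2\subset\Gamma$ whose multiplication map is surjective. By Stallings' theorem $\Gamma$ splits over a finite subgroup, hence acts on a simplicial tree $T$ without inversions, without a global fixed point, and with finite edge stabilisers; as $\Gamma$ is not two-ended, the action has no invariant line and contains a hyperbolic element $g$.

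The two geometric inputs I would isolate are: (i) the centraliser $C_\Gamma(h)$ of any hyperbolic element $h$ is virtually cyclic, and (ii) one of the factors $\Gamma_i$ contains a hyperbolic element. For (i), the group $C_\Gamma(h)$ preserves the axis $A_h$, so restriction to $A_h$ gives a homomorphism $C_\Gamma(h)\to\Isom(A_h)$ whose kernel fixes an edge and is therefore contained in a finite edge stabiliser, while its image lies in the infinite dihedral group of integer isometries of the line; hence $C_\Gamma(h)$ is virtually cyclic. For (ii), write $g=g_1g_2$ with $g_i\in\Gamma_i$; since $\Gamma_1$ and $\Gamma_2$ commute, the elements $g_1,g_2$ commute and both lie in $C_\Gamma(g)$. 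If both were elliptic they would share a fixed point (two commuting elliptic isometries of a tree do), whence $g=g_1g_2$ would be elliptic, contradicting the choice of $g$. So, after relabelling, $g_1$ is hyperbolic.

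I would then extract an infinite central subgroup. Because $\Gamma_2$ commutes with $g_1\in\Gamma_1$, we have $\Gamma_2\subset C_\Gamma(g_1)$, which is virtually cyclic by (i); as $\Gamma_2$ is infinite it contains an element $h$ of infinite order. The commuting infinite-order elements $g_1$ and $h$ of the virtually cyclic group $C_\Gamma(g_1)$ admit a common nontrivial power, and this power is a nontrivial element of $\Gamma_1\cap\Gamma_2$; thus $\Gamma_1\cap\Gamma_2$ is infinite. By Lemma~\ref{l:center} and the exact sequence~\eqref{eq:2}, the group $\Gamma$ then possesses the infinite abelian, hence amenable, normal subgroup $\Gamma_1\cap\Gamma_2$. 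But a group with infinitely many ends, being in $\Creg$, has no infinite amenable normal subgroup — this is exactly the contradiction already exploited in the proof of Proposition~\ref{p:Creg}.

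I expect the main obstacle to be the torsion bookkeeping that makes step (ii) and the extraction of infinite-order elements robust \emph{without} assuming $\Gamma$ virtually torsion-free (infinitely-ended groups need not be): the crucial observation that lets me avoid this is that the problematic configuration, in which the decomposition $g=g_1g_2$ has an elliptic or torsion factor, either forces $g$ itself to be elliptic (ruled out) or relocates a hyperbolic element into a single factor, after which the commuting centraliser $C_\Gamma(g_1)$ supplies the required infinite-order element in the other factor. The remaining ingredients -- existence of a hyperbolic element for a finitely generated group acting on a tree with more than one end, the common-fixed-point property for commuting elliptics, and the absence of infinite amenable normal subgroups -- I would treat as standard and cite accordingly.
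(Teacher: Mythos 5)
Your proof is correct, but it takes a genuinely different route from the paper. The paper's primary argument is a two-line reduction: groups with infinitely many ends have positive first $L^2$-Betti number, so Proposition~\ref{firstl2prop} applies. It then records the $\Creg$ route you mention as a second alternative, and finally gives a completely elementary third proof, essentially due to Freudenthal~\cite{freude}, working directly with the action of $\Gamma$ on its space of ends: an infinite-order element $g$ whose positive and negative powers converge to two distinct ends is written as $g=g_1g_2$, and commutation forces a subgroup of index at most~$4$ in $\Gamma$ to fix those two ends, contradicting the hypothesis. Your argument is a fourth route, a Bass--Serre analogue of Freudenthal's: the centraliser analysis (i), the elliptic/hyperbolic case distinction for $g=g_1g_2$ via the common-fixed-point property of commuting elliptics, and the extraction of a common power showing $\Gamma_1\cap\Gamma_2$ is infinite are all sound. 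Note what each proof consumes, though: Freudenthal's argument needs nothing beyond the definition of ends, while yours requires Stallings' structure theorem as input -- exactly the dependence the paper flags as the drawback of the Monod--Shalom route compared with the $L^2$-Betti number route.

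One aspect of your write-up undercuts its own purpose: the final contradiction re-imports the bounded-cohomology machinery ($\Gamma\in\Creg$ has no infinite amenable normal subgroup), and if you are willing to use that, the entire tree argument is superfluous, since Proposition~\ref{p:Creg} applies directly, as your opening sentence says. The fix is easy and keeps everything inside Bass--Serre theory. By Lemma~\ref{l:center} the infinite subgroup $\Gamma_1\cap\Gamma_2$ is \emph{central} in $\Gamma$, and it contains an element $z$ of infinite order, namely the common power you constructed. If $z$ were elliptic, fixing a vertex $v$, then centrality gives $z(gv)=g(zv)=gv$ for all $g\in\Gamma$; since the action has no global fixed point, the orbit of $v$ contains a second vertex, so $z$ would fix an edge and hence lie in a finite edge stabiliser, contradicting its infinite order. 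Therefore $z$ is hyperbolic, and then $\Gamma=C_\Gamma(z)$ is virtually cyclic by your step (i), i.e., two-ended -- the desired contradiction. With this ending your proof is self-contained modulo Stallings' theorem and needs no input from $\Creg$ at all.
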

\begin{proof}
  It is well known that groups with infinitely many ends have positive
  first $L^2$-Betti number; see for example~\cite[Chapter~4]{ABCKT}
  or~\cite[Cor.~1]{BV}. Therefore the result follows from
  Proposition~\ref{firstl2prop}. 

  Alternatively we could use the fact that groups with infinitely many
  ends are in~$\Creg$, as proved by Monod and
  Shalom~\cite[Corollary~7.9]{MSjdg}, and appeal to
  Proposition~\ref{p:Creg}. Notice however, that in contrast to the
  result about the first $L^2$-Betti number, the proof of Monod and
  Shalom uses Stallings's structure theorem for groups with
  infinitely many ends.
  
  Finally, a completely elementary argument is possible as well.
  Freudenthal and Hopf proved that a group with
  infinitely many ends cannot be a direct product of infinite groups.
  The argument given by Freudenthal~\cite[7.10]{freude} in fact
  proves the more general statement of this theorem. For the 
  convenience of the reader we repeat this argument briefly.
  
  Let $\Gamma$ be a group with infinitely many ends, and $\Gamma_1$, 
  $\Gamma_2$ commuting infinite subgroups for which the multiplication map
  $\varphi\colon\Gamma_1\times\Gamma_2\longrightarrow\Gamma$ is surjective onto a 
  finite index subgroup. Since the number of ends is unchanged by passage to a 
  finite index subgroup, we may assume that $\varphi$ is surjective.
  The assumption that $\Gamma$ has more than one end implies that 
  there is an element $g\in\Gamma$ of infinite order for which $g^n$
  and $g^{-n}$ belong to two different ends $\mathfrak{e}$ and $\mathfrak{e}'$
  as $n\rightarrow\infty$; see~\cite[7.6]{freude}.
  
  Under the action of $\Gamma$ on its space of ends, the infinite
  cyclic subgroup $T$ generated by $g$ fixes $\mathfrak{e}$ and
  $\mathfrak{e}'$. Write $g=g_1g_2$ with $g_i\in\Gamma_i$. The $g_i$
  commute with $T$, and so both $g_i$ also fix $\mathfrak{e}$ and
  $\mathfrak{e}'$.  Since $g$ has infinite order, we may assume that
  so does $g_1$. Then $g_1$ generates an infinite cyclic subgroup $T'$
  of $\Gamma_1$ that fixes $\mathfrak{e}$ and $\mathfrak{e}'$.  As
  $\Gamma_2$ commutes with $T'$, it contains a subgroup $\Gamma_2'$ of
  index at most $2$ that also fixes $\mathfrak{e}$ and
  $\mathfrak{e}'$~\cite[7.7]{freude}.  As $\Gamma_1$ commutes with
  $\Gamma_2'$, it contains a subgroup $\Gamma_1'$ of index at most $2$
  which also fixes $\mathfrak{e}$ and
  $\mathfrak{e}'$~\cite[7.7]{freude}. Thus $\Gamma$ has a subgroup of
  index at most $4$ which fixes $\mathfrak{e}$ and
  $\mathfrak{e}'$. This contradicts the assumption that $\Gamma$ has
  infinitely many ends.
\end{proof}

\begin{cor}%[Free products]
\label{freeprodcoro}
  Let $\Delta_1$ and $\Delta_2$ be two non-trivial groups. Then
  the free product~$\Delta_1 \star \Delta_2$ is presentable by a product if
  and only if~$\Delta_1 \cong \Z/2 \cong \Delta_2$.
\end{cor}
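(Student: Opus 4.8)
The plan is to prove the two implications separately, using the already-established Proposition~\ref{p:ends} for the ``only if'' direction and a direct construction combined with Lemma~\ref{l:finindexpres} for the ``if'' direction.

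For the ``only if'' direction I would argue contrapositively, showing that whenever the pair $(\Delta_1,\Delta_2)$ is anything other than $(\Z/2,\Z/2)$, the free product $\Delta_1\star\Delta_2$ has infinitely many ends and hence is not presentable by products by Proposition~\ref{p:ends}. The key input is the classical classification of the number of ends of a nontrivial free product of finitely generated groups: such a group always has more than one end (it splits over the trivial, hence finite, subgroup), and it has exactly two ends precisely when it is virtually infinite cyclic. For a free product of two nontrivial groups the latter happens if and only if both factors are isomorphic to $\Z/2$, giving the infinite dihedral group. In every other case the free product fails to be virtually cyclic -- visible, for instance, from its action on the Bass--Serre tree, which acquires a vertex of valence $\geq 3$ as soon as one factor has order $>2$ or is infinite -- and therefore has infinitely many ends. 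Thus presentability by a product forces $\Delta_1\cong\Z/2\cong\Delta_2$.

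For the ``if'' direction I would verify directly that the infinite dihedral group $\Z/2\star\Z/2$ is presentable by a product. It contains the infinite cyclic subgroup generated by the product of the two involutions with index $2$, so by Lemma~\ref{l:finindexpres} it suffices to observe that $\Z$ itself is presentable by a product. This is immediate: with $\Gamma_1=\Gamma_2=\Z$ and $\varphi\colon\Z\times\Z\to\Z$ the addition map $(m,n)\mapsto m+n$, one obtains a surjection both of whose factors have infinite image, which is exactly what the definition demands.

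The substantive step, and thus the main obstacle, is the ends computation in the ``only if'' direction; everything else is a formal consequence of results already in hand. Since that input is classical (Freudenthal--Hopf, Stallings, or Bass--Serre theory), the real work is to cite it correctly and to confirm that finite generation of both factors -- guaranteed by our standing convention -- holds, so that the theory of ends applies. Alternatively, one could bypass the full dichotomy by invoking Example~\ref{ex:free}, which already handles every case in which some factor has order $>2$, and noting that the only remaining possibility for two nontrivial factors is $\Z/2\star\Z/2$.
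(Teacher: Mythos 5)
Your proposal is correct and takes essentially the same route as the paper: the ``only if'' direction via the fact that any free product other than $\Z/2\star\Z/2$ has infinitely many ends, so Proposition~\ref{p:ends} applies (with the acentral-subgroup alternative via Example~\ref{ex:free} also noted, just as in the paper), and the ``if'' direction via the observation that $\Z/2\star\Z/2$ is virtually infinite cyclic. The only difference is one of detail: you spell out the ends trichotomy and the presentability of $\Z$ (via Lemma~\ref{l:finindexpres} and the addition map), which the paper leaves as classical, immediate facts.
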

\begin{proof}
  On the one hand, $\Z/2\star\Z/2$ is virtually infinite cyclic, and
  therefore presentable by a product.  On the other hand, if one of
  the groups has order at least~$3$, then their free product has
  infinitely many ends whence Proposition~\ref{p:ends} applies.
  Alternatively we can use Example~\ref{ex:free} to see that there are
  infinite acentral subgroups of infinite index and apply Proposition~\ref{prop:acentral},
  or we can use Proposition~\ref{p:Powers} in conjunction
  with the fact that these free products are Powers groups; see 
  Bridson and de~la~Harpe~\cite[Theorem~2.2]{BridsonHarpe}.
\end{proof}

\begin{rem}
  Lackenby~\cite[Prop.~3.2]{lack} proved that the rank gradient of a
  free product $\Delta_1 \star \Delta_2$ of non-trivial groups is positive
  if at least one of the free factors has order~$>2$. Therefore, for
  residually finite groups Corollary~\ref{freeprodcoro} also follows from
  Proposition~\ref{p:RG}.
\end{rem}

We can use the last corollary to put restrictions on the connected sum
decompositions of manifolds dominated by products. Suppose $N=N_1\#
N_2$ is a connected sum of two closed oriented $n$-manifolds, and
$P=X_1\times X_2$ is a non-trivial product of closed oriented manifolds
with~$P\geq N$. Then, collapsing one or the other summand of~$N$ to a
point, we see that $P\geq N_1$ and $P \geq N_2$. Thus, for $N$ to be
dominated by a product it is necessary that its connected
summands~$N_i$ are also dominated by products. However, this
necessary condition is not sufficient.

\begin{thm}\label{t:sums}
  If $N$ is a closed, oriented, connected rationally essential
  manifold that is dominated by a non-trivial product $P\geq N$ and that admits 
  a connected sum decomposition $N=N_1\# N_2$, then one of the summands is simply 
  connected, and the fundamental group of the other summand is presentable by a product.
\end{thm}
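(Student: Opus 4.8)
The plan is to reduce the geometric statement to the algebra of $\pi_1(N)$, combining the van~Kampen description of the fundamental group of a connected sum with Corollary~\ref{freeprodcoro}. I work in the main case of ambient dimension $n\geq 3$, where $N=N_1\# N_2$ yields $\pi_1(N)\cong\pi_1(N_1)\star\pi_1(N_2)$ (the gluing sphere $S^{n-1}$ is simply connected, and removing an open ball does not change $\pi_1$). The degenerate low dimensions can be dispatched separately: in dimension $2$ a rationally essential summand is aspherical of genus $\geq 1$, a non-trivial connected sum then has genus $\geq 2$ and is a hyperbolic surface whose fundamental group is not presentable by a product, so --- by the first step below --- it cannot be dominated by a product and the hypothesis is vacuous, while the torus admits no non-trivial decomposition.

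The first and decisive step is to show that $\pi_1(N)$ is itself presentable by a product; this is where rational essentialness and the domination $P=X_1\times X_2\geq N$ enter, and it is essentially the content of~\cite{KL}. A map $f\colon X_1\times X_2\to N$ of non-zero degree induces $f_*\colon\pi_1(X_1)\times\pi_1(X_2)\to\pi_1(N)$ with image of finite index, by the standard covering argument: an infinite-index image would let $f$ lift to a non-compact cover of $N$, forcing $\deg f=0$. Setting $\Gamma_i:=f_*(\pi_1(X_i))$, these commuting subgroups generate a finite index subgroup, so the multiplication map $\Gamma_1\times\Gamma_2\to\pi_1(N)$ has finite index image, and it remains only to see that both $\Gamma_i$ are infinite. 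If, say, $\Gamma_2$ were finite, then asphericity of $B\pi_1(N)$ makes the composite $c\circ f\colon X_1\times X_2\to B\pi_1(N)$ of $f$ with the classifying map factor up to homotopy through $B\Gamma_1\times B\Gamma_2$; on fundamental classes the K\"unneth theorem then places $(c\circ f)_*[X_1\times X_2]$ in the image of $H_{d_1}(B\Gamma_1;\Q)\otimes H_{d_2}(B\Gamma_2;\Q)$ with $d_i=\dim X_i\geq 1$, and this vanishes since $H_{d_2}(B\Gamma_2;\Q)=0$ for the finite group $\Gamma_2$. This contradicts $(c\circ f)_*[X_1\times X_2]=\deg(f)\cdot c_*[N]\neq 0$, so both $\Gamma_i$ are infinite and $\pi_1(N)$ is presentable by a product.

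It remains to feed the free product structure into Corollary~\ref{freeprodcoro}. Since $\pi_1(N)\cong\pi_1(N_1)\star\pi_1(N_2)$ is presentable by a product, if both factors were non-trivial the corollary would force $\pi_1(N_1)\cong\Z/2\cong\pi_1(N_2)$, so that $\pi_1(N)\cong\Z/2\star\Z/2$ would be the infinite dihedral group. That group is virtually infinite cyclic, with $H_k(B\pi_1(N);\Q)=0$ for all $k\geq 1$: the extension with kernel the index-two infinite cyclic subgroup reduces the rational homology to the $\Z/2$-coinvariants of $H_*(\Z;\Q)$, which vanish in positive degrees because the non-trivial element acts by $-1$ on $H_1(\Z;\Q)$. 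This contradicts the rational essentialness of $N$. Hence at least one factor, say $\pi_1(N_2)$, is trivial, so $N_2$ is simply connected; and then $\pi_1(N_1)\cong\pi_1(N)$ is presentable by a product, as required.

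I expect the main obstacle to be exactly the first step: upgrading the topological input ``$P\geq N$ and $N$ rationally essential'' to the algebraic conclusion that \emph{both} images $\Gamma_i$ are infinite. The finite-index statement is routine, but excluding a finite factor genuinely uses essentialness through the classifying-space and K\"unneth computation above --- this is the precise point at which the notion of presentability by products was built to interface with domination, and in practice it is cleanest to cite the corresponding result of~\cite{KL}. Granting that step, the remainder is a formal application of Corollary~\ref{freeprodcoro} together with the vanishing of rational homology of virtually cyclic groups.
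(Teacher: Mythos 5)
Your proof is correct and follows essentially the same route as the paper: reduce to dimension $\geq 3$, identify $\pi_1(N)$ with $\pi_1(N_1)\star\pi_1(N_2)$ via van Kampen, invoke \cite[Theorem~1.4]{KL} (which you re-prove rather than simply cite) to conclude that $\pi_1(N)$ is presentable by a product, and feed this into Corollary~\ref{freeprodcoro}. The only difference is cosmetic ordering: the paper uses rational essentialness upfront to see that one free factor must be infinite, whereas you apply the corollary first and then exclude $\Z/2\star\Z/2$ by checking that the infinite dihedral group has vanishing rational homology in positive degrees --- the same fact deployed at a different point.
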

\begin{proof}
Clearly we may assume that $N$ has dimension $\geq 3$. Then its fundamental group is the free
product of the fundamental groups of the $N_i$, and, since $N$ is assumed rationally essential, at least one
of these free factors must be infinite. If the other free factor is non-trivial, Corollary~\ref{freeprodcoro} tells us that 
$\pi_1(N)$ is not presentable by a product, which contradicts~\cite[Theorem~1.4]{KL}. Thus one of the $N_i$ 
is simply connected, the other one is rationally essential, and its fundamental group is presentable by a product
by~\cite[Theorem~1.4]{KL}. 
\end{proof}

\begin{ex}
  In every dimension $n\geq 2$, the $n$-torus $T^n$ is a product, but
  $T^n\# T^n$ is not dominated by a product.
\end{ex}

\begin{rem}
  Notice however, that not all non-trivial connected sums are not
  dominated by a product; for instance, $\C P^2 \# \C P^2$ is
  dominated by a product~\cite[Proposition~7.1]{KL}. 
\end{rem}

%%%%%%%%%%%%%%%%%%%%%%%%%%%%%%%%%%%%%%%%%%%%%%%%%%%%%%%%%%%%%%%%%%%
\section{Final remarks}\label{s:final}

\subsection{Extension to subnormal subgroups}\label{ss:normal}

In this paper we have proved that various groups are not presentable by products. By definition, this notion refers to all
subgroups of finite index, in particular the finite index normal subgroups. It turns out that in many
cases one can treat all infinite normal subgroups of our groups, regardless of whether they have finite 
index, or not. This leads to the following result:

\begin{thm}\label{t:subnormal}
Let $\Gamma$ be a group from the following list of examples:
\begin{enumerate}[\normalfont{(MCG)}]
\item[\normalfont{(H)}] hyperbolic groups that are not virtually
  cyclic,
\item[\normalfont{(N-P)}] fundamental groups of closed Riemannian
  manifolds of non-positive sectional curvature of rank one and of
  dimension~$\geq 2$,
\item[\normalfont{(LAT)}] irreducible lattices in connected semisimple Lie groups with finite centre 
and rank~$\geq 2$,
\item[\normalfont{(MCG)}] mapping class groups of closed oriented
  surfaces of genus $\geq 1$,
\item[\normalfont{(OUT)}] outer automorphism groups of free groups of rank $\geq 2$,
\item[\normalfont{(END)}] groups with infinitely many ends.
\end{enumerate}
Then no infinite subnormal subgroup of $\Gamma$ is presentable by a product.
\end{thm}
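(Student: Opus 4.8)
The plan is to deduce the statement for subnormal subgroups from a statement about \emph{normal} subgroups by induction, and then to prove the latter by separating the two structurally different families on the list: the higher-rank lattices (LAT), where the Margulis normal subgroup theorem is available, and the remaining ``negatively curved'' families (H), (N-P), (MCG), (OUT), (END), which I would treat by a single centraliser argument in the spirit of the proofs of Propositions~\ref{prop:acentral}, \ref{p:free} and~\ref{p:ends}. To set up the induction, note that if $H = H_0 \triangleleft H_1 \triangleleft \dots \triangleleft H_k = \Gamma$ is a subnormal series with $H$ infinite, then every $H_i$ is infinite, since it contains $H$. The induction will be fed by a \emph{closure property}: for each family I will exhibit a class $\mathcal X$ of finitely generated groups such that (i) every group on the list lies in $\mathcal X$; (ii) every infinite normal subgroup of a group in $\mathcal X$ again lies in $\mathcal X$; and (iii) no infinite member of $\mathcal X$ is presentable by a product. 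Walking down the subnormal series using (ii) then places $H$ in $\mathcal X$, and (iii) finishes the proof. Thus the whole problem is the construction and verification of such an $\mathcal X$.

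For (LAT) the class $\mathcal X$ is ``finite-index subgroups of irreducible lattices in connected semisimple Lie groups with finite centre and rank $\geq 2$''. Property (iii) is Proposition~\ref{p:SL}, and property (ii) is immediate from the Margulis normal subgroup theorem: an infinite normal subgroup of such a lattice has finite index, hence is again such a lattice. For (END), where I restrict to finitely generated subnormal subgroups in accordance with our standing convention, I would take $\mathcal X$ to be the class of finitely generated groups with infinitely many ends; property (iii) is Proposition~\ref{p:ends}, and property (ii) follows from the standard fact in the theory of ends that a finitely generated infinite normal subgroup of a finitely generated group is of finite index whenever the ambient group has more than one end, together with the invariance of the number of ends under passage to finite-index subgroups.

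For the four remaining families the class $\mathcal X$ will consist of non-elementary groups $G$ that admit a non-elementary action on a Gromov-hyperbolic space containing a ``generic'' element (loxodromic, rank-one, pseudo-Anosov, or fully irreducible, according to the family) whose centraliser in $G$ is virtually cyclic; equivalently, acylindrically hyperbolic groups in which the generic loxodromic elements have virtually cyclic centralisers. Property (iii) is then a clean uniform argument: if $G \in \mathcal X$ were presentable by a product via commuting infinite subgroups $\Gamma_1, \Gamma_2$ with $\Gamma_1\Gamma_2$ of finite index, choose a generic element $g \in \Gamma_1\Gamma_2$ (a suitable power of a generic element lies in the finite-index subgroup $\Gamma_1\Gamma_2$ and is again generic) and write $g = g_1 g_2$. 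Every element of $\Gamma_1$ commutes with $g_1$ and with $g_2$, and likewise for $\Gamma_2$, so every element of $\Gamma_1 \cup \Gamma_2$ commutes with $g$; hence $g$ is central in $\Gamma_1\Gamma_2$. But then $\Gamma_1\Gamma_2 \subseteq C_G(g)$, which is virtually cyclic, so $G$ itself is virtually cyclic, contradicting non-elementariness.

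The hard part will be property (ii) for these four families, namely that an infinite normal subgroup $N$ of a group $G \in \mathcal X$ is again non-elementary and again contains a generic element with virtually cyclic centraliser. The virtual cyclicity is automatic, since $C_N(g) = C_G(g) \cap N$ is a subgroup of a virtually cyclic group; the content is the persistence of generic elements and of non-elementariness. Here I would invoke the theory of acylindrically hyperbolic groups: a non-elementary group with a generic element of virtually cyclic centraliser is acylindrically hyperbolic, and by the structure theory of such groups every infinite normal subgroup contains a loxodromic element for the ambient action and is again non-elementary; since this action is the restriction of that of $G$, the new generic element is loxodromic for $G$ as well, so its $G$-centraliser is virtually cyclic, giving (ii). That the groups on the list lie in $\mathcal X$ rests on well-documented facts: non-elementary hyperbolic groups and the virtual cyclicity of loxodromic centralisers for (H); acylindrical hyperbolicity of rank-one $\CAT(0)$ groups together with a Preissmann-type analysis of rank-one centralisers for (N-P); the action on the curve complex and the virtual cyclicity of pseudo-Anosov centralisers for (MCG); and the action on the free factor complex together with Lustig's theorem on centralisers of fully irreducible elements for (OUT). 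Assembling these structural inputs uniformly, and checking the elementary-closure step that prevents an infinite normal subgroup from degenerating to a virtually cyclic group, is the main obstacle; the remaining bookkeeping is a direct reprise of the centraliser arguments already carried out in Sections~\ref{s:acentral} and~\ref{s:free}.
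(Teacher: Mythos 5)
Your overall architecture---induction down the subnormal chain, driven by a class $\mathcal X$ that contains the listed groups, is closed under passing to infinite normal subgroups, and obstructs presentability---is exactly the paper's (which takes $\mathcal X=\Creg$ and Monod--Shalom's closure theorem \cite[Proposition~7.4]{MSannals}, plus the identical Margulis argument for (LAT)). However, two of your steps have genuine problems. The first is the centraliser argument for your property (iii): from $g=g_1g_2$ you assert that \emph{every} element of $\Gamma_1$ commutes with $g_1$ and with $g_2$, and conclude that $g$ is central in $\Gamma_1\Gamma_2$. Commutation is only guaranteed between elements of $\Gamma_1$ and elements of $\Gamma_2$; an element $h\in\Gamma_1$ commutes with $g_2$ but has no reason to commute with $g_1$ unless $\Gamma_1$ is abelian. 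If your deduction were valid it would apply to every $g\in\Gamma_1\Gamma_2$ and would show that any group presentable by a product is virtually abelian, which is absurd (consider $F_2\times F_2$). What is actually available is only $g_1,g_2\in C_G(g)$ together with $\Gamma_1\subset C_G(g_2)$ and $\Gamma_2\subset C_G(g_1)$, and since $g_1,g_2$ need not themselves be generic, these centralisers need not be virtually cyclic. The step can be repaired, but it needs the extra idea of Proposition~\ref{p:free}: choose a non-trivial $g_2'\in\Gamma_2\cap C_G(g)$, observe that $g_1$ and $g_2'$ lie in the virtually cyclic group $C_G(g)$ and (after arranging infinite order, e.g.\ by passing to a torsion-free finite-index subgroup) have common non-trivial powers, so that $\Gamma_1\cap\Gamma_2$ is infinite and the centre of $\Gamma_1\Gamma_2$ is infinite by Lemma~\ref{l:center}, a contradiction; alternatively, use the normal-subgroup theory you already invoke for (ii) to find a loxodromic $h_i$ in each $\Gamma_i$, whence $\Gamma_2\subset C_G(h_1)$ and $\Gamma_1\subset C_G(h_2)$ are virtually cyclic and $\Gamma_1\Gamma_2$ is virtually abelian.

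The second problem is that your treatment of (END) does not survive the induction. Your closure property holds only for \emph{finitely generated} normal subgroups, but the intermediate groups of a subnormal chain are given to you, not chosen by you, and they need not be finitely generated even when both ends of the chain are: in $H_0\lhd[F_2,F_2]\lhd F_2$ the middle group is free of infinite rank, so your class is silent already at the first step. This is precisely why the paper uses $\Creg$ here: Monod--Shalom's closure theorem requires no finiteness hypothesis on the normal subgroup, and groups with infinitely many ends lie in $\Creg$ by \cite[Corollary~7.9]{MSjdg}. If you want to stay within your geometric framework, the cleaner fix is to fold (END) into your acylindrical class: by Stallings' theorem such a group splits over a finite subgroup, and the action on the associated Bass--Serre tree is acylindrical with virtually cyclic centralisers of loxodromic elements. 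Modulo these two repairs, your route (acylindrical hyperbolicity plus the persistence of that property in infinite normal subgroups) is a genuinely different and more geometric alternative to the paper's bounded-cohomology argument, though it relies on machinery substantially heavier than, and developed later than, the single closure result the paper quotes.
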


Recall that a subgroup $\Gamma_0\subset \Gamma$ is \emph{subnormal} if there is a descending sequence of subgroups
$\Gamma_0\subset\Gamma_1\subset\ldots\subset\Gamma_k\subset\Gamma_{k+1}=\Gamma$ such that
$\Gamma_i$ is normal in $\Gamma_{i+1}$ for all $i \in\{0, \dots, k\}$.

In order to give a quick and uniform proof for almost all the different cases we use the fact that all the groups 
in the theorem, except the lattices in (LAT), are in $\Creg$; compare the survey table in the Appendix. It was proved by Monod and 
Shalom~\cite[Proposition~7.4]{MSannals} that if $\Gamma$ is in $\Creg$, then so is every infinite normal
subgroup. Theorem~\ref{t:subnormal} then follows from Proposition~\ref{p:Creg} by induction on the length of 
the chain of subnormal subgroups. In the case of the lattices in (LAT), the Margulis normal subgroup theorem~\cite[Chapter~IV]{margulis},
\cite[Chapter~8]{zimmer} implies that every infinite subnormal subgroup has finite index. The conclusion then follows from Proposition~\ref{p:SL}.

The case (N-P) in the Theorem can be generalized further by considering \Catz-groups in the sense of~\cite{bh}. 
Let $\Gamma$ be any discrete group that admits a proper, 
minimal, isometric action without fixed points at infinity on a proper,
  irreducible \Catz-space~$X$ with finite-dimensional boundary. If $X$
  is not the real line, then no infinite subnormal subgroup of $\Gamma$ is presentable by products. This is 
  implicit in a result of Caprace and Monod~\cite[Theorem~1.10]{cmstructure}.
  
To put this extension into context, recall that an action of a group on a \Catz-space is \emph{minimal}
if this space does not contain a non-empty invariant closed
convex (proper) subspace. As in the Riemannian case, a \Catz-space is
\emph{irreducible} if it does not admit a non-trivial isometric
splitting as a direct product.
If a discrete group acts cocompactly via isometries on a proper
\Catz-space~$X$, then the boundary of~$X$ is automatically
finite-dimensional~\cite[Theorem~C]{kleiner}. Moreover, if a discrete
group acts properly discontinuously, minimally, and cocompactly via
isometries on a \Catz-space without Euclidean factors, then this
action does not have any fixed points at
infinity~\cite[Corollary~2.7]{ab}.

\subsection{Relations with geometric and with measurable group theory}

The property of being or not being presentable by a product is not
always shared by groups that are equivalent under one of the usual
equivalence relations considered in geometric group theory.

\begin{thm}\label{t:non-invariance}
  The property of being presentable by products is not invariant under
  quasi-isometries, under measure equivalence, or under orbit
  equivalence.
\end{thm}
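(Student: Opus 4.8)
The plan is to prove each of the three non-invariance assertions by exhibiting a pair of groups that are equivalent under the relation in question but of which exactly one member is presentable by a product. For measure equivalence and orbit equivalence the \emph{same} pair does the job, and the crucial input is the Ornstein--Weiss theorem: all infinite amenable groups lie in a single measure equivalence class, and moreover any two of them admit orbit equivalent free ergodic measure-preserving actions, so they are also orbit equivalent. I would therefore take the pair $\Z^2$ and $\Gamma = \Z^2 \rtimes_\alpha \Z$, with $\alpha$ the Anosov action of Example~\ref{ex:amenable}. Both groups are infinite and (being solvable) amenable, hence measure equivalent and orbit equivalent to each other. On the other hand $\Z^2$ is literally a product of two infinite groups, hence presentable by a product, whereas $\Gamma$ is not presentable by a product by Example~\ref{ex:amenable}. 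This settles the measure- and orbit-equivalence cases.

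For quasi-isometry the amenable strategy cannot be reused, since $\Z^2$ has polynomial and $\Gamma$ exponential growth, so the two are not quasi-isometric; a genuinely new pair is needed. I would pass to lattices in products of trees. On the one hand, a product of free groups $F_k \times F_l$ with $k,l \geq 2$ acts freely, properly, and cocompactly by isometries on the product $T \times T'$ of the Cayley trees of its factors, and is presentable by a product because it is a product of infinite groups. On the other hand, Burger and Mozes constructed finitely presented infinite simple groups $\Lambda$ that are torsion-free cocompact lattices acting freely on a product of two regular trees (arising as fundamental groups of finite aspherical square complexes whose universal cover is such a product). I would fix one such $\Lambda$.

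The heart of the argument, and the step I expect to be the main obstacle, is to see simultaneously that $\Lambda$ and $F_k \times F_l$ are quasi-isometric and that they differ in presentability. For the quasi-isometry: any two regular (or biregular) trees of valence $\geq 3$ are quasi-isometric, and a product of quasi-isometries is a quasi-isometry of the products taken with the $\ell^1$ metric; hence the two tree products on which $\Lambda$ and $F_k \times F_l$ act are quasi-isometric. By the Milnor--\v{S}varc lemma each group is quasi-isometric to its own tree product, so $\Lambda$ and $F_k \times F_l$ are quasi-isometric. For presentability: $\Lambda$ is not presentable by a product because it is infinite and simple. Indeed, if $\varphi\colon \Gamma_1 \times \Gamma_2 \to \Lambda$ were onto a finite index subgroup, then, the only finite index subgroup of the simple group $\Lambda$ being $\Lambda$ itself, the images $\varphi(\Gamma_1)$ and $\varphi(\Gamma_2)$ would be commuting normal subgroups generating $\Lambda$, each therefore trivial or all of $\Lambda$; since $\Lambda$ is nonabelian, both cannot be all of $\Lambda$, so one factor has finite image. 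This completes the quasi-isometry case, and with it the proof.

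The remaining work is bookkeeping: matching the valences of the trees, which is harmless since all trees of valence $\geq 3$ are quasi-isometric, and invoking the precise Burger--Mozes existence statement for a torsion-free simple lattice acting freely and cocompactly on a product of two trees. The one genuinely non-elementary ingredient is the existence of such a simple lattice, without which no quasi-isometric non-product companion to $F_k \times F_l$ is available by these methods.
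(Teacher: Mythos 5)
Your proposal is correct. For measure equivalence and orbit equivalence you argue exactly as the paper does: both use the Ornstein--Weiss theorem to place all infinite amenable groups in a single orbit/measure equivalence class, and then contrast a free Abelian group (presentable by a product) with the solvable group $\Z^2 \rtimes_\alpha \Z$ of Example~\ref{ex:amenable} (not presentable by a product). For quasi-isometry, however, you take a genuinely different route. The paper contrasts reducible and irreducible cocompact lattices in $\Isom(\Hyp^2\times\Hyp^2)$: all such lattices are quasi-isometric to $\Hyp^2\times\Hyp^2$ by Milnor--\v{S}varc, the reducible ones are trivially presentable by products, and the irreducible ones are not, by~\cite[Corollary~4.2]{KL} --- a statement whose proof ultimately rests on rigidity theory for irreducible lattices. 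You instead contrast $F_k\times F_l$ with a Burger--Mozes simple cocompact lattice $\Lambda$ in a product of two trees; the quasi-isometry between them follows from Milnor--\v{S}varc plus the standard facts that all regular trees of valence at least~$3$ are quasi-isometric and that products of quasi-isometries are quasi-isometries, and the non-presentability of $\Lambda$ is completely elementary (an infinite simple group has no proper finite-index subgroups, and commuting subgroups generating a simple group are normal, hence trivial or everything --- the same observation the paper itself makes for Thompson's groups $T$ and $V$ and for Higman--Thompson groups). The trade-off: your verification of non-presentability avoids any appeal to lattice rigidity, but it outsources the existence of the example to the deep Burger--Mozes construction, whereas the paper's irreducible lattices in $\PSL(2,\R)\times\PSL(2,\R)$ are classical (e.g.\ arising from quaternion algebras). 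Both arguments are complete and correct.
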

\begin{proof}
  The isometry group of the polydisk~$\Hyp^2 \times \Hyp^2$ contains
  both reducible and irreducible cocompact lattices. The reducible
  ones are trivially presentable by products, whereas the irreducible
  ones are not presentable by
  products~\cite[Corollary~4.2]{KL}. However, all these lattices are
  quasi-isometric to each other by the Milnor--\v Svarc lemma.  This
  shows that presentability by products is not a quasi-isometry
  invariant property.

  All infinite amenable groups admit orbit equivalent measure
  preserving free actions on standard Borel probability
  spaces~\cite{ornsteinweiss}. Obviously, there are many amenable
  groups that are presentable by a product, for instance free Abelian
  groups of non-zero rank. However, there are also amenable groups
  that are not presentable by products; see
  Example~\ref{ex:amenable}. Thus, presentability by products is not
  invariant under orbit equivalence.

  The examples mentioned in the previous paragraph also show that
  presentability by products is not invariant under measure
  equivalences (the class of groups that are measure equivalent
  to~$\Z$ equals the class of all infinite countable amenable
  groups~\cite{furman}).
\end{proof}

In spite of Theorem~\ref{t:non-invariance}, many of the obstructions
against presentability by products that we have discussed in this
paper have strong invariance properties under these equivalence
relations.  For example, the non-vanishing of the first $L^2$-Betti
number is a quasi-isometry invariant~\cite[p.~19 and~224]{gromovasym},
%\cite[Proposition~1 and Th\'eor\`eme~1]{pansu}
\cite[p.~314]{BV}. 
Moreover, Gaboriau proved that the
vanishing of the first $L^2$-Betti number is an orbit equivalence
invariant~\cite[Th\'eor\`eme~3.12]{gaboriauIHES} %\cite[Theorem~3.37]{sauer}
and a measure equivalence
invariant~\cite[Th\'eor\`eme~6.3]{gaboriauIHES}. %\cite[Theorem~3.38]{sauer}.
Next, being expensive is an orbit
equivalence invariant and a measure equivalence invariant for groups
with fixed price as the cost of a group is defined in terms of its
orbit relations~\cite[Proposition~VI.5,~VI.6]{gaboriauInvent}.
Finally, the non-vanishing of the
second bounded cohomology with coefficients in the regular
representation is a measure equivalence
invariant~\cite[Corollary~7.6]{MSannals}.  
Whether it is invariant under quasi-isometries seems to be unknown; see Monod's 2006 ICM talk~\cite[Problem~J]{monodICM}.
 
%\clcomm{How does the rank gradient behave with respect to QI, OE, ME?}

%%%%%%%%%%%%%%%%%%%%%%%%%%%%%%%%%%%%%%%%%%%%%%%%%%%%%%%%%%%%%%%%%%%
\section*{Appendix: Overview of results}

Table~\ref{overviewtable} surveys the applicability of different
criteria to proving that certain classes of groups are not presentable
by products. The first column lists certain test classes of groups; of
the other columns each corresponds to a way of concluding that groups
are not presentable by products. The ``ad hoc'' column refers to the
direct, low-tech, hands-on argument relying on information about the
sizes of centralisers, including in particular the arguments about 
acentral extensions of Section~\ref{s:acentral}. The other columns each use some high-brow theory.

\newlength{\saveparindent}
\setlength{\saveparindent}{\parindent}

\begin{table}
\begin{minipage}{\linewidth}
  \newcommand{\tfootnotemark}[1]{\,{}$^{#1}$}
  \newcommand{\tfootnotetext}[2]{\makebox[\the\saveparindent][r]{{}$^{#1}$\,} & #2\\}
  \begin{center}
  {\footnotesize
  \setlength{\extrarowheight}{10pt}
  \begin{tabular}{l|ccccc} 
    properties of~$\Gamma$ & ad hoc & $\ltb 1 \Gamma > 0$ 
  & $\CC(\Gamma) > 1$ & $\Gamma\in\mathcal{P}$ & $\Gamma\in\Creg$ \\ \hline
    {hyperbolic (non-el.)} 
  & \cite[Prop.~3.6]{KL} 
  & --- & ---\tfootnotemark{1} 
  & ---\tfootnotemark{8} & \cite[Thm.~3]{MMS}  
  \\ %\hline
    {MCG in genus $\geq 3$}
  & \cite[Prop.~3.8]{KL} 
  & --- & ---\tfootnotemark{2} 
  & \cite[Thm.~2.2]{BridsonHarpe} & \cite[Thm.~4.5]{Ham}  
  \\ %\hline
    {$\Out(F_n)$ for $n\geq 3$} 
  & Prop.~\ref{p:free} 
  & --- & ---\tfootnotemark{3} 
  & \cite[Thm.~2.6]{BridsonHarpe} & \cite[Cor.]{Ham3} 
  \\ %\hline
    {$\vert e(\Gamma)\vert=\infty$}
  & Prop.~\ref{p:ends}
  & $\checkmark$\tfootnotemark{0} & $\checkmark$ 
  & ---\tfootnotemark{8} & \cite[Cor.~7.9]{MSjdg} 
  \\ %\hline
    {$\Delta_1\star\Delta_2$, $\vert\Delta_i\vert>i$ } 
  & Example~\ref{ex:free} 
  & $\checkmark$\tfootnotemark{0} & $\checkmark$ 
  & \cite[Prop.~8]{Harpe1} & \cite[Cor.~7.9]{MSjdg} 
  \\ %\hline
    {(N-P), rank~$1$ } 
  & \cite[Prop.~3.7]{KL} 
  &  --- & ---\tfootnotemark{1} 
  & ? & \cite[Thm.~2]{Ham2} 
  \\ %\hline
    {(N-P), irred., rk $\geq 2$} 
  & \cite[Thm.~4.1]{KL}
  & --- & ---\tfootnotemark{4} &  ? & ---\tfootnotemark{5}  
   \\ %\hline
    {$\GL(n,\Z)$, $n\geq 3$} 
  & Prop.~\ref{p:SL}
  & --- & ---\tfootnotemark{4} &  ---\tfootnotemark{11} & ---\tfootnotemark{5}  
 \\ %\hline
     {Thompson $F$} 
  & Prop.~\ref{p:thompson}
  & --- & ---\tfootnotemark{10} & ? & ?  
  \\ %\hline
     {$\Gamma\subset \Sol^3$ a lattice} 
  & Cor.~\ref{c:Sol}
  & --- & ---\tfootnotemark{6} & ---\tfootnotemark{9} & ---\tfootnotemark{7}  
  \\ %\hline
\end{tabular}
  }
  \end{center}

  \medskip

  \noindent 
  \rule{5em}{.4pt}

  \medskip

  \noindent
  {\footnotesize
  \begin{tabularx}{\linewidth}{r@{}X}
    \tfootnotetext{0}{well known}%\clcomm{we still could add refs \dots}} 
    \tfootnotetext{1}{See Example~\ref{ex:hyper}.}  
    \tfootnotetext{2}{See Proposition~\ref{p:MCG}.}
    \tfootnotetext{3}{See Proposition~\ref{p:Outcheap}.}  
    \tfootnotetext{4}{Lattices in higher rank Lie groups are
      cheap~\cite[Cor.VI.30]{gaboriauInvent}.}   
    \tfootnotetext{5}{Lattices in almost simple higher rank Lie groups are
      not in $\Creg$~\cite[Thm.~1.4]{MSjdg}.}
    \tfootnotetext{6}{See Lemma~\ref{l:lack} or~\cite[Prop.~35.1 (i)]{KM}.}
    \tfootnotetext{7}{The bounded cohomology of amenable groups vanishes.}
      \tfootnotetext{8}{See Example~\ref{ex:SL2}.}  
   \tfootnotetext{9}{A non-trivial amenable group cannot be $C^*$-simple; cf.~\cite{Harpe2}.}
   \tfootnotetext{10}{$F$ contains itself with finite index $>1$.}
 %  \tfootnotetext{11}{$\GL(n,\Z)$ has vanishing rank gradient; cf.~\cite{lack}.}
  \tfootnotetext{11}{$\GL(n,\Z)$ has non-trivial center, and so cannot be $C^*$-simple.}
\end{tabularx}}
\end{minipage}

\bigskip

\caption{Overview of results}\label{overviewtable}
\end{table}

The first test class of groups are the non-elementary hyperbolic
groups, denoted (H) in~\cite{KL}. Non-elementary is the same thing as
not virtually cyclic.

The second class are the mapping class groups of closed oriented
surfaces of genus at least~$3$. We omit genus $1$ and $2$
because they have special features that do not occur in high genus, e.g.,~they
have non-trivial centres.

The third class are the outer automorphism groups of free groups
$\Out(F_n)$, where we assume that $n\geq 3$. For $n=2$ one has
$\Out(F_2)=\GL(2;\Z)$.

The case of a free product $\Delta_1\star\Delta_2$ with
$\vert\Delta_i\vert>i$ is contained in the more general situation of a
group with infinitely many ends considered separately here. However,
not all criteria that apply to free products generalize to groups with
infinitely many ends.

By (N-P) we mean the class of fundamental groups of closed oriented
manifolds of non-positive curvature, as considered in~\cite{KL}.  The
results about this class can be extended to groups admitting suitable
actions on CAT(0)-spaces; see Subsection~\ref{ss:normal}.

For the Thompson group $F$ amenability seems to be an open question,
but it is certainly not elementary amenable.
The final example concerns the fundamental groups of $3$-manifolds
carrying the Thurston geometry $\Sol^3$. These are elementary 
amenable, and they show that none of the
high-tech obstructions against presentability by products apply to 
arbitrary acentral extensions.

Where a criterion does work for a class of groups, the corresponding
entry in the table gives the earliest reference for a complete proof
known to us. A horizontal line indicates that the criterion is not
applicable; this is explained in the footnotes.

The checkmarks in the cost column come from Theorem~\ref{t:gab}. If a
group can be shown not be presentable by products using the first
$L^2$-Betti number, then one can also use the cost for this
purpose. Conversely, if a group is cheap, then its first $L^2$-Betti
number vanishes, and this explains the horizontal lines without
footnotes in the $L^2$-Betti number column.

%\clearpage 

%%%%%%%%%%%%%%%%%%%%%%%%%%%%%%%%%%%%%%%%%%%%%%%%%%%%%%%%%%%%%%%%%%%
\bibliographystyle{amsplain}

\bigskip

\end{document}